\newtheorem{theorem}{Theorem}[section]
\newtheorem{lemma}[theorem]{Lemma}
\newtheorem{proposition}[theorem]{Proposition}
\newtheorem{question}[theorem]{Question}
\newtheorem{corollary}[theorem]{Corollary}
\theoremstyle{definition} 
\newtheorem{definition}[theorem]{Definition}
\newtheorem{definition-lemma}[theorem]{Definition-Lemma}
\newtheorem{example}[theorem]{Example}
\theoremstyle{remark}
\newtheorem{remark}[theorem]{Remark}
\numberwithin{equation}{section}
\newcommand{\nC}{\mathbf{C}}
\def\nP{\mathbf{P}}
\def\nT{\mathbf{T}}
\newcommand{\sB}{\mathscr{B}}
\newcommand{\sC}{\mathscr{C}}
\newcommand{\sO}{\mathscr{O}}
\newcommand{\sI}{\mathscr{I}}
\newcommand{\sE}{\mathscr{E}}
\def\Pic{\operatorname{Pic}}
\def\mult{\operatorname{mult}}
\def\Cone{\operatorname{Cone}}
\def\Bs{\operatorname{Bs}}
\def\Supp{\operatorname{Supp}}
\def\Sing{\operatorname{Sing}}
\def\length{\operatorname{length}}
\def\out{\operatorname{out}}
\def\inn{\operatorname{inn}}
\def\Ram{\operatorname{Ram}}
\newcommand*{\op}[1]{\operatorname{#1}}%
\renewcommand*{\emptyset}{\varnothing}
\newcommand*{\pd}{\partial}
\newcommand*{\xto}[1]{\xrightarrow{\,#1\,}}
\newcommand*{\da@rightarrow}{\mathchar"0\hexnumber@\symAMSa 4B }
\newcommand*{\da@leftarrow}{\mathchar"0\hexnumber@\symAMSa 4C }
\newcommand*{\xdashrightarrow}[2][]{%
  \mathrel{%
	\mathpalette{\da@xarrow{#1}{#2}{}\da@rightarrow{\,}{}}{}%
  }%
}
\newcommand{\xdashleftarrow}[2][]{%
  \mathrel{%
	\mathpalette{\da@xarrow{#1}{#2}\da@leftarrow{}{}{\,}}{}%
  }%
}
\newcommand*{\da@xarrow}[7]{%
  \sbox0{$\ifx#7\scriptstyle\scriptscriptstyle\else\scriptstyle\fi#5#1#6\m@th$}%
  \sbox2{$\ifx#7\scriptstyle\scriptscriptstyle\else\scriptstyle\fi#5#2#6\m@th$}%
  \sbox4{$#7\dabar@\m@th$}%
  \dimen@=\wd0 %
  \ifdim\wd2 >\dimen@
	\dimen@=\wd2 %
  \fi
  \count@=2 %
  \def\da@bars{\dabar@\dabar@}%
  \@whiledim\count@\wd4<\dimen@\do{%
	\advance\count@\@ne
	\expandafter\def\expandafter\da@bars\expandafter{%
	  \da@bars
	  \dabar@ 
	}%
  }%
  \mathrel{#3}%
  \mathrel{%
	\mathop{\da@bars}\limits
	\ifx\\#1\\%
	\else
	  _{\copy0}%
	\fi
	\ifx\\#2\\%
	\else
	  ^{\copy2}%
	\fi
  }%
  \mathrel{#4}%
}
\title[Positivity of double point divisors]
{Positivity of double point divisors}
\begin{document}

\author{Yonghwa Cho}
\address{Department of Mathematics, Gyeongsang National University, 501 Jinju-daero, Jinju 52828, Gyeongsangnam-do, Republic of Korea}
\email{yhcho@gnu.ac.kr}

\author{Jinhyung Park}
\address{Department of Mathematical Sciences, KAIST, 291 Daehak-ro, Yuseong-gu, Daejeon 34141, Republic of Korea}
\email{parkjh13@kaist.ac.kr}


\thanks{Y. Cho was supported by the research grant of the new professor of the Gyeongsang National University in 2024 (GNU-2024-240034). J. Park was partially supported by the National Research Foundation (NRF) funded by the Korea government (MSIT) (NRF-2019R1A6A1A10073887).} 

\subjclass[2020]{14N05, 14E25}
\date{\today}
\keywords{double point divisor, projection, positivity of a divisor, Segre locus}

\begin{abstract}
The non-isomorphic locus of a general projection from an embedded smooth projective variety to a hypersurface moves in a linear system of an effective divisor which we call the double point divisor. David Mumford proved that the double point divisor from outer projection is always base point free, and Bo Ilic proved that it is ample except for a Roth variety. The first aim of this paper is to show that the double point divisor from outer projection is very ample except in the Roth case. This answers a question of Bo Ilic. Unlike the case of outer projection, the double point divisor from inner projection may not be base point free nor ample. However, Atsushi Noma proved that it is semiample except when a variety is neither a Roth variety, a scroll over a curve, nor the second Veronese surface. In this paper, we investigate when the double point divisor from inner projection is base point free or big.
\end{abstract}

\maketitle

\section{Introduction}

Throughout the paper, we work over the field $\nC$ of complex numbers, $X \subseteq \nP^r$ is a non-degenerate smooth projective variety of dimension $n$, codimension $e$, and degree $d$, and $H$ is its hyperplane section. Here the non-degeneracy condition includes $X \neq \nP^r$. For a general $(e-1)$-dimensional linear subspace $\Lambda \subseteq \nP^r$, consider the projection
$$
\pi_{\Lambda, X} \colon X \longrightarrow \nP^n
$$
centered at $\Lambda$.
The ramification locus $\Ram(\pi_{\Lambda, X})$ of $\pi_{\Lambda, X}$ is an effective divisor linearly equivalent to $K_X+(n+1)H$. Mumford proved that $K_X+(n+1)H$ is base point free (\cite[Technical Appendix 4]{BM}), and Ein proved that $K_X + nH$ is base point free (\cite[Lemma 2]{E}). By \cite[Theorem 1.4]{Io1}, $K_X + (n-1)H$ is not base point free if and only if $X=\nP^1$ or $(X, \sO_X(1)) = (Q^n, \sO_{Q^n}(1)), (\nP(\sE), \sO_{\nP(\sE)}(1)), (\nP^2, \sO_{\nP^2}(2))$, where $Q^n \subseteq \nP^{n+1}$ is a quadric hypersurface and $\sE$ is a very ample vector bundle of rank $\geq 2$ on a smooth projective curve. Suppose for a moment that $K_X+(n-1)H$ is base point free. The \emph{adjunction mapping} $\varphi \colon X \to B$ is a surjective morphism given by the linear system $\lvert K_X+(n-1)H \rvert$. A basic theorem in adjunction theory (cf. \cite[Proposition 1.11]{Io1}) states that $\dim B = n$ (in particular, $K_X + (n-1)H$ is big) except in the following cases:
\begin{enumerate}
    \item $\dim B = 0$: $X$ is a del Pezzo manifold. In this case, $K_X+(n-1)H=0$.
    \item $\dim B = 1$: $\varphi$ is a hyperquadric fibration over a smooth projective curve $B$.
    \item $\dim B = 2$: $\varphi$ is a linear fibration over a smooth projective surface $B$.
\end{enumerate}

\medskip

From now on, we assume that $e \geq 2$. For an $(e-2)$-dimensional linear subspace $\Lambda \subseteq \nP^r$ with $X \cap \Lambda = \emptyset$, consider the outer projection
$$
\pi_{\Lambda, X} \colon X \longrightarrow \overline{X} \subseteq \nP^{n+1}
$$
centered at $\Lambda$. Note that $\pi_{\Lambda, X}$ is a finite birational morphism for a general choice of $\Lambda$. The non-isomorphic locus $D_{\out}(\pi_{\Lambda, X})$ of $\pi_{\Lambda, X}$ is an effective divisor linearly equivalent to the \emph{double point divisor from outer projection}
$$
D_{\out}:=-K_X + (d-n-2)H.
$$
It is an interesting problem to study the positivity of double point divisors as for ramification divisors. Mumford proved that $D_{\out}$ is base point free (\cite[Technical Appendix 4]{BM}), and Ilic proved that $D_{\out}$ is ample if and only if $X \subseteq \nP^r$ is not a Roth variety (\cite[Theorem 4.2]{Il}).\footnote{In \cite{Il}, the linearly normality condition is added in the definition of a Roth variety, but in this paper, this condition is dropped.} 

\medskip

In \cite[Remark 5.8]{Il}, Ilic asks whether $D_{\out}$ is very ample unless $X \subseteq \nP^r$ is a Roth variety. A member $D$ of $\lvert D_{\out} \rvert$ is called a \emph{geometric divisor} if $D = D_{\out}(\pi_{\Lambda, X})$ for some $(e-2)$-dimensional linear subspace $\Lambda \subseteq \nP^r$ such that the projection $\pi_{\Lambda, X}$ is finite and birational. Let $S_{\out}$ be the set of the sections $s \in H^0(X, D_{\out})$ such that $\operatorname{div}(s)$ is a geometric divisor (such a section $s$ is said to be \emph{geometric}), and $V_{\out}$ be the subspace of $H^0(X, D_{\out})$ spanned by $S_{\out}$. Mumford actually proved that the linear system $\lvert V_{\out} \rvert$ is base point free by showing that for any $x \in X$, there exists $s \in S_{\out}$ such that $s(x) \neq 0$. Ilic precisely proved that when $X \subseteq \nP^r$ is not a Roth variety, $\lvert V_{\out} \rvert$ separates two distinct points of $X$ by showing that for any $x,y \in X$ with $x \neq y$, there exists $s \in S_{\out}$ such that $s(x)=0$ but $s(y) \neq 0$. Ilic's question is equivalent to the separation of the tangent directions by the sections in $H^0(X, D_{\out})$. As Ilic pointed out in \cite[Remark 5.8]{Il}, it would be more interesting to understand when it is possible with geometric sections in $S_{\out}$ or their linear combinations in $V_{\out}$.

\medskip

The first aim of this paper is to provide a complete answer to all aspects of Ilic's question. For this purpose, we need to consider the \emph{inner Segre locus}
$$
\mathscr{C}(X):=\{u \in X \mid \length(X \cap \langle u, x \rangle) \geq 3~\text{for general $x \in X$} \}.
$$
We refer to \cite{N0, N1, N2} for more basic properties of Segre loci.

\begin{theorem}\label{thm:D_outva}
Let $X \subseteq \nP^r$ be a non-degenerate smooth projective variety such that it is not a hypersurface. 
\begin{enumerate}
\item For a point $x \in X$, the set $S_{\out}$ of geometric sections separates tangent directions at $x$ if and only if $x \not\in \sC(X)$. In particular, if $\sC(X) = \emptyset$, then $S_{\out}$ separates all the tangent directions.
\item The subspace $V_{\out}$ of $H^0(X, D_{\out})$ spanned by geometric sections separates all the tangent directions. In particular, if $X \subseteq \nP^r$ is not a Roth variety, then the linear system $\lvert V_{\out} \rvert$ is very ample, and hence, the divisor $D_{\out}$ is very ample. 
\end{enumerate}
\end{theorem}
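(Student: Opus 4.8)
The plan is to deduce very ampleness of $D_{\out}$ from the same property for the subsystem $|V_{\out}|$, for which one must establish separation of points and separation of tangent directions. Point separation is exactly Ilic's theorem in the non-Roth case, so the entire burden is tangent separation, i.e.\ the content of (1) and (2). The logical skeleton I would follow is: analyze the infinitesimal structure of a geometric divisor $D_{\out}(\pi_{\Lambda,X})$ at a point $x$; use this to prove (1), showing that a \emph{single} geometric section can isolate a tangent direction at $x$ precisely when $x\notin\sC(X)$; then in (2) upgrade to linear combinations to cover the residual locus $\sC(X)$. Finally, since $V_{\out}\subseteq H^0(X,D_{\out})$, very ampleness of $|V_{\out}|$ forces very ampleness of $D_{\out}$.

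The first technical step is a local description of the double point divisor. For an $(e-2)$-plane $\Lambda$, the fiber of $\pi_{\Lambda,X}$ through $x$ is $X\cap\langle\Lambda,x\rangle$, and the multiplicity of $D:=D_{\out}(\pi_{\Lambda,X})$ at $x$ equals $\length(X\cap\langle\Lambda,x\rangle)-1$. When the fiber is $\{x,x_1,\dots,x_k\}$, the divisor $D$ is, analytically near $x$, a union of $k$ smooth branches, one for each $x_i$, and I would show that the branch attached to $x_i$ has embedded tangent space $\mathbb{T}_xX\cap\langle\Lambda,\mathbb{T}_{x_i}X\rangle$. Consequently, a tangent direction $v\in\mathbb{T}_xX$ with tangent line $\ell_v$ is \emph{not} contained in that branch exactly when $\ell_v\not\subseteq\langle\Lambda,\mathbb{T}_{x_i}X\rangle$. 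This converts every separation question into incidence of $\ell_v$ with the join $\langle\Lambda,\mathbb{T}_{x_i}X\rangle$, and it makes transparent that whenever the fiber has length $\ge 3$ the divisor acquires multiplicity $\ge 2$ at $x$, so $ds_x=0$ and a single such section separates nothing.

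For (1) with $x\notin\sC(X)$, a general secant line $\langle x,x'\rangle$ is an honest $2$-secant, so for general $\Lambda$ meeting it the divisor has multiplicity $1$ at $x$ with tangent hyperplane $\mathbb{T}_xX\cap\langle\Lambda,\mathbb{T}_{x'}X\rangle$. I would first fix $x'$ and vary $\Lambda$ through the $(e-2)$-planes meeting $\langle x,x'\rangle$: since these sweep all hyperplanes containing the $(n+1)$-plane $\langle x,\mathbb{T}_{x'}X\rangle$, the only directions never isolated for this $x'$ are those of $\tilde H_{x'}:=\mathbb{T}_xX\cap\langle x,\mathbb{T}_{x'}X\rangle$. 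Then varying $x'$ over general points of $X$, the condition $v\in\bigcap_{x'}\tilde H_{x'}$ reads $\pi_x(v)\in\bigcap_{x'}\pi_x(\mathbb{T}_{x'}X)$, i.e.\ $\pi_x(v)$ lies in every tangent space of the inner projection $\pi_x(X)$. The crux is to identify this vanishing with the Segre locus: using the structure theory of Segre loci, one shows that such a common direction exists if and only if the inner projection from $x$ is non-birational, that is, if and only if $x\in\sC(X)$. This yields both implications of (1); the converse direction is where the genuine input from \cite{N0,N1,N2} is needed, and is, I expect, the main obstacle in this part.

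For (2) I must separate tangent directions at points $x\in\sC(X)$, where every geometric divisor through $x$ is singular and hence no single geometric section helps. The key observation is that the linear part at $x$ of a linear combination $\sum a_i s_{\Lambda_i}$ that \emph{vanishes} at $x$ is independent of the local trivialization, so one may feed in geometric sections $s_\Lambda$ with $s_\Lambda(x)\neq 0$ (divisors missing $x$, which exist by Mumford's base point freeness) and extract cotangent vectors from their differences. Concretely, I would move $\Lambda$ in a one-parameter family $\Lambda_t$ with $x\notin D_{\Lambda_t}$ and consider the derivative $\dot s=\tfrac{d}{dt}s_{\Lambda_t}|_{0}\in V_{\out}$; the section $\dot s-\tfrac{\dot s(x)}{s_0(x)}s_0$ vanishes at $x$, and its linear part is read off from the first-order motion of the double point divisor. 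The goal is to show that, as the direction of $\dot\Lambda$ ranges over the Grassmannian tangent space, these linear parts exhaust $\mathbb{T}_x^\ast X$; here I would exploit the fact that $\sC(X)$ is a linear subspace of $X$ (again via Segre locus theory) to control the family of trisecant planes through $x$ and make the requisite jet map surjective. Granting this, $|V_{\out}|$ separates tangent directions everywhere, and together with Ilic's point separation in the non-Roth case it is very ample; as $V_{\out}$ is a subspace of $H^0(X,D_{\out})$, the divisor $D_{\out}$ is then very ample as well. The principal difficulty is this last jet-surjectivity at the Segre locus, the one place where passing from individual geometric sections to their span is genuinely necessary.
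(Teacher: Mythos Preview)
Your approach to (1) is broadly on the right track, but the logical structure is off in one place. You write that the emptiness of $\bigcap_{x'}\tilde H_{x'}$ is \emph{equivalent} to $x\notin\sC(X)$ and that this equivalence comes from Segre--locus structure theory. In fact the intersection is empty for \emph{every} $x$: the condition $\ell_v\subseteq\langle x,\nT_{x'}X\rangle$ is the same as $\ell_v\cap\nT_{x'}X\neq\emptyset$, and if this held for general $x'$ then the projection of $X$ from a general point of $\ell_v$ would be a strange variety, impossible in characteristic zero (this is exactly the content of the paper's preparatory lemma). The genuine role of $x\notin\sC(X)$ is different: it guarantees that after fixing a suitable $x'$ you can actually choose $\Lambda$ so that $X\cap\langle\Lambda,x\rangle=\{x,x'\}$ scheme-theoretically, a step you pass over with ``for general $\Lambda$ meeting it'' but which requires the cone-avoidance argument the paper carries out carefully. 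Your branch-counting observation for the forward direction ($x\in\sC(X)$ forces multiplicity $\ge 2$) is in the right spirit; the paper makes it rigorous via a curve section and a log-resolution computation.

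The real gap is (2). Your derivative-of-a-family argument has no content as written: you assert that the first-order variations of $s_{\Lambda_t}$ fill up $\nT_x^*X$ and that this is controlled by ``$\sC(X)$ being a linear subspace of $X$'', but for smooth non-Roth $X$ the set $\sC(X)$ is finite, and nothing in the Segre-locus literature hands you surjectivity of that jet map. The paper's proof shows why this part is genuinely hard and proceeds along entirely different lines. It does \emph{not} differentiate a family of centers; instead it uses Mumford's identity $D_{\out}(\pi_{\Lambda,X})+\Ram(\pi_{\Lambda',X})=V(F_{\Lambda,\Lambda'})\cap X$ to convert the problem into a statement about the tangent hyperplane of the explicit hypersurface $V(F_{\Lambda,\Lambda'})$, then reduces to a space curve $X\subseteq\nP^3$ via successive hyperplane sections and outer projections (checking that $F_{\Lambda,\Lambda'}$ behaves compatibly). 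For a fixed $T\in\nT_xX$ one introduces the plane $\mathcal L_{P,T}\subseteq\nP^3$ of points $Q$ with $T\in H_{P,\langle P,Q\rangle}$; the core computation is to rule out, by explicit Hessian and local-parametrization formulas, the symmetry $Q\in\mathcal L_{P,T}\Leftrightarrow P\in\mathcal L_{Q,T}$ for all admissible $P,Q$. The argument splits into cases according to whether $x$ is an inflection point of the curve and whether certain auxiliary planes are osculating, and each case requires real work. None of this structure is visible in your sketch, and I do not see how the derivative idea would bypass it.
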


Note that a subset $S$ of $H^0(X, D_{\out})$ separates tangent directions at $x \in X$ if and only if for any projective tangent line $L$ to $X$ at $x$, there is $s \in S$ such that $L \not\subseteq \nT_x \operatorname{div}(s)$. The proof of Theorem \ref{thm:D_outva} $(1)$ goes as follows. If a geometric divisor $D_{\out}(\pi_{\Lambda, X})$ passes through $x \in \sC(X)$, then $\length(\pi_{\Lambda,X}^{-1}(\pi_{\Lambda,X}(x))) \geq 3$. This yields $\mult_x D_{\out}(\pi_{\Lambda, X}) \geq 2$, so $\nT_x D_{\out}(\pi_{\Lambda, X}) = \nT_x X$. Conversely, if $x \not\in \sC(X)$, then for any projective tangent line $L$ to $X$ at $x$, we can construct a geometric divisor $D_{\out}(\pi_{\Lambda, X})$ such that $L \not\subseteq \nT_x D_{\out}(\pi_{\Lambda, X})$. Note that $\sC(X) = \emptyset$ in many cases such as $X \subseteq \nP^r$ is contained in Veronese embeddings or a scroll over a smooth projective curve (\cite[Corollary 3]{N0}, \cite[Theorem 1]{N4}). In this case, $\lvert V_{\out} \rvert$ separates all the tangent directions. If $\sC(X) \neq \emptyset$, then $X$ has a special structure (see \cite[Theorem 2]{N1}, \cite[Theorem 4]{N2}).

\medskip

The proof of Theorem \ref{thm:D_outva} $(2)$ is more involved. For a point $x \in \sC(X)$ and a projective tangent line $L$ to $X$ at $x$, we need to find $s_1, s_2 \in S_{\out}$ such that $s_1(x), s_2(x) \neq 0$ and $L \not\subseteq \nT_x \operatorname{div}(a_1s_1-a_2s_2)$ for some $a_1, a_2 \in \nC$ with $(a_1s_1 - a_2s_2)(x) = 0$. It is very hard to study the geometry of $\operatorname{div}(a_1s_1-a_2s_2)$ in general. Our strategy to overcome this difficulty is to use Mumford's observation in \cite[Technical Appendix 4]{BM}: if $\Lambda' \subseteq \nP^r$ is a general $(e-1)$-dimensional linear subspace and $\Lambda \subseteq \Lambda'$ is a general subspace of codimension $1$, then 
\begin{equation}\label{eq: Mumford identity, abstract form}
D_{\out}(\pi_{\Lambda, X}) + \Ram(\pi_{\Lambda', X}) = V(F_{\Lambda, \Lambda'}) \cap X,
\end{equation}
where $f_{\Lambda} \in \nC[Z_0, \ldots, Z_{n+1}]$ is a homogeneous polynomial with $\pi_{\Lambda,X}(X)=V(f_{\Lambda}) \subseteq \nP^{n+1}$ and
$F_{\Lambda, \Lambda'} = \sum_{i=0}^{n+1} q_i (\partial f_{\Lambda}/\partial Z_0)$ for $\pi_{\Lambda, \nP^r}(\Lambda')=[q_0, \ldots, q_{n+1}] \in \nP^{n+1}$ is regarded as a form on $\nP^r$. Let $\ell_{\Lambda, \Lambda'}:=\sum_{j=0}^{n+1} (\partial F_{\Lambda, \Lambda'}/\partial Z_j)(x) Z_j$
be a linear form on $\nP^r$. Then $\ell_{\Lambda, \Lambda'}(x)=0$ if and only if $x \in V(F_{\Lambda, \Lambda'})$ and $V(\ell_{\Lambda, \Lambda'}) = \nT_x V(F_{\Lambda, \Lambda'})$.
It suffices to show that there are general $(e-2)$-dimensional linear subspaces $\Lambda_1, \Lambda_2$ of $\Lambda'$ such that 
$V(\ell_{\Lambda_1, \Lambda'})$ and $V(\ell_{\Lambda_2, \Lambda'})$ meet $L$ at distinct points different from $x$. In this case, there are $a_1, a_2 \in \nC$ with $V(a_1 \ell_{\Lambda_1, \Lambda'}-a_2\ell_{\Lambda_2, \Lambda'}) \cap L = \{x\}$, i.e., $x \in V(a_1 F_{\Lambda_1, \Lambda'}-a_2 F_{\Lambda_2, \Lambda'})$ and $L \not\subseteq \nT_x V(a_1 F_{\Lambda_1, \Lambda'}-a_2 F_{\Lambda_2, \Lambda'})$. One can reduce the problem to the space curve case. We study the space $\mathcal{L}_{P, T}$ of points $Q \in \nP^3$ such that $\ell_{P , \langle P, Q \rangle}(T)=0$ for general $P \in \nP^3$ and $T \in L$. If $P' \in \mathcal{L}_{P,T}$ but $P \not\in \mathcal{L}_{P',T}$ for general $P, P' \in \nP^3$, then we are done. If not and $x$ is not an inflection point, then $\bigcup_{T \in \nT_x X} (\mathcal{L}_{P,T} \cap \mathcal{L}_{P',T})$ is the union of two planes $\langle P, P', x \rangle$ and $\Theta$. We then show that there is a general point $P'' \in \Theta$ such that $P \in \mathcal{L}_{P'',T}$ but $P'' \not\in \mathcal{L}_{P,T}$. The most difficult part is dealing with the case when $x$ is an inflection point or $\Theta$ is an osculating plane. This requires very technical and sophisticated calculations.

\medskip

For general $x_1, \ldots, x_{e-1} \in X$ and $\Lambda :=\langle x_1, \ldots, x_{e-1} \rangle \subseteq \nP^r$, consider the inner projection
$$
\pi_{\Lambda, X} \colon X \dashrightarrow \overline{X} \subseteq \nP^{n+1}
$$
centered at $\Lambda$. Noma \cite[Theorem 4]{N1} proved that the divisorial part $D_{\inn}(\pi_{\Lambda, X})$ of the non-isomorphic locus of $\pi_{\Lambda, X}$ is linearly equivalent to the \emph{double point divisor from inner projection}
$$
D_{\inn}:=-K_X + (d-n-e-1)H
$$
and $D_{\inn}$ is semiample except when $X \subseteq \nP^r$ is a Roth variety, a scroll over a smooth projective curve, or the second Veronese surface, which we call a \emph{Noma's exceptional variety}. 
More precisely, the base locus of the linear subsystem $\lvert D_{\inn} \rvert$ spanned by geometric divisors $D_{\inn}(\pi_{\Lambda, X})$ for general inner projections $\pi_{\Lambda, X} \colon X \dashrightarrow \overline{X} \subseteq \nP^{n+1}$ is the exactly same to $\sC(X)$, so in particular, $\Bs \lvert D_{\inn} \rvert \subseteq \sC(X)$ (\cite[Theorem 1]{N1}). As $\dim \sC(X) \leq 1$ and the equality holds if and only if $X \subseteq \nP^r$ is a Roth variety but not a rational scroll (\cite[Theorem 2]{N1}), $\Bs \lvert D_{\inn} \rvert$ is at most a finite set and thus $D_{\inn}$ is semiample when $X \subseteq \nP^r$ is not a Noma's exceptional variety.

\medskip

It is natural to ask when $D_{\inn}$ is base point free. This problem only makes sense if $\sC(X)$ is a nonempty finite set. For $x \in \sC(X)$, let $Y_x$ be the image of the inner projection $\pi_{x, X} \colon X \dashrightarrow \nP^{r-1}$ centered at $x$. Noma proved that if $Y$ is a rational normal scroll, then $D_{\inn}$ is base point free unless $Y = Q^2 \subseteq \nP^3$ is a quadric surface (\cite[Theorem 0.3]{N3}). See Example \ref{ex:Nomaexcsurf} for more details on the exceptional case. In this paper, we show that if $Y_x$ is smooth for every $x \in \sC(X)$, then $D_{\inn}$ is base point free unless $Y_x = Q^2 \subseteq \nP^3$ for some $x \in \sC(X)$.

\begin{theorem}\label{thm:bpf}
Let $X \subseteq \nP^r$ be a non-degenerate smooth projective variety of dimension $n \geq 2$, codimension $e \geq 2$ such that it is not a Noma's exceptional variety. Suppose that $\sC(X) \neq \emptyset$ and $Y:=\overline{\pi_{x,X}(X \setminus \{x\})}$ is smooth for a point $x \in \sC(X)$. Then $x \not\in \Bs \lvert D_{\inn} \rvert$ except when $Y=Q^2 \subseteq \nP^3$ is a quadric surface (cf. Example \ref{ex:Nomaexcsurf}).
\end{theorem}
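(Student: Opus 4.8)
The plan is to use the smoothness of $Y$ to realise a blow-up of $X$ as a finite cover of $Y$ and then transfer positivity from $Y$. Let $\sigma \colon \widetilde{X} \to X$ be the blow-up of $x$ with exceptional divisor $E \cong \nP^{n-1}$, and let $f \colon \widetilde{X} \to Y$ be the morphism resolving the projection $\pi_{x,X}$, so that $f^* \sO_Y(1) = \sigma^* H - E$. Since $x \in \sC(X)$, a general secant line through $x$ meets $X$ in length $\geq 3$, which forces $\deg f = (d-1)/\deg Y =: k \geq 2$ (the equality coming from $(\sigma^*H-E)^n = d-1$). As $f^*\sO_Y(1)|_E \cong \sO_{\nP^{n-1}}(1)$, the restriction $f|_E$ maps $E$ isomorphically onto a linear subspace $\Pi \cong \nP^{n-1}$ of $Y$; in particular $E$ is not contracted, and once the strict transforms of any lines of $X$ through $x$ are accounted for (via Stein factorisation, or shown not to occur), $f$ is a finite cover of the smooth $n$-fold $Y$ of degree $k \geq 2$.

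The first reduction is cohomological: since $\sigma^* \sO_X(D_{\inn})|_E \cong \sO_E$, showing $x \notin \Bs \lvert D_{\inn} \rvert$ is equivalent to producing a section of $\sigma^* D_{\inn}$ on $\widetilde{X}$ that does not vanish identically along $E$, i.e.\ to the surjectivity of $H^0(\widetilde{X}, \sigma^* D_{\inn}) \to H^0(E, \sO_E) = \nC$. Using $K_{\widetilde{X}} = \sigma^* K_X + (n-1)E = f^* K_Y + R$, where $R$ is the ramification divisor of $f$, together with $d = 1 + k \deg Y$, I would rewrite
$$
\sigma^* D_{\inn} \sim f^* A - R + (d-e-2)\,E, \qquad A := -K_Y + (d-n-e-1)\,H_Y,
$$
where $H_Y = \sO_Y(1)$. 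The key point is that $A$ carries a large amount of positivity on the \emph{smooth} variety $Y$: one computes $A \sim D_{\inn}(Y) + (k-1)(\deg Y)\,H_Y$ (and $A \sim (k-1)(\deg Y)\,H_Y$ when $e=2$, so $Y$ is a hypersurface), so $k \geq 2$ yields at least $(\deg Y)\,H_Y$ of extra positivity.

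With this in hand, I would construct the desired section by pushing forward to $Y$. The plan is to establish that $H^1\!\big(X, \sO_X(D_{\inn}) \otimes \sI_x\big) = 0$, equivalently that $f_*\big(\sigma^* \sO_X(D_{\inn}) \otimes \sO(-E)\big)$ has vanishing first cohomology on $Y$; here the strong positivity of $A$ should dominate the ramification contribution $-R$ after push-forward, so that a vanishing theorem of Kawamata--Viehweg type on the smooth variety $Y$ applies. Concretely, the extra $(k-1)(\deg Y)\,H_Y$ is what absorbs $R$ and the exceptional twist, keeping the restriction to $E \cong \Pi$ surjective and producing a section nonvanishing at $x$.

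The main obstacle is precisely this last step: the interaction between the ramification divisor $R$ of the cover $f$ and the exceptional divisor $E$ over $x$, and the need to guarantee — after subtracting $R$ — that a global section survives that is nonzero along $\Pi = f(E)$. Tracking exactly when this positivity fails should single out $Y = Q^2 \subseteq \nP^3$ (forcing $n = e = 2$ and $\deg Y = 2$), in agreement with Example \ref{ex:Nomaexcsurf}; verifying that this quadric surface is the \emph{only} exception, rather than a longer list, is the delicate endpoint of the argument. A secondary technical point is ruling out, or correctly accounting for, lines of $X$ through $x$ that the projection contracts, which is what keeps $f$ finite.
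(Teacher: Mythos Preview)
Your setup is on the right track---reducing to the surjectivity of $H^0(\widetilde{X},\sigma^*D_{\inn})\to H^0(E,\sO_E)$ and pushing forward to $Y$ is exactly what the paper does---but two essential ingredients are missing, and without them the argument cannot be completed as you describe.

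First, you have no control over the pushforward $f_*\sO_{\widetilde{X}}(\sigma^*D_{\inn}-E)$. You write that ``the strong positivity of $A$ should dominate the ramification contribution $-R$,'' but $R$ lives on $\widetilde{X}$, not on $Y$, and the sheaf $f_*\sO_{\widetilde{X}}(-R+mE)$ is an opaque rank-$k$ bundle with no a priori splitting. The paper resolves this by invoking Noma's structure theorem: $\widetilde{X}$ sits as a divisor of class $\mu H_F+\tau_F^*L$ inside the projective bundle $F=\nP(\sO_Y\oplus\sO_Y(1))$, where $L$ is an effective divisor on $Y$ that is a \emph{linear} $\nP^{n-1}$ (your $\Pi=f(E)$). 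This embedding lets one replace the uncontrolled $R$ by the explicit data $(\mu,L)$ and compute the pushforward to $Y$ as a direct sum of line bundles via $\tau_{F,*}$. Without this, ``absorb $R$'' is a hope, not a step.

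Second, your plan to show $H^1(X,\sO_X(D_{\inn})\otimes\sI_x)=0$ directly is too optimistic. The paper does \emph{not} establish this vanishing in general; indeed, it only proves the analogue $H^1(\widetilde{X},\sigma^*D_{\inn}-E)=0$ in one exceptional subcase. Instead the paper uses a subtler mechanism: it sets up a commutative diagram linking the restriction maps $r_{\widetilde{X}}^m$ (for the twist $D_{\inn}+mH$) to data on $F$ and $Y$, proves a single vanishing $H^1(Y,(d-e-2\mu+m)H_Y-(K_Y+nH_Y+L))=0$ for all $m\ge 1$, and uses this to show the relevant $H^1$'s on $F$ are \emph{independent of $m$}. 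Since $D_{\inn}$ is semiample, $r_{\widetilde{X}}^m$ is surjective for $m\gg 0$, and the $m$-independence then propagates surjectivity down to $m=0$. This trick---leveraging known semiampleness rather than forcing a direct vanishing---is what makes the argument go through, and it is what singles out $Y=Q^2$ as the unique failure (that case being exactly $d-e-2\mu-1<-1$). Your outline, as written, has no substitute for either of these mechanisms.
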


In the situation of Theorem \ref{thm:bpf}, Noma's result \cite[Theorem 4]{N2} says that the blow-up $\widetilde{X}$ of $X$ at $x$ with the exceptional divisor $E$ is a smooth member of $\lvert \sO_F(\mu) \otimes \pi^* L \rvert$ for $\mu=\deg \pi_{x,X}$, where $F:=\nP (\sO_Y \oplus \sO_Y(1))$ with the canonical projection $\tau_F \colon F \to Y$ for $L= \sO_Y(\tau_F(E)) \in \Pic(Y)$. Note that $x \not\in \Bs \lvert D_{\inn} \rvert$ if and only if the restriction map $H^0(\widetilde{X}, \sO_{\widetilde{X}}(\sigma^*D_{\inn})) \to H^0(E, \sO_E)$ is surjective. We show the surjectivity of the restriction map through cohomological arguments using the geometry of $F$ and $Y$. If one sets up the problem well, then it can be easily solved when $Y \subseteq \nP^{r-1}$ is not Noma's exceptional variety. However, in the case of Noma's exceptional variety, a careful case-by-case analysis is needed.
We are not aware of any example where $D_{\inn}$ is not base point free other than Noma's example where $Y=Q^2$ (Example \ref{ex:Nomaexcsurf}). It would be exceedingly interesting to study what happens when $Y$ is singular. 

\medskip

Recall from \cite[Corollary 5.3]{Il} that $D_{\out}$ is big if and only if $X \subseteq \nP^r$ is not the Segre embedding of $\nP^1 \times \nP^{n-1}$. It is also worth to study when $D_{\inn}$ is big. Here we keep assuming that $X \subseteq \nP^r$ is not a Noma's exceptional variety. If $D_{\inn}$ is not big, then $D_{\inn}$ is base point free (Proposition \ref{prop:baselocus}). We expect that there are only a few cases where $D_{\inn}$ is not big, and in each case, the morphism given by $\lvert D_{\inn}\rvert$ would have a special structure like the adjunction mapping. We have three examples where $D_{\inn}$ is not big (Example \ref{ex:nonbig}): unless $X \subseteq \nP^r$ is a del Pezzo variety (equivalent to $D_{\inn}=0$ by \cite[Lemma 2.5]{KP2}), the morphism given by $\lvert D_{\inn}\rvert$ is either a conic fibration over $\nP^{n-1}$ or a linear fibration over $Q^{n-1}$. On the other hand, if $X \subseteq \nP^r$ is not a del Pezzo variety, $D_{\inn}$ is big in the following cases: 
\begin{enumerate}
\item $-K_X$ and $H$ are proportional. This holds whenever $e \leq n-2$ by Barth--Larsen theorem (cf. \cite[Corollary 3.2.3]{positivity}).
\item $X \subseteq \nP^r$ is not linearly normal. Indeed, $-K_X+(d-n-e-2)H$ is semiample, so $D_{\inn}=-K_X+(d-n-e-2)H+H$ is ample.
\end{enumerate}
Our result in this direction is the following.

\begin{theorem}\label{thm:D_innbig}
Let $X \subseteq \nP^r$ be a non-degenerate smooth projective variety of dimension $n \geq 2$, codimension $e \geq n+1$, and degree $d$. Assume that $X \subseteq \nP^r$ is neither a Noma's exceptional variety nor a del Pezzo variety. If
$$
d \geq \left\lceil \frac{2n-e-2}{2e-2n} \right\rceil e+1,
$$
then $D_{\inn}$ is big except when $e=n+1, d=2n$ and $\lvert D_{\inn} \rvert$ induces a linear fibration $X \to Q^{n-1}$ (cf. Example \ref{ex:nonbig}).
\end{theorem}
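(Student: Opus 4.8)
The plan is to use the nefness of $D_{\inn}$ to reduce bigness to a statement about a fibration, and then to convert the numerical hypothesis into a constraint that only the stated exceptional case can satisfy. Since $X$ is not a Noma's exceptional variety, $D_{\inn}$ is semiample, hence nef, so $D_{\inn}$ is big if and only if $D_{\inn}^n>0$, equivalently $\kappa(D_{\inn})=n$. As $X$ is not a del Pezzo variety, $D_{\inn}\neq 0$ by \cite[Lemma 2.5]{KP2}, so $\kappa(D_{\inn})\geq 1$. Suppose for contradiction that $D_{\inn}$ is not big. By Proposition \ref{prop:baselocus} the system $\lvert D_{\inn}\rvert$ is base point free and defines a fibration $\varphi\colon X\to Z$ onto a normal projective $Z$ with $\dim Z=\kappa(D_{\inn})=k$, $1\leq k\leq n-1$, and connected fibers. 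A general fiber $F$ is smooth of dimension $f=n-k\geq 1$ with trivial normal bundle, so $K_F=K_X|_F$; since $D_{\inn}|_F\equiv 0$ and $D_{\inn}=-K_X+(d-n-e-1)H$, adjunction yields
\begin{equation*}
-K_F=c\,H_F,\qquad c:=n+e+1-d,
\end{equation*}
where $H_F=H|_F$ is very ample.

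Next I would translate the hypothesis into a bound on the fiber. Writing $t:=\lceil (2n-e-2)/(2e-2n)\rceil$, the assumption $d\geq te+1$ is exactly $c\leq n-(t-1)e$, and the two regimes are $c\geq 1$ (where $F$ is Fano) and $c\leq 0$ (where $K_F$ is nef). In the Fano regime, very ampleness of $H_F$ together with the Kobayashi--Ochiai bound gives $c\leq f+1\leq n$, with $c=f+1$ only for $(F,H_F)=(\nP^f,\sO(1))$ and $c=f$ only for a quadric. The crucial structural point is that $c$ alone determines the degree, $d=n+e+1-c$; for the linear (scroll) case $c=f+1$ this reads as the clean identity $d=e+k$. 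Running each admissible $(f,c)$ through $d=n+e+1-c$, the bound $d\geq te+1$, and the condition $e\geq n+1$, I would eliminate the conic case ($f=1$, $c=1$, $d=n+e$) and all scrolls with base dimension $k<n-1$: for $t\geq 2$ these violate $d\geq te+1$ outright, while for $t\leq 1$ the identity $d=e+k$ (or $d=n+e$) with $e\geq n+1$ is incompatible with the embedding, since such varieties have bounded codimension by the classification of scrolls and conic bundles of small degree. What survives is the linear fibration with one-dimensional fibers.

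In that surviving case $f=1$, $H_F=\sO_{\nP^1}(1)$ (a line), $c=2$, hence $k=n-1$ and $d=n+e-1$, and $X=\nP(\sE)\to Z$ is a $\nP^1$-bundle over the $(n-1)$-fold $Z$. Here $d-n-e-1=-2$, and a direct computation with $K_X=-2\xi+\varphi^*(K_Z+\det\sE)$ and $H=\xi$ gives $D_{\inn}=\varphi^*\!\big(-(K_Z+\det\sE)\big)$, so $\lvert D_{\inn}\rvert$ is the pullback of the polarization $A:=-(K_Z+\det\sE)$ on $Z$. Feeding $d=n+e-1$ into $d\geq te+1$ forces $(t-1)e\leq n-2$, hence $t\leq 1$, and combined with $e\geq n+1$ this pins the numerics to $e=n+1$, $d=2n$. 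I would then invoke the adjunction-theoretic classification of scrolls over $(n-1)$-folds (in the spirit of \cite{Io1} and Fujita--Ionescu) to identify $(Z,A)$ with $(Q^{n-1},\sO(1))$, producing exactly the stated exceptional linear fibration $X\to Q^{n-1}$.

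The main obstacle is the regime $c\leq 0$, namely fibers of nonnegative Kodaira dimension, which is what $t\geq 2$ forces. Here Kobayashi--Ochiai gives nothing and a naive degree count does not suffice, because the forced value $d=n+e+1-c$ remains compatible with the lower bound $d\geq te+1$; one must instead exclude such fibrations by the finer geometry, for instance by showing that a Calabi--Yau or general type fiber is incompatible with $e\geq n+1$ and the effectivity of the double point divisor, or by establishing that $X$ is uniruled so that its fibers are necessarily rational. A second delicate point, already visible above, is bounding the codimension of the extremal scrolls and conic bundles so as to rule them out when $t\leq 1$, and then verifying that no $(n-1)$-dimensional base other than $Q^{n-1}$ survives. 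This is precisely where the exact shape of the bound $d\geq \lceil (2n-e-2)/(2e-2n)\rceil e+1$ and a careful case-by-case use of adjunction theory enter, and I expect it to require the bulk of the technical work.
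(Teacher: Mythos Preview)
Your approach is structurally different from the paper's and contains a genuine gap that you yourself flag. You try to classify the fibration induced by $\lvert D_{\inn}\rvert$ when $D_{\inn}$ is not big: on a general fiber $F$ you get $-K_F=cH_F$ with $c=n+e+1-d$, and then invoke Kobayashi--Ochiai. This only bites when $c\geq 1$, i.e.\ $d\leq n+e$. But the regime $c\leq 0$ (equivalently $d\geq n+e+1$) is the \emph{main} case, not a corner: for instance when $e\geq (4n-2)/3$ the ceiling in the hypothesis is at most $1$, so the theorem must cover every $d\geq e+1$, including arbitrarily large $d$. Your proposal offers nothing there beyond the hope that ``$X$ is uniruled so its fibers are rational'', which is neither proved nor sufficient. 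Even within the Fano regime your eliminations are not rigorous (``incompatible with the embedding, since such varieties have bounded codimension''), and in the surviving linear-fibration case the step ``$t\leq 1$ combined with $e\geq n+1$ pins the numerics to $e=n+1$'' is simply false: $t\leq 1$ only says the degree hypothesis is weak, it does not constrain $e$.

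The paper sidesteps all fiber classification with a purely numerical argument. It applies Siu's bigness criterion to $D=(d-e-2)H$ and $E=K_X+(n-1)H$ (so $D-E=D_{\inn}$), reducing bigness to $2g<((d-e-2)d+2n)/n$ for the sectional genus $g$; Castelnuovo's bound then converts this into an explicit quadratic inequality $q(\varepsilon)>0$ in the residue $\varepsilon=(d-1)\bmod e$. The degree hypothesis is exactly the condition that the vertex of $q$ lies at or left of $\varepsilon=e-n+1$; since Proposition~\ref{prop:D_innbig} already gives $q(\varepsilon)>0$ for $\varepsilon\leq e-n+1$ (and in particular $q(e-n+1)>0$), monotonicity finishes the job. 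The small-degree range $d\leq r$, where the exceptional $X\to Q^{n-1}$ actually lives, is handled separately by Ionescu's classification (Lemma~\ref{lem:D_innd<=r}).
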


If $e \geq (4n-2)/3$, then the degree condition in Theorem \ref{thm:D_innbig} becomes $d \geq e+1$, which is automatically satisfied (see Corollary \ref{cor:e>=(4n-2)/3=>D_innbig}). In other words, $D_{\inn}$ becomes big as soon as $e$ is large enough. Conversely, when $e \geq n+1$ and $D_{\inn}$ is not big, we have $d \leq (4n+1)/3$, so $X$ belongs to a bounded family once we fix the dimension $n$. It would be possible to classify all such cases. On the other hand, we also derive the following consequence asserting that the Iitaka dimension $\kappa(D_{\inn})$ is large. This suggests that the image of the morhpism given by $\lvert D_{\inn} \rvert$ might be big in contrast to the adjunction mapping. 

\begin{corollary}\label{cor:Iitakadim}
Let $X \subseteq \nP^r$ be a non-degenerate smooth projective variety of dimension $n \geq 2$, codimension $e \geq 2$, and degree $d$. Assume that $X \subseteq \nP^r$ is neither a Noma's exceptional variety nor a del Pezzo variety. Then $\kappa(D_{\inn}) \geq (3n-1)/4$. 
\end{corollary}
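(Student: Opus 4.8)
The plan is to reduce, via general linear sections, to the bigness criterion of Corollary~\ref{cor:e>=(4n-2)/3=>D_innbig}, and to cover the range of small codimension with the Barth--Larsen theorem. If $D_{\inn}$ is big then $\kappa(D_{\inn})=n\geq(3n-1)/4$, so I may assume $D_{\inn}$ is not big and set $\kappa:=\kappa(D_{\inn})<n$. The first point is that $D_{\inn}$ is compatible with sectioning: for a general linear subspace $\Lambda\cong\nP^{r-k}$ of codimension $k\leq n$, the section $X_k:=X\cap\Lambda\subseteq\nP^{r-k}$ is a smooth non-degenerate variety of dimension $n-k$, codimension $e$, and degree $d$, and adjunction $K_{X_k}=(K_X+kH)|_{X_k}$ gives
\[
D_{\inn}(X_k)=-K_{X_k}+(d-(n-k)-e-1)H|_{X_k}=\bigl(-K_X+(d-n-e-1)H\bigr)\big|_{X_k}=D_{\inn}|_{X_k}.
\]
Thus the pair $(e,d)$ is preserved while the dimension drops by $k$.

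Next I would record the stability of the Iitaka dimension under general sectioning. By Proposition~\ref{prop:baselocus}, $D_{\inn}$ is base point free, so $D_{\inn}=\phi^*A$ for the morphism $\phi=\phi_{|D_{\inn}|}\colon X\to Z$ with $A$ ample and $\dim Z=\kappa$. For general $X_k$ one then has $D_{\inn}|_{X_k}=(\phi|_{X_k})^*A$ and $\dim\overline{\phi(X_k)}=\min(n-k,\kappa)$, whence $\kappa(X_k,D_{\inn}(X_k))=\min(\kappa,n-k)$. Consequently, for every integer $2\leq m\leq n$ with $e\geq(4m-2)/3$, Corollary~\ref{cor:e>=(4n-2)/3=>D_innbig} applied to $X_{n-m}$ forces $D_{\inn}(X_{n-m})$ to be big, i.e. $\min(\kappa,m)=m$, so $\kappa\geq m$; taking $m$ maximal gives
\[
\kappa(D_{\inn})\;\geq\;\min\!\Bigl(n,\ \bigl\lfloor\tfrac{3e+2}{4}\bigr\rfloor\Bigr).
\]
Here one must check that the relevant sections are neither Noma's exceptional nor del Pezzo varieties, which holds for a general section of dimension $\geq 2$ by Lefschetz-type arguments.

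It remains to convert this into the stated bound by splitting on $e$. If $e\leq n-2$, the Barth--Larsen theorem gives $\Pic(X)=\nZ\cdot H$, so $-K_X$ is a multiple of $H$ and $D_{\inn}$ is a positive multiple of $H$ (positive since $X$ is not del Pezzo); then $D_{\inn}$ is big and $\kappa=n$. If $e\geq n-1$, then $\kappa\geq\lfloor(3e+2)/4\rfloor$; for $e\geq n$ the right-hand side is already $\geq(3n-1)/4$, while for $e=n-1$ it equals $\lfloor(3n-1)/4\rfloor$, which settles the residue $n\equiv3\pmod4$.

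The one genuinely delicate case, and what I expect to be the main obstacle, is the boundary codimension $e=n-1$ with $n\not\equiv3\pmod4$, where the floor above misses $(3n-1)/4$ by exactly one. Non-bigness provides extra structure here: the general fibre $F$ of $\phi$ has dimension $f=n-\kappa$ and satisfies $-K_F\equiv(n+e+1-d)H|_F$ with $H|_F$ very ample, so by Kobayashi--Ochiai $F$ is a linearly embedded $\nP^f$, a quadric $Q^f$, or a del Pezzo manifold, and correspondingly $d=\kappa+e+c$ with $c\in\{0,1,2\}$ (matching Example~\ref{ex:nonbig}). Since the previous step already forces $f\leq\lceil(n+1)/4\rceil$, it suffices to exclude the extremal value $f=\lceil(n+1)/4\rceil$; equivalently, to show that a smooth $X\subseteq\nP^{2n-1}$ swept out by an $(n-f)$-dimensional family of such fibres of this dimension must be a scroll over a curve, a Roth variety, or the Veronese surface — that is, a Noma's exceptional variety — against our hypotheses. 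I expect this exclusion, which ties the covering directly to the inner Segre locus $\sC(X)$ and to Noma's structural classification, to require the sophisticated case analysis analogous to the exceptional cases of Theorem~\ref{thm:D_innbig}; together with a direct check of the finitely many small $n$, it yields $f\leq\lfloor(n+1)/4\rfloor$ and hence $\kappa=n-f\geq(3n-1)/4$.
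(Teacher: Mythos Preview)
Your overall strategy --- Barth--Larsen for $e\le n-2$, then general linear sectioning combined with Corollary~\ref{cor:e>=(4n-2)/3=>D_innbig} for $e\ge n-1$ --- is exactly the paper's. Two differences of packaging: the paper uses semiampleness of $D_{\inn}$ to write $\kappa(D_{\inn})=\nu(D_{\inn})$ and checks $D_{\inn}^{m}\cdot H^{n-m}>0$ by showing $D_{\inn}|_Y$ is big for a general $m$-dimensional section $Y$, whereas you route through Proposition~\ref{prop:baselocus} and base-point-freeness. And the paper dispatches the low-degree range $d\le r$ (and with it the exceptional clause of Corollary~\ref{cor:e>=(4n-2)/3=>D_innbig} and the worry about whether the sections are del Pezzo or Noma-exceptional) via Ionescu's classification (Lemma~\ref{lem:D_innd<=r}), rather than your appeal to ``Lefschetz-type arguments''; once $d\ge r+1$, the numerics alone rule out those degenerate section types.

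The substantial divergence is your last two paragraphs. You correctly observe that the sectioning argument literally yields only $\kappa(D_{\inn})\ge\lfloor(3n-1)/4\rfloor$, one short of the stated bound precisely when $e=n-1$ and $n\not\equiv3\pmod4$. The paper's proof does \emph{not} address this: it writes ``$D_{\inn}^{n-k}\not\equiv0$ for $n-k\le(3n-1)/4$'' and concludes, so the same floor discrepancy is present there. Your proposed remedy --- classifying the general fibre via Kobayashi--Ochiai, reading off $d=\kappa+e+c$ with $c\in\{0,1,2\}$, and then asserting that such an $X\subseteq\nP^{2n-1}$ must be Noma-exceptional --- goes far beyond what the paper does and is not a proof: the central exclusion step is stated, not established, and there is no reason to expect a variety fibred in linear spaces, quadrics, or del Pezzo manifolds over a base of the relevant dimension to land in Noma's short list. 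You have manufactured a hard classification problem where the paper simply accepts (or overlooks) the gap between $(3n-1)/4$ and its floor; the honest output of both arguments is $\kappa(D_{\inn})\ge\lfloor(3n-1)/4\rfloor$.
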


To prove Theorem \ref{thm:D_innbig}, it is enough to show $D_{\inn}^n>0$ because $D_{\inn}$ is nef, but it is not easy to control the intersection numbers $(-K_X)^i.H^{n-i}$. Instead, we utilize the technique developed by the second author with Sijong Kwak in \cite{KP1, KP2} based on Siu's bigness criterion (cf. \cite[Theorem 2.2.15]{positivity}): if $D,E$ are nef divisors and $nD^{n-1}.E < D^n$, then $D-E$ is big. We take $D:=(d-e-2)H$ and $E:=K_X+(n-1)H$. Note that $D-E=E_{\inn}$ and $nD^{n-1}.E < D^n$ is equivalent to $2g < ((d-e-2)d+2n)/n$, where $g$ is the sectional genus of $X \subseteq \nP^r$. We then apply Castelnuovo's genus bound (see Subsection \ref{subsec:Castgenusbd}). This method works well when $e \geq n+1$. If $e \leq n-2$, then Barth--Larsen theorem implies that $D_{\inn}$ is big. It remains to figure out when $D_{\inn}$ is not big for the case of $e=n-1$ or $n$. For small dimensional cases, this problem can be approached using specific geometric properties. In particular, we show that $D_{\inn}$ is big for surfaces and threefolds except one case.

\begin{theorem}\label{thm:D_innbigdim=3}
Let $X \subseteq \nP^r$ be a non-degenerate smooth projective variety of dimension $n =2$ or $3$, codimension $e \geq 2$, and degree $d$. Assume that $X \subseteq \nP^r$ is neither a Noma's exceptional variety nor a del Pezzo variety. Then $D_{\inn}$ is big except when $n=3, e=3, d=6$ and $\lvert D_{\inn} \rvert$ induces a conic fibration $X \to \nP^2$ (cf. Example \ref{ex:nonbig}).
\end{theorem}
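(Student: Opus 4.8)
The plan is to reduce to small codimension, run the Siu--Castelnuovo engine of Theorem~\ref{thm:D_innbig} where it applies, and treat the residual boundary cases by explicit geometry. Since $X$ is not a Noma's exceptional variety, $D_{\inn}$ is semiample, hence nef, so bigness is equivalent to $D_{\inn}^n>0$. As for the codimension, I first note that $e\leq n-2$ cannot occur when $n\leq 3$ and $e\geq 2$, while for $e\geq n+1$ the ceiling term in Theorem~\ref{thm:D_innbig} is nonpositive, so its degree hypothesis is automatic and that theorem applies. Its single exception, $e=n+1$ and $d=2n$ with a linear fibration over $Q^{n-1}$, is for $n=2$ a scroll over a curve and for $n=3$ the Segre threefold $\nP^1\times\nP^1\times\nP^1$, a del Pezzo variety; both are excluded by hypothesis. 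Thus only $(n,e)\in\{(2,2),(3,2),(3,3)\}$ remain.

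For the two cases with $e=n$ I apply Siu's criterion with $D=(d-e-2)H$ and $E=K_X+(n-1)H$, which reduces $D_{\inn}^n>0$ to $2g<((d-n-2)d+2n)/n$, and compare with Castelnuovo's bound for the general curve section in $\nP^{n+1}$. A direct comparison shows the Castelnuovo bound is strictly below the Siu threshold unless the section is Castelnuovo-extremal, so the generic subcase is immediate. When the section is extremal, its general curve section lies on a surface of minimal degree, so by a standard extension argument $X$ lies on a variety of minimal degree $V$; computing $K_X$ by adjunction on $V$ then exhibits $D_{\inn}$ as an explicit multiple of $H$. For instance, an extremal surface in $\nP^4$ lies on a quadric threefold $Q^3$ with $X\in|kH|$, giving $K_X=(k-3)H|_X$ and $D_{\inn}=(k-2)H$, which is ample once $d=2k\geq 6$; the borderline $d=4$ is the del Pezzo complete intersection, excluded. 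This settles the surface case with no exception and reduces $(3,3)$ to its extremal instances.

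The genuinely hard case is $e=n-1$, that is $(n,e)=(3,2)$, where one checks that Castelnuovo's bound always exceeds the Siu threshold, so the engine above fails outright. Here I would instead exploit the codimension-two geometry of $X\subseteq\nP^5$. Its normal bundle $N$ has rank two with $c_1(N)=K_X+6H$, and the self-intersection formula gives $c_2(N).H=d^2$; combined with the double point formula for the hyperplane-section surface $S\subseteq\nP^4$ and Noether's formula this yields $K_X^2.H=\tfrac12 d^2+\tfrac12 d+9-9g+6\chi(\sO_S)$, while $K_X.H^2=2g-2-2d$. Feeding these, and a bound on $K_X^3$ and on $\chi(\sO_S)$ coming from Riemann--Roch and Hodge-index inequalities, into $D_{\inn}^3=-K_X^3+3(d-6)K_X^2.H-3(d-6)^2K_X.H^2+(d-6)^3 d$ should force $D_{\inn}^3>0$ for every such threefold that is neither del Pezzo nor of minimal degree.

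It remains to isolate the unique surviving non-big case. Among the extremal $(3,3)$ instances only $d=6$, $g=2$ fails the Siu inequality: there $D_{\inn}=-K_X-H$, and the extremal structure of the section propagates to a conic fibration $\varphi\colon X\to\nP^2$ with $D_{\inn}=\varphi^*\sO_{\nP^2}(1)$, whence $D_{\inn}^3=0$ and $D_{\inn}$ is not big (cf. Example~\ref{ex:nonbig}); since this $X$ is neither del Pezzo nor Noma's exceptional, it is a genuine exception. The main obstacle is precisely the codimension-two threefold case: no Siu-type estimate is available, and one must pin down $K_X^3$ and $\chi(\sO_S)$ sharply enough through the double point formulas and positivity inequalities; a secondary difficulty is verifying that every Castelnuovo-extremal threefold other than $d=6$ retains $D_{\inn}^3>0$.
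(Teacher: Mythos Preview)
Your overall architecture---reduce via Theorem~\ref{thm:D_innbig} to $(n,e)\in\{(2,2),(3,2),(3,3)\}$, run Siu against Castelnuovo for $e=n$, and treat Castelnuovo-extremal sections separately---matches the paper's. Two points deserve correction.

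For $(n,e)=(3,3)$ extremal, your claim that adjunction on the minimal-degree variety $V$ exhibits $D_{\inn}$ as ``an explicit multiple of $H$'' is correct for the surface case (where $V=Q^3$ has Picard rank one) but false for the threefold. There $V$ is a rational normal scroll with Picard rank two; the paper uses \cite[Proposition~5.2]{N2} to place $X\in|aH_S|$ on $S=\nP(\sO_{\nP^1}\oplus\sO_{\nP^1}(1)^{\oplus 3})$, computes $D_{\inn}=(2a-3)H-F$ with $F$ the fiber class, and checks $D_{\inn}^3=3(2a-3)^2a(2a-4)>0$ directly. This scroll computation only applies for $d\ge 9$; the extremal pairs $(g,d)=(2,6),(5,8)$ are handled instead by Ionescu's degree-$\le 8$ classification, which you do not invoke. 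Note in particular that $(5,8)$ also sits exactly on the Siu threshold, so it is not true that ``only $d=6$ fails the Siu inequality.''

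For $(n,e)=(3,2)$ your proposal is only a sketch, and the paper's route is genuinely different. Your double-point-formula expansion of $D_{\inn}^3$ leaves $K_X^3$ and $\chi(\sO_S)$ essentially uncontrolled, and you say as much. The paper instead first splits on whether $X\subseteq\nP^5$ lies on a quadric (then $X$ is a complete intersection or Roth, by \cite{Kwak} and \cite{N2}); if not, it replaces Castelnuovo by Halphen's bound $g\le(d^2-3d+6)/6$ for space curves not on a quadric. It then brings in adjunction theory: after disposing of the hyperquadric- and linear-fibration adjunction types (the latter via Ottaviani's classification \cite{Ottaviani}) and of $d\le 7$ via Ionescu, for $d\ge 8$ one has $K_X+2H$ ample and hence $E:=K_X+H$ nef, so Siu applies with the sharper pair $D=(d-5)H$, $E=K_X+H$, and $3D^2\cdot E<D^3$ follows from Halphen. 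This is the step you correctly flagged as the main obstacle, and the paper's Halphen-plus-refined-Siu argument is what actually closes it.
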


One motivation for studying the positivity of double point divisors is that it has applications to the Castelnuovo--Mumford regularity. A linear bound for $\operatorname{reg}(X)$ was shown using the property that $D_{\out}$ is base point free (\cite[Theorem 3.12]{BM}, \cite[Theorem C]{KP2}), and a sharp bound for $\operatorname{reg}(\sO_X)$ was proven using the property that $D_{\inn}$ is semiample (\cite[Theorem B]{KP2}, \cite[Corollary 5]{N1}). This gives evidence for the Eisenbud--Goto regularity conjecture for smooth projective varieties. We refer to \cite{KP2} for more details. We hope that the stronger positivity of double point divisors obtained in this paper would lead to a better regularity bound.

\medskip

The paper is organized as follows. In Section \ref{sec:prelim}, we introduce basic notions and collect some known facts. Sections \ref{sec:va1} and \ref{sec:va2} are devoted to the proof of Theorem \ref{thm:D_outva}. We show Theorem \ref{thm:bpf} in Section \ref{sec:bpf} and Theorems \ref{thm:D_innbig} and \ref{thm:D_innbigdim=3} and Corollary \ref{cor:Iitakadim} in Section \ref{sec:big}. 

\medskip

\noindent {\bf Acknowledgments.} We are grateful to Sijong Kwak and Atsushi Noma for valuable discussion and encouragement.

\section{Preliminaries}\label{sec:prelim}

\subsection{Roth varieties}
A \emph{Roth variety} $X \subseteq \nP^r$ is an isomorphic projection of a divisor on a rational normal scroll $\overline{S} \subseteq \nP^N$ which is the image of the birational embedding $\psi$ of $S:=\nP(\sO_{\nP^1}^{\oplus 2} \oplus \sO_{\nP^1}(a_1) \oplus \cdots \oplus \sO_{\nP^1}(a_{n-1}))$ with $1 \leq a_1 \leq \cdots a_{n-1}$ given by $\lvert \sO_S(1) \rvert$ such that $X$ is a general smooth member of $\lvert \sO_S(b) \otimes \pi^* \sO_{\nP^1}(1)\rvert$ for an integer $b \geq 1$, where $\pi \colon S \to \nP^1$ is the canonical projection. Here $N=a_1+\cdots+a_{n-1}+n$ and $\deg \overline{S}=a_1+ \cdots + a_{n-1}$. The birational morphism $\psi \colon S \to \overline{S}$ contracts $\Gamma:=\nP(\sO_{\nP^1}^{\oplus 2}) \subseteq S$ to a line $\ell \subseteq \overline{S} \subseteq \nP^N$. Note that $\psi|_X \colon X \to \nP^N$ is an embedding and $\ell = \psi(\Gamma) = \psi(X \cap \Gamma) \subseteq X$. Here $X \subseteq \nP^N$ is non-degenerate and linearly normal, and $\deg X = b(a_1 + \cdots + a_{n-1})+1$. If $b=1$, then the Roth variety $X \subseteq \nP^r$ is a rational scroll. See \cite[Section 3]{Il} for more details.

\subsection{Segre loci}
In this subsection, we do not assume that $X$ is smooth. The \emph{outer Segre locus} of $X \subseteq \nP^r$ is defined as
$$
\sB(X):=\{u \in \nP^r \setminus X \mid \length(X \cap \langle x, u \rangle ) \geq 2~\text{ for general $x \in X$} \}.
$$
Note that $u \in \sB(X)$ if and only if the outer projection $\pi_{u, X} \colon X \to \nP^{r-1}$ centered at $u$ is nonbirational. Keep in mind that we always assume $e \geq 2$. Then we have $\dim \sB(X) \leq \min\{n-1, \dim \Sing X+1\}$ (\cite[Theorem 4]{N0}). When $X$ is smooth, we put $\dim \Sing X = -1$. The \emph{inner Segre locus} of $X \subseteq \nP^r$ is defined as
$$
\sC(X):=\{u \in X \mid \length(X \cap \langle u, x \rangle) \geq 3~\text{for general $x \in X$} \}.
$$
Note that $u \in \sC(X)$ if and only if the inner projection $\pi_{u, X} \colon X \dashrightarrow \nP^{r-1}$ centered at $u$ is nonbirational. As $e \geq 2$, we have $\dim \sC(X) \leq \min\{ n-1, \dim \Sing X + 2\}$ (\cite[Theorem 5]{N0}). When $X$ is smooth, $\dim \sC(X) \leq 1$ and the equality holds if and only if $X \subseteq \nP^r$ is a Roth variety but not a rational scroll. We refer to \cite{N0, N1, N2} for more details.

\subsection{Birational divisors on conical scrolls}\label{subsec:birdiv}
We assume again that $X$ is smooth. If $\sC(X)$ is a nonempty finite set and $x \in \sC(X)$, then the inner projection $\pi_{x, X} \colon X \dashrightarrow \overline{Y} \subseteq \nP^{r-1}$ centered at $x$ is not birational. Note that the cone
$$
\Cone(x,X):=\overline{\bigcup_{u \in X \setminus \{x\}} \langle x,u \rangle}
$$
is a conical scroll over $\overline{Y}$. There is a non-degenerate birational embedding $\nu \colon Y \to \overline{Y} \subseteq \nP^{r-1}$ from a smooth projective variety $Y$ of dimension $n$ such that for $F:=\nP (\sO_Y \oplus \sO_Y(1))$ with $\sO_Y(1)=\nu^*\sO_{\overline{Y}}(1)$, a linear subsystem of $\lvert \sO_F(1) \rvert$ gives a birational embedding $\sigma_F \colon F \to \nP^r$ with $\overline{F}:=\sigma_F(F)=\Cone(x,X)$ contracting $\Gamma := \nP(\sO_Y) \subseteq F$ to $x \in \overline{F}$. If $\tau_F \colon F \to Y$ is the canonical projection, then $\sO_F(\Gamma) = \sO_F(1) \otimes \tau_F^* \sO_Y(-1)$. By \cite[Theorem 4]{N2}, $X$ is a \emph{birational divisor of type $(\mu, L)$} on a conical scroll $\overline{F}$: there is a prime divisor $\widetilde{X} \in \lvert \sO_F(\mu) \otimes \tau_F^* L \rvert$ for some $\mu \geq 2$ and $L \in \Pic(Y)$ with $H^0(Y, L) \neq 0$ and  $L.\sO_Y(1)^{n-1} = 1$ such that $\sigma:=\sigma_F|_{\widetilde{X}} \colon \widetilde{X} \to X \subseteq \nP^r$ is a non-degenerate birational embedding and $\mu=\deg \pi_{x,X}, d = \mu \deg \overline{Y} + 1$. If $Y$ is smooth, then we may take $\nu=\operatorname{id}_Y$. In this case, $\sigma$ is the blow-up of $X$ at $x$, and $E:=\Gamma \cap \widetilde{X}$ is the exceptional divisor. Using \cite[Theorem 5]{N2} and Bertini theorem, one can construct smooth birational divisors on conical scrolls (see \cite[Example 4.2]{N2} and Example \ref{ex:singularY}).

\subsection{Double point divisors from outer projection}
Let $\Lambda$ be an $(e-2)$-dimensional linear subspace of $\nP^r$ with $X \cap \Lambda = \emptyset$, and consider the outer projection
$$
\pi_{\Lambda, X} \colon X \longrightarrow \overline{X} \subseteq \nP^{n+1}
$$
centered at $\Lambda$. Notice that $\pi_{\Lambda, X}$ is isomorphic in a neighborhood of $x \in X$ if and only if $\Lambda \cap \Cone(x, X) = \emptyset$. Thus if $\Lambda \cap \Cone(x, X) = \emptyset$ for some $x \in X$, then $\pi_{\Lambda, X}$ is finite and birational. Suppose that $\pi_{\Lambda, X}$ is finite and birational. Note that $\deg \overline{X} = d$. By birational double--point formula (cf. \cite[Lemma 10.2.8]{positivity}), there exists an effective divisor $D_{\out}(\pi_{\Lambda, X})$ on $X$ such that
$$
D_{\out}(\pi_{\Lambda, X}) \sim -K_X + \pi_{\Lambda, X}^*K_{\overline{X}} = -K_X + (d-n-2)H=D_{\out}.
$$
Note that $\Supp(D_{\out}(\pi_{\Lambda, X}))$ is the non-isomorphic locus of $\pi_{\Lambda, X}$. If $S_{\Lambda,X}$ is the cone over $\pi_{\Lambda,X}(D_{\out}(\pi_{\Lambda,X}))$ with vertex at $\Lambda$, then scheme-theoretically 
$D_{\out}(\pi_{\Lambda, X}) = X \cap S_{\Lambda, X}$.
Recall that the linear subsystem $\lvert V_{\out} \rvert$ generated by geometric divisors is base point free (\cite[Technical Appendix 4]{BM}). Furthermore, $\lvert V_{\out} \rvert$ separates two distinct points and hence $D_{\out}$ is ample  unless $X \subseteq \nP^r$ is a Roth variety (\cite[Theorem 4.2]{Il}). When $X \subseteq \nP^r$ is a Roth variety, $D_{\out}.\ell = 0$ for the line $\ell=\psi(\Gamma) \subseteq X$ and thus $D_{\out}$ is not ample (\cite[Proposition 3.8]{Il}).

\subsection{Double point divisors from inner projection}
Let $x_1, \ldots, x_{e-1} \in X$ be general points, and consider the inner projection 
$$
\pi_{\Lambda, X}\colon X\dashrightarrow \overline{X} \subseteq \nP^{n+1}
$$
centered at $\Lambda:= \langle x_1, \ldots, x_{e-1} \rangle \subseteq \nP^r$. By the general position lemma (cf. \cite[Lemma 1.2]{N1}), $\Lambda$ is an $(e-2)$-dimensional linear subspace in $\nP^r$. 
Let $\sigma \colon \widetilde{X} \to X$ be the blow-up of $X$ at $x_1, \ldots, x_{e-1}$. There exists a birational morphism $\widetilde{\pi}_{\Lambda, X} \colon \widetilde{X} \to \overline{X}$. We have the following commutative diagram
\[\xymatrix{
\widetilde{X} \ar^{\sigma}[r] \ar_{\widetilde{\pi}_{\Lambda, X}}[rd] & X \ar@{.>}[d]^-{\pi_{\Lambda, X}} \ar@{^{(}->}[r] & \nP^{r} \ar@{.>}[d]\\
& \overline{X} \ar@{^{(}->}[r] & \nP^{n+1}.
}\]
Noma \cite[Theorem 3]{N1} proved that if $X \subseteq \nP^r$ is neither a scroll over a smooth projective curve nor the second Veronese surface, then $\widetilde{\pi}_{\Lambda, X} \colon \widetilde{X} \to X$ does not contract any divisor. In this case, $\deg(\overline{X})=d-e+1$. By the birational double--point formula, there exists an effective divisor $\widetilde{D}(\widetilde{\pi}_{\Lambda, X})$  on $\widetilde{X}$ such that
$$
\widetilde{D}(\widetilde{\pi}_{\Lambda, X}) \sim -K_{\widetilde{X}} + \widetilde{\pi}_{\Lambda, X}^*K_{\overline{X}}.
$$
Then 
$$
D_{\inn}(\pi_{\Lambda, X}) = \sigma_*\widetilde{D}(\widetilde{\pi}_{\Lambda, X}) \sim -K_X + (d-n-e-1)H = D_{\inn}.
$$
Note that $\Supp (D_{\inn}(\pi_{\Lambda, X}))$ is the divisorial part of the closure of the non-isomorphic locus of $\pi_{\Lambda, X}$. Recall that $\Bs \lvert D_{\inn} \rvert \subseteq \sC(X)$ (\cite[Theorem 1]{N1}). When $X \subseteq \nP^r$ is not a Roth variety, $\sC(X)$ is at most a finite set. Thus if $X \subseteq \nP^r$ is not a Noma's exceptional variety (a Roth variety, a scroll over a smooth projective curve, or the second Veronese surface), then Fujita--Zariski theorem (cf. \cite[Remark 2.1.32]{positivity}) implies that $D_{\inn}$ is semiample (\cite[Theorem 4]{N1}).

\subsection{Castelnuovo's genus bound}\label{subsec:Castgenusbd}
Let $C \subseteq \nP^r$ be a non-degenerate smooth projective curve of genus $g$, codimension $e$, and degree $d$. Castelnuovo's genus bound (\cite[page 116]{ACGH}) says that
\begin{equation}\label{eq:Castelnuovo-big}
g \leq \frac{(d-1-\epsilon)(d+\epsilon-e-1)}{2e} = \frac{(d-e-2)d+(\varepsilon+1)(e+1-\varepsilon)}{2e},
\end{equation}
where $m:=\lfloor (d-1)/e \rfloor$ and $\varepsilon :=(d-1)-me$. Note that $m \geq 1$ and $0 \leq \varepsilon \leq e-1$. When the equality holds, $C \subseteq \nP^r$ is projectively normal (\cite[page 117]{ACGH}). If furthermore $d \geq 2e+3$, then $C$ is a smooth divisor on a rational normal scroll except when $e=4$ and $d=2k$, in which case, $C \subseteq \nP^5$ is the image of the plane curve of degree $k$ under the second Veronese embedding $v_2 \colon \nP^2 \to \nP^5$ (\cite[Theorem 2.5]{ACGH}).

\section{Very ampleness of $S_{\out}$}\label{sec:va1}

This section is devoted to the proof of Theorem \ref{thm:D_outva} $(1)$. We first show some lemmas, which are certainly well-known, but we write down the detailed proofs for the reader's convenience.

\begin{lemma}\label{lem: smoothness of hyperplane sections}
Let $L$ be a line in $\nP^r$ passing through a point $x \in X$. If $n \geq 3$ and $H_1,\ldots,H_{n-2} \subseteq \nP^r$ are general hyperplanes containing $L$, then  $X_i := X \cap H_1 \cap \cdots \cap H_i$ is smooth for each $i=1,\ldots,n-2$.
\end{lemma}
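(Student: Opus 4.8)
The plan is to argue by induction on $i$, reducing everything to a single Bertini-type statement: if $W \subseteq \nP^m$ is a smooth projective variety of dimension $w \geq 3$ and $L \subseteq \nP^m$ is a line, then a general member $H$ of the linear system $\Sigma$ of hyperplanes containing $L$ cuts out a smooth $W \cap H$. Granting this, I would set $P_{i-1} := H_1 \cap \cdots \cap H_{i-1} \cong \nP^{r-i+1}$ and apply the statement to $W = X_{i-1} \subseteq P_{i-1}$, which is smooth of dimension $n-i+1 \geq 3$ for $i \leq n-2$ by the inductive hypothesis. Since every $H_j$ contains $L$, we have $L \subseteq P_{i-1}$, and a general hyperplane $H_i \subseteq \nP^r$ through $L$ restricts to a general hyperplane of $P_{i-1}$ through $L$, with $X_i = X_{i-1} \cap H_i$. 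Because $x \in L \cap X_i$, the hypothesis that $L$ meets the variety persists, so the induction runs.

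To prove the reduced statement I would split the smoothness check into two regimes. Away from the base locus $\Bs(\Sigma|_W) = W \cap L$, standard Bertini (in characteristic $0$) already gives that $W \cap H$ is smooth for general $H \in \Sigma$, since $W$ itself is smooth. The content is therefore smoothness along $W \cap L$. Here I would use that $W \cap H$ is singular at a point $p$ exactly when $H \supseteq \nT_p W$, and count dimensions in the incidence variety $\mathcal{I} = \{(p,H) \in (W \cap L) \times \Sigma : \nT_p W \subseteq H\}$. The fibre of $\mathcal{I} \to W \cap L$ over $p$ consists of the hyperplanes containing $\langle L, \nT_p W\rangle \supseteq \nT_p W$, hence has dimension at most $m - w - 1$; as $\dim(W \cap L) \leq 1$, we get $\dim \mathcal{I} \leq m - w$. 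Comparing with $\dim \Sigma = m - 2$, the image of $\mathcal{I}$ in $\Sigma$ is a proper closed subset precisely when $w \geq 3$, so a general $H \in \Sigma$ avoids it and $W \cap H$ is smooth along $W \cap L$ as well.

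The main obstacle, and the only place the hypothesis $n \geq 3$ is genuinely needed, is the subcase $L \subseteq W$. When $L \not\subseteq W$ the base locus $W \cap L$ is finite, the incidence count gives $\dim \mathcal{I} \leq m - w - 1$, and even $w \geq 2$ would suffice; but a projective tangent line can lie on $X$, so one must allow $W \cap L$ to be one-dimensional, and it is exactly this extra dimension in the base that forces $w \geq 3$ in the comparison $m - w < m - 2$. One should also note that $\mathcal{I}$ is closed because the projective tangent spaces vary algebraically with $p$, so its image really is a proper closed subset of the irreducible $\Sigma$. Finally, I would combine the two regimes: the two loci of bad $H$ are each contained in proper closed subsets of $\Sigma$, so their union is too, and a general $H \in \Sigma$ yields $W \cap H$ smooth everywhere, completing both the reduced statement and the induction.
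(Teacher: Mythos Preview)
Your proposal is correct and follows essentially the same approach as the paper's proof: both argue by induction, use Bertini to handle points away from $L$, and for points on $W \cap L$ perform the same dimension count (hyperplanes containing $\nT_p W$ form a family of dimension $m-w-1$, the base $W \cap L$ has dimension at most $1$, and the linear system of hyperplanes through $L$ has dimension $m-2$, so $w \geq 3$ forces the bad locus to be proper). The only cosmetic difference is that you isolate the inductive step as a standalone statement about an abstract $W \subseteq \nP^m$, whereas the paper writes the count directly inside the induction.
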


\begin{proof}
By Bertini theorem, we may assume $X_i$ is smooth at the points in $X_i \setminus L$. We claim by induction on $i$ that $X_i$ is smooth at the points of $L \cap X_i$. For $i=0$, clearly $X_0 := X$ is smooth. For $i \leq n-3$, if $X_i$ is smooth at the points of $L \cap X_i$, then for each $y \in L \cap X_i$, the projective tangent space $\mathbf{T}_y X_i$ has codimension $r-n$ in $\nP^{r-i} = H_1\cap \cdots \cap H_{i}$. 
The hyperplanes containing $\mathbf{T}_yX_i$ form a linear subspace of dimension $r-n-1$ in $(\nP^{r-i})^\vee$. Since $\dim (L \cap X_i) \leq 1$, the locus of hyperplanes containing at least one of $\{\mathbf{T}_y X_i\}_{y \in L \cap X_i}$ is of dimension at most $r-n$ in $(\nP^{r-i})^\vee$. On the other hand, the hyperplanes containing $L$ form a linear subspace of dimension $r-i-2 \geq r-n+1$. Hence we may pick $H_{i+1}$ which does not contain any of $\mathbf{T}_y X_i$. Then $X_{i+1} = X_i \cap H_{i+1}$ is smooth at the points of $X_{i+1} \cap L$.
%
\end{proof}

By the above lemma, we can always take a smooth surface section $S \subseteq \nP^{r-n+2}$ of $X \subseteq \nP^r$ such that $L$ is still contained in $\nP^{r-n+2}$. To attain a curve section $C \subseteq \nP^{r-n+1}$ such that $L$ is contained in $\nP^{r-n+1}$, we should sacrifice the smoothness when $L \subseteq S$. 

\begin{corollary}\label{cor: Vout separates tangents, reduction outcome}
Let $L$ be a line in $\nP^r$ passing through a point $x \in X$, and $C:=X \cap H_1 \cap \cdots \cap H_{n-1}$ for general hyperplanes $H_1, \ldots, H_{n-1} \subseteq \nP^r$ containing $L$.
\begin{enumerate}
    \item If $L \not\subseteq X$, then $C$ is smooth.
    \item If $L \subseteq X$, then $C$ is the union of $L$ and a smooth curve $C'$ with $x \not\in C'$.
\end{enumerate}
\end{corollary}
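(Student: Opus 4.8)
The plan is to reduce to the surface case and then analyze the single remaining hyperplane cut. By Lemma \ref{lem: smoothness of hyperplane sections} (and trivially when $n=2$, where no cutting is needed), after intersecting with general hyperplanes $H_1,\dots,H_{n-2}\supseteq L$ one obtains a smooth surface section $S:=X\cap H_1\cap\cdots\cap H_{n-2}\subseteq\nP^{r-n+2}$ with $L\subseteq\nP^{r-n+2}$; since $S\subseteq X$ and each $H_i\supseteq L$, we have $L\subseteq S$ precisely when $L\subseteq X$. It therefore suffices to study the last cut $C=S\cap H_{n-1}$ for a general hyperplane $H_{n-1}\supseteq L$, splitting into the two cases according to whether $L\subseteq S$.

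In case $(1)$, where $L\not\subseteq X$, we have $L\not\subseteq S$, so $L\cap S$ is finite. I would run exactly one more step of the inductive argument of Lemma \ref{lem: smoothness of hyperplane sections}: Bertini's theorem makes $C$ smooth away from the base locus $L\cap S$, and at each of the finitely many $p\in L\cap S$ smoothness of $C$ amounts to the transversality $\mathbf{T}_p S\not\subseteq H_{n-1}$. The hyperplanes of $\nP^{r-n+2}$ containing $L$ form a linear space of dimension $r-n$, whereas those also containing some $\mathbf{T}_p S$ form a finite union of linear spaces of dimension $r-n-1$. The point is that, unlike the step at which the lemma's induction halts, the $0$-dimensionality of $L\cap S$ lowers the bad locus by one dimension, so a general $H_{n-1}\supseteq L$ avoids it and $C$ is smooth everywhere.

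In case $(2)$, where $L\subseteq X$, we now have $L\subseteq S$, so $C=S\cap H_{n-1}$ contains $L$; write $C=L+C'$ with $C'\in|\sO_S(1)\otimes\sO_S(-L)|$. As $H_{n-1}$ ranges over the hyperplanes through $L$, these residual divisors sweep out a linear subsystem $\mathfrak{d}$, and the heart of the proof is to show $\mathfrak{d}$ is base-point-free. Off $L$ the base locus lies in $\bigcap_{H\supseteq L}(S\cap H)=L$, so the only issue is along $L$. The key local computation, in coordinates $(s,t)$ on $S$ with $L=\{t=0\}$, runs as follows: if $H=V(h)$ with $h|_S=t\cdot g$, then $C'=\{g=0\}$ locally, and for $p\in L$ one finds $g(p)=\partial(h|_S)/\partial t\,(p)$, so that $p\in C'$ if and only if $d(h|_S)_p=0$, i.e. $\mathbf{T}_p S\subseteq H$. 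Since $\mathbf{T}_p S$ is a $2$-plane strictly containing $L$, for every $p\in L$ there is an $H\supseteq L$ with $\mathbf{T}_p S\not\subseteq H$; hence no point of $L$ is a base point and $\mathfrak{d}$ is base-point-free. Bertini's theorem then gives a smooth general member $C'$, and because $x\in L$ is not a base point, a general $C'$ avoids $x$; base-point-freeness likewise shows $L$ occurs in $C$ with multiplicity one. This yields $C=L\cup C'$ with $C'$ smooth and $x\notin C'$.

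The main obstacle is the behavior along $L$ in case $(2)$: this is exactly where the total curve $C$ becomes singular (at $L\cap C'$), so one must argue that the residual curve $C'$ nonetheless stays smooth there and, crucially, steers clear of $x$. The identity $\{p\in C'\}=\{\mathbf{T}_p S\subseteq H\}$ is what converts both the base-point-freeness of $\mathfrak{d}$ and the condition $x\notin C'$ into a transparent dimension count over the space of hyperplanes through $L$. The secondary point, explaining why the final cut succeeds in case $(1)$ where the lemma's induction stops, is the drop in dimension of the bad locus caused by $\dim(L\cap S)=0$.
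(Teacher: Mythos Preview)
Your proof is correct and follows essentially the same approach as the paper: reduce to the smooth surface $S$ via Lemma~\ref{lem: smoothness of hyperplane sections}, handle case~(1) by the same dimension count on hyperplanes through $L$ versus those containing some $\nT_pS$, and in case~(2) show the residual linear subsystem of $\lvert\sO_S(1)\otimes\sO_S(-L)\rvert$ is base point free and invoke Bertini. The only difference is that the paper asserts base-point-freeness in one line via the restriction map $H^0(\nP^{r-n+2},\sI_{L|\nP^{r-n+2}}(1))\to H^0(S,\sI_{L|S}(1))$, whereas you unpack it with the local computation $p\in C'\Leftrightarrow\nT_pS\subseteq H$; your version is more explicit but the content is the same.
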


\begin{proof}
By Lemma \ref{lem: smoothness of hyperplane sections}, $S := X \cap H_1 \cap \cdots \cap H_{n-2}$ is a smooth projective surface. To prove (1), assume $L \not \subseteq S$. The hyperplanes in $\nP^{r-n+2} = H_1\cap \cdots \cap H_{n-2}$ containing at least one of $\{ \nT_y S \}_{y \in L \cap S}$ form a family of dimension at most $r-n-1$, while the hyperplanes in $\nP^{r-n+2}$ containing $L$ form a linear space of dimension $r-n$. Thus there exists a hyperplane $H_{n-1}$ not containing any of $\nT_y S$. This proves (1).
To prove (2), assume $L \subseteq S$. The image of the restriction map $H^0(\nP^{r-n+2}, \sI_{L|\nP^{r-n+2}}(1)) \rightarrow H^0(S, \sI_{L|S}(1))$ gives rise to a base point free linear subsystem of $\lvert \sO_{S}(1) \otimes \sO_{S}(-L) \rvert$. Then (2) follows from Bertini theorem.
\end{proof}

\begin{lemma}\label{lem:strangetangent}
Let $L$ be a projective tangent line to $X$ at a point $x \in X$. Then $\nT_y X \cap L = \emptyset$ for a general point $y \in X$.
\end{lemma}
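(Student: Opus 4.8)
The plan is to reinterpret the conclusion through the projection from $L$ and then invoke generic smoothness. For the projection $\pi_{L,X}\colon X \dashrightarrow \nP^{r-2}$ with center the line $L$, the differential at a point $y \notin L$ is injective precisely when $\nT_y X \cap L = \emptyset$; thus the assertion is equivalent to saying that $\pi_{L,X}$ is an immersion at a general point of $X$. Since $\{y \in X : \nT_y X \cap L \neq \emptyset\}$ is closed in $X$, it suffices to produce a single point where $\pi_{L,X}$ is unramified. As $e \geq 2$ gives $\dim \nP^{r-2} = r-2 \geq n$, once we know that $\pi_{L,X}$ is generically finite onto its ($n$-dimensional) image, generic smoothness in characteristic $0$ forces it to be generically \'etale, hence generically unramified, which is exactly what we want.

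So the whole problem reduces to showing that $\pi_{L,X}$ is generically finite, i.e.\ that $X$ is not swept out by the sections cut by $2$-planes through $L$. Suppose for contradiction that the general fibre $F_y := X \cap \langle L, y\rangle$ is positive-dimensional. First I would rule out $\dim F_y = 2$: this would force a family of $2$-planes through $L$ to lie on $X$, so $X$ would be covered by lines through $x$ and $\pi_{x,X}$ would fail to be generically finite, contradicting that the smooth non-degenerate $X$ is not a cone with vertex $x$. Hence $F_y$ is a plane curve. Because $x \in L \subseteq \langle L, y\rangle$ while $\nT_x X \cap \langle L, y\rangle = L$ for general $y$ (as $y \notin \nT_x X$), each $F_y$ is smooth at $x$ with tangent line exactly $L$; that is, every $F_y$ passes through $x$ and is tangent to $L$ there. (The case $L \subseteq X$ is handled in the same way after discarding the component $L$ from $F_y$.)

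The main obstacle is to turn this configuration into a contradiction. The key observation is that, projecting from $x$, the image $Y := \overline{\pi_{x,X}(X \setminus \{x\})} \subseteq \nP^{r-1}$ becomes a cone with vertex $v := \pi_{x,X}(L)$: each $F_y$ maps onto the line $\overline{\pi_{x,X}(\langle L,y\rangle)}$ through $v$, and these lines sweep out $Y$. Equivalently, on the blow-up $\sigma\colon \widetilde X \to X$ at $x$ with exceptional divisor $E = \nP(\nT_x X)$, all strict transforms $\widetilde F_y$ meet $E$ in the single point $[L]$ yet still cover $\widetilde X$. When $\pi_{x,X}$ is birational (i.e.\ $x \notin \sC(X)$), this can be finished cleanly: the induced $\widetilde X \to Y$ is then proper and birational with $[L]$ the only point over the cone vertex $v$, so by Zariski's Main Theorem $\widetilde X$ agrees with the normalization of $Y$ near $v$; since $\widetilde X$ is smooth, the normalized cone is smooth at its vertex, which forces $Y$ to be linear and contradicts the non-degeneracy of $Y$ in $\nP^{r-1}$. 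The delicate case, which I expect to be the genuine difficulty, is $x \in \sC(X)$, where $\pi_{x,X}$ has degree $\mu \geq 2$ and the Zariski's Main Theorem step breaks down; there one must instead use the explicit description of $X$ as a birational divisor on the conical scroll $\Cone(x,X)$ from Subsection~\ref{subsec:birdiv}, controlling the multiplicity of $Y$ at $v$ against $\mu$ to exclude the cone.
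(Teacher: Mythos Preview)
Your reformulation via the projection $\pi_{L,X}\colon X \dashrightarrow \nP^{r-2}$ is correct: the differential is injective at $y$ exactly when $\nT_yX\cap L=\emptyset$, and once $\pi_{L,X}$ is generically finite, generic smoothness in characteristic zero finishes the job. The trouble is that your proof of generic finiteness is not complete. In the case $x\notin\sC(X)$ you assert that $[L]\in E$ is the \emph{only} point of $\widetilde X$ lying over the cone vertex $v$, but any point of $(L\cap X)\setminus\{x\}$ also maps to $v$, so the Zariski's Main Theorem step does not go through as stated when $L\subseteq X$ or when $L$ meets $X$ elsewhere; your parenthetical remark about discarding the component $L$ does not repair this. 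More seriously, for $x\in\sC(X)$ you only sketch a plan (``control the multiplicity of $Y$ at $v$ against $\mu$'') without carrying it out, and the conical--scroll description you want to invoke from Subsection~\ref{subsec:birdiv} is only available when $\sC(X)$ is finite, i.e.\ when $X$ is not a Roth variety, so further case analysis would still be needed. In effect, proving generic finiteness of $\pi_{L,X}$ is no easier than the original statement.

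The paper avoids all of this by a quite different and much shorter route. Using Corollary~\ref{cor: Vout separates tangents, reduction outcome} it passes to a general curve section $C\subseteq\nP^{e+1}$ with $\nT_xC=L$, and then projects from a single general point $p\in L$ (not from the whole line). If $\nT_yX\cap L\neq\emptyset$ for every $y$, then every projective tangent line of the image curve $\overline C\subseteq\nP^{e}$ passes through the image of $L$, so $\overline C$ is a strange curve; in characteristic zero the only strange curve is a line, which contradicts non-degeneracy. This argument is uniform in $x$: it makes no distinction between $x\in\sC(X)$ and $x\notin\sC(X)$, uses neither Zariski's Main Theorem nor the conical--scroll structure, and treats the case $L\subseteq X$ simply by applying the strange--curve step to the smooth component $C'$.
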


\begin{proof}
To derive a contradiction, suppose that $\nT_y X \cap L \neq \emptyset$ for every $y \in X$. Consider a general curve section $C \subseteq \nP^{e+1}$ of $X \subseteq \nP^r$ such that $\nT_x C = L$. By Corollary \ref{cor: Vout separates tangents, reduction outcome}, $C$ is smooth when $L \not\subseteq X$ or $C=L \cup C'$ such that $C'$ is smooth with $x \not\in C'$ when $L \subseteq X$. Consider the projection $\pi_{p,C} \colon C \dashrightarrow \overline{C} \subseteq \nP^e$ centered at a general point $p \in L$. Notice that all projective tangent lines to $\overline{C}$ pass through one point, i.e, $\overline{C} \subseteq \nP^e$ is a strange curve. However, there is no strange curve other than a line in characteristic zero, so we get a contradiction.
\end{proof}

We are ready to prove Theorem \ref{thm:D_outva} $(1)$. 

\begin{proof}[Proof of Theorem \ref{thm:D_outva} $(1)$]
First, consider the case when $\sC(X) \neq \emptyset$ and $x \in \sC(X)$. Let 
$\pi_{\Lambda, X} \colon X \rightarrow \overline{X} \subseteq \nP^{n+1}$ be an outer projection centered at an $(e-2)$-dimensional linear subspace $\Lambda$ in $\nP^r$ such that  $D_{\out}(\pi_{\Lambda, X})$ is defined and $x \in \Supp(D_{\out}(\pi_{\Lambda, X}))$.  
We claim that 
$$
\mult_x D_{\out}(\pi_{\Lambda, X}) \geq 2,
$$
which implies that $S_{\out}$ does not separate any tangent directions at $x$. For a general $(e+1)$-dimensional linear subspace $M$ in $\nP^r$ containing $\Lambda$ and $x$, consider the curve section
$$
C:=X \cap M \subseteq M = \nP^{e+1}
$$ 
of $X \subseteq \nP^r$. By Bertini theorem, $C$ is smooth. Note that 
$$
D_{\out}(\pi_{\Lambda, X})|_C = D_{\out}(\pi_{\Lambda, C})~~\text{and}~~\mult_x D_{\out}(\pi_{\Lambda, X}) = \mult_x D_{\out}(\pi_{\Lambda, C}).
$$
Since $\pi_{\Lambda,C}^{-1}(\pi_{\Lambda, C}(x)) = C \cap \langle \Lambda, x \rangle$ and $\length( C \cap \langle \Lambda, x \rangle) \geq 3$,  it follows that  $\mult_{\pi_{\Lambda, C}(x)} \overline{C} \geq 3$.
Now, take a log resolution $f \colon S \to \nP^2$ of $(\nP^2, \overline{C})$, and write $f^*\overline{C}=C + F$ for some $f$-exceptional divisor $F$. Then
$$
-(K_{S/\nP^2}-F)|_C = -\big((K_S+C)-f^*(K_{\nP^2}+\overline{C})\big)|_C = -K_C + f^* K_{\overline{C}}=D_{\out}(\pi_{\Lambda, C}).
$$ 
As $\mult_{\pi_{\Lambda, C}(x)} \overline{C} \geq 3$, we obtain $\mult_x D_{\out}(\pi_{\Lambda,C})  \geq 2$.

\medskip

Next, consider the case when $x \in X \setminus \sC(X)$. For any projective tangent line $L$ to $X$ at $x$, we show that there is a section in $S_{\out}$ separating the tangent direction at $x$ corresponding to $L$. By Lemma \ref{lem:strangetangent}, $\nT_y X \cap L = \emptyset$ for a general point $y \in X$. As $x \not\in \sC(X)$, we may assume that $X \cap \langle x,y \rangle = \{x,y\}$ as a scheme. Let $H$ be a general hyperplane in $\nP^r$ such that $\nT_y X \subseteq H, ~x \in H$, and $L \not\subseteq H$. We can take a general $(e-2)$-dimensional linear subspace $\Lambda$ in $H$ such that $\Lambda \cap \nT_y X = \emptyset$, $\Lambda \cap \langle x,y \rangle = \Lambda \cap \Cone(x,X)= \{p\}$
as a scheme for a point $p \not\in X$, and $\Lambda \cap \Cone(q, X) = \emptyset$ for a general point $q \in X$. In particular, $\Lambda \cap X = \emptyset$. Note that $\Lambda \cap \nT_x X \subseteq \Lambda \cap \Cone(x, X)= \{ p \}$ and $p \in \langle x,y \rangle$. If $\Lambda \cap \nT_x X = \{ p \}$, then $\langle x,p \rangle = \langle x,y \rangle \subseteq \nT_x X$, which is a contradiction. Thus $\Lambda \cap \nT_x X = \emptyset$. Suppose that there is a point $u \in \langle \Lambda, x \rangle \cap X$ with $u \neq x,y$. Since $X \cap \langle x,y \rangle = \{x,y\}$, it follows that $u \not\in \langle x,y \rangle$. Then $\langle x,u \rangle$ intersects $\Lambda$ at some point $v \neq p$ so that $v \in \Lambda \cap \Cone(x,X)$, which is a contradiction to that $\Lambda \cap \Cone(x,X) = \{ p \}$. Thus $\langle \Lambda, x \rangle \cap X = \{x,y\}$ as a set. However, $\Lambda \cap \nT_x X = \emptyset$ and $\Lambda \cap \nT_y X = \emptyset$, so $\langle \Lambda, x \rangle \cap X = \{x,y\}$ as a scheme. Now, as $\Lambda \cap \Cone(p,X)=\emptyset$ for a general point $p \in X$, the outer projection 
$$
\pi_{\Lambda, X} \colon X \longrightarrow \overline{X} \subseteq \nP^{n+1}
$$
centered at $\Lambda$ is finite birational and hence $D_{\out}(\pi_{\Lambda,X})$ is defined. We have
$$
\pi_{\Lambda, X}^{-1}(\pi_{\Lambda, X}(x)) = \langle \Lambda, x \rangle \cap X = \{ x,y\}.
$$
Notice that $L \not\subseteq \langle \Lambda, \nT_y X \rangle$ since $\langle \Lambda, \nT_y X \rangle \subseteq H$ but $L \not\subseteq H$. Thus $\pi_{\Lambda, X}(L) \not\subseteq \pi_{\Lambda, X}(\nT_y X)$. As $\pi_{\Lambda, X}(L) \subseteq \pi_{\Lambda, X}(\nT_x X)$, we see that $\pi_{\Lambda, X}(\nT_x X)$ and $\pi_{\Lambda, X}(\nT_y X)$ meet transversally. Note that 
$$
\pi_{\Lambda, X}(L) \not\subseteq 
 \pi_{\Lambda, X}(\nT_x X) \cap \pi_{\Lambda, X}(\nT_y X) = \nT_{\pi_{\Lambda, X}(x)}\pi_{\Lambda, X}(D_{\out}(\pi_{\Lambda, X})).
$$
Hence  $L \not\subseteq \nT_x D_{\out}(\pi_{\Lambda, X})$. This means the section of $D_{\out}(\pi_{\Lambda, X})$ separates the tangent direction at $x$ corresponding to $L$.
\end{proof}

\begin{remark}\label{rem:Ilic'sproof}
Suppose that $X \subseteq \nP^r$ is a smooth projective curve.
In this case, it was claimed in \cite[Proof of Proposition 2.7]{Il} that $S_{\out}$ separates tangent vectors at each point $x \in X$. This contradicts Theorem \ref{thm:D_outva} $(1)$. In fact, the proof in \cite{Il} overlooks the possibility that $\sC(X)$ could be nonempty (see Example \ref{ex:curvecase} $(1)$ for explicit examples of $\sC(X) \neq \emptyset$). However, the statement of \cite[Proposition 2.7]{Il} that $D_{\out}$ is very ample is quite correct because one can easily check that $\deg D_{\out} \geq 2g+1$ using Castelnuovo's genus bound (\ref{eq:Castelnuovo-big}) as was pointed out in \cite[page 1648]{Il}. Here we want to stress that the tangent separation problem of $V_{\out} $ rather than $D_{\out}$ is not easy at all even for the curve case. Most part of the next section is devoted to proving that $\lvert V_{\out} \rvert$ separates tangent vectors in the curve case. 
\end{remark}

\section{Very ampleness of $\lvert V_{\out} \rvert$}\label{sec:va2}

\subsection{Overall framework}\label{subsec: 4.1}

In this section, we prove Theorem~\ref{thm:D_outva}~$(2)$. More specifically, we prove that the linear system $\lvert V_{\out}\rvert$ separates tangents vectors at each point $x \in X$. This implies that $\lvert V_{\out}\rvert$ is very ample when $X \subseteq \nP^r$ is not a Roth variety since $\lvert V_{\out}\rvert$ separates distinct two points by \cite[Theorem 4.2]{Il}.
Let $\Lambda^{e-2} \subseteq \nP^r$ be a general linear subspace of codimension $r-(e-2) = n+2$ from which the linear projection induces a finite birational morphism $\pi_{\Lambda,X} \colon X \to \nP^{n+1}$. By definition, $\lvert V_{\out} \rvert$ is the linear subsystem of $\lvert D_{\out} \rvert = \lvert -K_X + (d-n-2)H_X \rvert$ spanned by the divisors $D_{\out}(\pi_{\Lambda,X})$, where $D_{\out}(\pi_{\Lambda,X})$ is the double point divisor of $\pi_{\Lambda,X}$. By Theorem~\ref{thm:D_outva}~$(1)$, $D_{\out}(\pi_{\Lambda,X})$ is always singular at $x$ whenever it passes through $x$. Thus to prove Theorem~\ref{thm:D_outva}~$(2)$, it is necessary to manipulate the linear combinations of geometric sections in the vector space $H^0(X,D_{\out})$.
\medskip
\par One of the key ingredients is the identity \eqref{eq: Mumford identity, abstract form} by Mumford (\cite[Technical Appendix 4]{BM}). For the reader's convenience, we recall the notations here. Let $\Lambda'^{e-1} \subseteq \nP^r$ be a linear subspace of codimension $r-(e-1) = n+1$ which does not intersect $X$. The linear projection from $\Lambda'$ induces a dominant morphism $\pi_{\Lambda',X} \colon X \to \nP^n$. Let $\Ram(\pi_{\Lambda',X})$ be the ramification divisor of $\pi_{\Lambda',X}$. Let $\Lambda \subseteq \Lambda'$ be a linear subspace of codimension one such that the projection from $\Lambda$ induces a finite birational morphism $\pi_{\Lambda,X} \colon X \to \overline{X}_{\Lambda} \subseteq \nP^{n+1}$. The morphism $\pi_{\Lambda',X} \colon X \to \nP^n$ factors as
\[
X \xto{\pi_{\Lambda,X}} \overline{X}_\Lambda \xto{ \pi_{R,\overline{X}_{\Lambda}} } \nP^n,
\]
where the point $R \in \nP^{n+1}$ is the projection image of $\Lambda' \setminus \Lambda$. Fix a homogeneous coordinates $[Z_0,\dots,Z_{n+1}]$ of $\nP^{n+1}$ and let $f_\Lambda \in \nC[\nP^{n+1}] = \nC[Z_0,\dots,Z_{n+1}]$ be the defining equation of $\overline{X}_{\Lambda}$. For a point $Q = [q_0,\dots,q_{n+1}] \in \nP^{n+1}$, define the differential operator $\pd_Q$ as follows:
\[
    \pd_Q := Q \cdot \nabla = q_0 \frac{\pd}{\pd Z_0} + q_1 \frac{\pd}{\pd Z_1} + \dots + q_{n+1} \frac{\pd}{\pd Z_{n+1}}.
\]
We remark that $\pd_Q$ is defined only up to multiplications by nonzero constants. %
The projection $\pi_{\Lambda,\nP^r} \colon \nP^r \dashrightarrow \nP^{n+1}$ induces $\pi_{\Lambda,\nP^r}^* \colon \nC[\nP^{n+1}] \hookrightarrow \nC[\nP^r]$, and it maps $\pd_R f_{\Lambda}$ to the polynomial $F_{\Lambda,\Lambda'} := \pi_{\Lambda,\nP^r}^*( \pd_R f_{\Lambda})$. Now, the identity \eqref{eq: Mumford identity, abstract form} says that the divisor $\Ram(\pi_{\Lambda',X}) + D_{\out}(\pi_{\Lambda,X})$ on $X$ equals to the scheme-theoretic intersection $V(F_{\Lambda,\Lambda'}) \cap X$.

\medskip\par
To obtain a linear combination of geometric sections, we take codimension one linear subspaces $\Lambda_1,\Lambda_2 \subseteq \Lambda'$ and consider linear combination of $a_1 F_{\Lambda_1,\Lambda'} - a_2 F_{\Lambda_2,\Lambda'}$ for $a_1, a_2 \in \nC$. The hypersurface  $V( a_1 F_{\Lambda_1,\Lambda'} - a_2 F_{\Lambda_2,\Lambda'} ) \cap X$ in $X$ is the divisor $\Ram(\pi_{\Lambda', X} ) + D$ on $X$, where $D \in \lvert V_{\out}\rvert$. The proof of Theorem~\ref{thm:D_outva}~$(2)$ consists of the following steps:
\begin{enumerate}
    \item fix a point $x$ and a projective tangent line $L \subseteq \nT_x X$;
    \item take $\Lambda_1,\Lambda_2 \subseteq \Lambda'$ such that $x \not\in V( F_{\Lambda_i,\Lambda} )$ for $i=1,2$;
    \item find suitable $a_1,a_2 \in \nC$ such that $V( a_1 F_{\Lambda_1,\Lambda'} - a_2 F_{\Lambda_2,\Lambda'} )$ contains $x$ but is not tangent to $L$ at $x$.
\end{enumerate}

\medskip\par
Even though the purpose of each step appears simple and straightforward, verifying it requires rather involved procedure. The initial step is to reduce the dimension and codimension, which allows us to assume $X^n \subseteq \nP^r$ is a space curve\,(namely, $(n,r)=(1,3)$).

%

\subsection{Reduction to space curves}\label{subsec:reduction}
We fix $x \in X$ and a tangent line $L \subseteq \nT_x X$. We will reduce $\dim X$ to $1$ by taking hyperplane sections containing $L$. If $n=\dim X \geq 3$, then we can choose a general smooth surface section $S \subseteq \nP^r$ with $L \subseteq \nT_x S$ by Lemma \ref{lem: smoothness of hyperplane sections}. Thus we may reduce the problem to the surface case. If $L \not\subseteq S$, then a general curve section $C \subseteq \nP^r$ with $L =\nT_x C$ is still smooth. However, if $L \subseteq S$, then a general curve section $C \subseteq \nP^r$ with $L =\nT_x C$ is the union of $L$ and a smooth projective curve $C'$ with $x \not\in C'$ (see Corollary \ref{cor: Vout separates tangents, reduction outcome}).

\medskip
To justify the reduction procedure, we also need to compare $V_{\out}$ of $X$ with that of its hyperplane sections.
\begin{proposition}\label{prop: Vout v.a. - reduction to hyperplane sections}
Notations $X, \Lambda, \Lambda', R, f_{\Lambda}, F_{\Lambda, \Lambda'}$ as in Subsection~\ref{subsec: 4.1}. Let $H \subseteq \nP^r$ be a hyperplane and $H_X := X \cap H$. We assume further that $\Lambda' \subseteq H \setminus X$ and that $\pi_{\Lambda, H_X} \colon H_X \to \nP^n$ is finite birational. Let $R_H\in \nP^n$ be the projection of $\Lambda'$ under $\pi_{\Lambda,H} \colon H \dashrightarrow \nP^n$. For the polynomial $f_{\Lambda,H} \in \nC[\nP^n]$ defining $\pi_{\Lambda,H_X}(H_X)$, we associate $F_{\Lambda,\Lambda',H} \in \nC[H]$ with $\pd_{R_H} f_{\Lambda,H}$ via the inclusion $\pi_{\Lambda,H}^* \colon \nC[\nP^n] \hookrightarrow \nC[H]$. Then we have
\[
    V(F_{\Lambda,\Lambda',H}) = V(F_{\Lambda,\Lambda'}) \cap H.
\]
\end{proposition}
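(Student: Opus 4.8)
The plan is to choose homogeneous coordinates adapted to $\Lambda$ and $H$ simultaneously, so that both projections become coordinate projections and the asserted identity collapses to the elementary fact that a directional derivative along a direction \emph{contained in} a hyperplane commutes with restriction to that hyperplane. Concretely, I would pick coordinates $Z_0,\dots,Z_{n+1},W_1,\dots,W_{e-1}$ on $\nP^r$ with $\Lambda = V(Z_0,\dots,Z_{n+1})$, so that $\pi_{\Lambda,\nP^r}$ is the coordinate projection $[Z:W]\mapsto[Z]$ and $\pi_{\Lambda,\nP^r}^*$ is the inclusion $\nC[Z_0,\dots,Z_{n+1}]\hookrightarrow\nC[Z_0,\dots,Z_{n+1},W_1,\dots,W_{e-1}]$. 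Since $\Lambda\subseteq\Lambda'\subseteq H$, the hyperplane $H$ contains $\Lambda$, so its defining linear form lies in $\langle Z_0,\dots,Z_{n+1}\rangle$; after rotating the $Z$-coordinates I may assume $H=V(Z_{n+1})$. Then $\overline{H}:=\pi_{\Lambda,\nP^r}(H)=V(Z_{n+1})\subseteq\nP^{n+1}$ is exactly the target $\nP^n$ of $\pi_{\Lambda,H}$, with coordinates $Z_0,\dots,Z_n$, and $\pi_{\Lambda,H}$ is the restriction of $\pi_{\Lambda,\nP^r}$ to $H$.

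Next I would record two facts. Because $\Lambda'\subseteq H=V(Z_{n+1})$, every point of $\Lambda'\setminus\Lambda$ has $Z_{n+1}=0$, hence $R=\pi_{\Lambda,\nP^r}(\Lambda'\setminus\Lambda)=[q_0:\dots:q_n:0]$, i.e. $q_{n+1}=0$; the same computation identifies $R_H$ with $R$ under $\overline{H}=V(Z_{n+1})$. Moreover, since $\Lambda\cap X=\emptyset$ the finite morphism $\pi_{\Lambda,X}$ gives $\pi_{\Lambda,H_X}(H_X)=\overline{X}_{\Lambda}\cap\overline{H}$, and as $\pi_{\Lambda,H_X}$ is finite birational this image is an integral hypersurface of degree $d$ in $\overline{H}\cong\nP^n$. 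Consequently the two degree-$d$ equations $f_{\Lambda,H}$ and $f_{\Lambda}|_{\overline{H}}=f_{\Lambda}(Z_0,\dots,Z_n,0)$ of this hypersurface agree up to a nonzero scalar $c$.

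Finally, since $q_{n+1}=0$ the operator $\pd_R=\sum_{i=0}^{n}q_i\,\partial/\partial Z_i$ involves no $\partial/\partial Z_{n+1}$, so it commutes with the substitution $Z_{n+1}=0$:
\[
\pd_R\big(f_{\Lambda}|_{\overline{H}}\big)=(\pd_R f_{\Lambda})|_{\overline{H}}.
\]
Unwinding the definitions through the coordinate descriptions of $\pi_{\Lambda,H}^*$ (inclusion of the $Z$-ring) and of restriction to $H$ (substitution $Z_{n+1}=0$), and using $R_H=R$ together with $f_{\Lambda,H}=c\,f_{\Lambda}|_{\overline{H}}$, this yields $F_{\Lambda,\Lambda',H}=c\cdot F_{\Lambda,\Lambda'}|_{H}$ in $\nC[H]$. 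Since the two forms differ by a nonzero scalar, their zero loci in $H$ coincide, giving $V(F_{\Lambda,\Lambda',H})=V(F_{\Lambda,\Lambda'})\cap H$.

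I expect the only delicate point to be the identification $f_{\Lambda,H}=c\,f_{\Lambda}|_{\overline{H}}$, which requires verifying that $\pi_{\Lambda,H_X}(H_X)=\overline{X}_{\Lambda}\cap\overline{H}$ is integral of the expected degree $d$ so that its defining equation is unique up to scalar. The conceptual heart, by contrast, is the simple observation that the hypothesis $\Lambda'\subseteq H$ forces the differentiation direction $R$ into $\overline{H}$ (equivalently $q_{n+1}=0$), which is precisely what makes $\pd_R$ commute with restriction to $\overline{H}$; without $\Lambda'\subseteq H$ the identity would fail.
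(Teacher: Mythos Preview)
Your proof is correct and follows essentially the same approach as the paper: choose coordinates adapted to $\Lambda$ and $H$, observe that $\Lambda'\subseteq H$ forces $R$ into the hyperplane $\overline{H}\subseteq\nP^{n+1}$, and then use that the directional derivative $\partial_R$ commutes with restriction to $\overline{H}$. The paper goes one step further and normalizes $\Lambda'$ in coordinates so that $R=[1,0,\dots,0]$ and $\partial_R=\partial/\partial Z_0$, whereas you keep $R=[q_0:\cdots:q_n:0]$ general; conversely you are a bit more explicit than the paper about the scalar $c$ relating $f_{\Lambda,H}$ to $f_\Lambda|_{\overline{H}}$ and about why $\pi_{\Lambda,H_X}(H_X)=\overline{X}_\Lambda\cap\overline{H}$ is integral of degree $d$---but these are cosmetic differences, not substantive ones.
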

\begin{proof}
Let $[Z_0,\dots,Z_r]$ be the homogeneous coordinates for $\nP^r$. Assume $H = V(Z_r)$, $\Lambda' = V(Z_1,Z_2,\dots,Z_n,Z_r)$, and $\Lambda = V(Z_0,Z_1,\dots,Z_n, Z_r)$. The target of the projection $\pi_{\Lambda, X}$ is $\nP^{n+1}$ with the homogeneous coordinates $[Z_0,Z_1,\dots,Z_n,Z_r]$, and the morphism $\pi_{\Lambda,H_X} \colon H_X \to \nP^n$ is simply the restriction of $\pi_{\Lambda,X}$ to $H = V(Z_r)$. It follows that $\pi_{\Lambda, H_X}(H_X) = \pi_{\Lambda,X}(X)\big\vert_H$, or equivalently,
\[
    f_{\Lambda,H}(Z_0,Z_1,\dots,Z_n) = f_{\Lambda}(Z_0,Z_1,\dots,Z_n,0).
\]
By descriptions of $\Lambda'$ and $\Lambda$, the coordinates of $R = \pi_{\Lambda, \nP^r}(\Lambda')$ is $[1,0,\dots,0] \in \nP^{n+1}$. Similarly, the coordinates of $R_H$ is $[1,0,\dots,0] \in \nP^n$. This shows that both $\pd_R$ and $\pd_{R_H}$ are $\pd / \pd Z_0$. It follows that $\pd_{R_H} f_{\Lambda,H} (Z_0,\dots,Z_n) = \pd_{R} f_{\Lambda}(Z_0,\dots,Z_n,0)$ holds, and consequently,
\[
    F_{\Lambda,\Lambda',H}(Z_0,Z_1,\dots,Z_{r-1}) = F_{\Lambda,\Lambda'}(Z_0,Z_1,\dots,Z_{r-1},0).
\]
This proves the proposition.
\end{proof}

Proposition~\ref{prop: Vout v.a. - reduction to hyperplane sections} implies the following. If $H$ is a hyperplane containing $L$ and if $V( a_1 F_{\Lambda_1,\Lambda',H} - a_2 F_{\Lambda_2,\Lambda',H} )$ intersects $L$ transversally at $x$, then so does $V( a_1 F_{\Lambda_1,\Lambda'} - a_2 F_{\Lambda_2,\Lambda'})$. This justifies our reduction procedure. We want to emphasize that the smoothness of $H_X$ is not necessary here. In particular, the reduction procedure works until the last step at which $H_X$ is a (possibly reducible) curve. This is one of the main reasons why we are working with the polynomials $F_{\Lambda,\Lambda'}$ instead of the divisor $\Ram(\pi_{\Lambda',X})+ D_{\out}(\pi_{\Lambda,X})$.

\medskip
\par So far we have reduced the dimension of $X$ to one. To have a space curve, we need to reduce to $e=2$. We use outer projections to achieve it.

\begin{proposition}\label{prop: Vout v.a. - reduction to outer projection}
Notations $X, \Lambda, \Lambda', R, f_{\Lambda}, F_{\Lambda, \Lambda'}$ as in Subsection~\ref{subsec: 4.1}. Assume further that $e \geq 3$\,(so that $\dim \Lambda = e-2 \geq 1$). For a point $T \in \Lambda \setminus X$, let $\pi_{T,X} \colon X \to \nP^{r-1}$ be the projection morphism. Let $X_T = \pi_{T,X}(X)$ and let $\Lambda_T,\, \Lambda'_T \subseteq \nP^{r-1}$ be the projection images of $\Lambda,\, \Lambda'$ under $\pi_{T,\nP^r} \colon \nP^r \dashrightarrow \nP^{r-1}$ respectively. For the inclusion morphism $\pi_{T,\nP^r}^* \colon \nC[ \nP^{r-1}] \hookrightarrow \nC[\nP^r]$, the following holds:
\[
    \pi_{T,\nP^r}^* ( F_{\Lambda_T,\Lambda_T'} ) = F_{\Lambda,\Lambda'}.
\]
\end{proposition}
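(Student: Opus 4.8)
The backbone of the argument is that projection centers compose. Since $T\in\Lambda$ one has $\Lambda=\langle T,\Lambda_T\rangle$ inside $\nP^r$, where $\Lambda_T=\pi_{T,\nP^r}(\Lambda)$, so the projection from $\Lambda$ factors through the projection from $T$:
\[
\pi_{\Lambda,\nP^r}=\pi_{\Lambda_T,\nP^{r-1}}\circ\pi_{T,\nP^r},
\qquad
\pi_{\Lambda',\nP^r}=\pi_{\Lambda'_T,\nP^{r-1}}\circ\pi_{T,\nP^r}.
\]
First I would record this factorization and its contravariant consequence on coordinate rings, namely $\pi_{\Lambda,\nP^r}^*=\pi_{T,\nP^r}^*\circ\pi_{\Lambda_T,\nP^{r-1}}^*$. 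This is the only structural input; the rest of the proof is checking that the two pieces of data defining $F_{\Lambda,\Lambda'}$ — the hypersurface equation $f_\Lambda$ and the differentiation direction $R$ — are matched on the nose by the reduced configuration $(\Lambda_T\subseteq\Lambda'_T,\,X_T)$.

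Next I would verify that the reduced triple inherits the standing hypotheses, so that $F_{\Lambda_T,\Lambda'_T}$ is even defined. Because $\Lambda\cap X=\emptyset$, the point $T$ automatically lies off $X$, hence $\pi_{T,X}$ is finite; and since $\pi_{\Lambda,X}=\pi_{\Lambda_T,X_T}\circ\pi_{T,X}$ is finite birational while degrees multiply along a composition of generically finite maps onto $n$-dimensional images, both $\pi_{T,X}$ and $\pi_{\Lambda_T,X_T}$ are themselves finite birational. In particular $\overline{(X_T)}_{\Lambda_T}=\pi_{\Lambda_T,\nP^{r-1}}(X_T)=\pi_{\Lambda,\nP^r}(X)=\overline{X}_\Lambda$ is the very same hypersurface in the very same $\nP^{n+1}$, so I may take $f_{\Lambda_T}=f_\Lambda$. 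For the directions, the factorization gives $\pi_{\Lambda_T,\nP^{r-1}}(\Lambda'_T)=\pi_{\Lambda,\nP^r}(\Lambda')$, whence $R_T=R$ as points of $\nP^{n+1}$ and therefore $\pd_{R_T}=\pd_R$ as differential operators. Note that smoothness of $X$ is never used.

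Granting these identities, the conclusion is immediate from the contravariant composition:
\[
\pi_{T,\nP^r}^*\big(F_{\Lambda_T,\Lambda'_T}\big)
=\pi_{T,\nP^r}^*\big(\pi_{\Lambda_T,\nP^{r-1}}^*(\pd_{R}f_\Lambda)\big)
=\pi_{\Lambda,\nP^r}^*(\pd_R f_\Lambda)
=F_{\Lambda,\Lambda'}.
\]
To neutralize the scalar ambiguity in $\pd_Q$ and keep the bookkeeping transparent, I would run the whole check in explicit coordinates, exactly in the spirit of the proof of Proposition~\ref{prop: Vout v.a. - reduction to hyperplane sections}: choose $[Z_0,\dots,Z_r]$ so that $T=[0:\cdots:0:1]$ and $\pi_{T,\nP^r}$ forgets $Z_r$, with $\Lambda=V(Z_0,\dots,Z_{n+1})$ and $\Lambda'=V(Z_0,\dots,Z_n)$. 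Then $R=R_T=[0:\cdots:0:1]\in\nP^{n+1}$, both $\pd_R$ and $\pd_{R_T}$ are $\pd/\pd Z_{n+1}$, and the asserted identity collapses to the observation that passing from $\nC[Z_0,\dots,Z_{r-1}]$ to $\nC[Z_0,\dots,Z_r]$ merely re-reads a polynomial that does not involve $Z_r$.

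I expect no serious obstacle: the statement is essentially bookkeeping built on the single factorization above. The only point demanding genuine care is the well-definedness check in the second step — confirming that $(\Lambda_T,\Lambda'_T,X_T)$ inherits the finite-birational hypothesis, and that the common target $\nP^{n+1}$, the equation $f_{\Lambda_T}=f_\Lambda$, and the point $R_T=R$ are matched with consistent coordinates rather than merely up to a projective change. Honest (not merely proportional) equality is exactly what is needed so that, downstream, the linear combinations $a_1F_{\Lambda_1,\Lambda'}-a_2F_{\Lambda_2,\Lambda'}$ in the overall strategy reduce faithfully under outer projection.
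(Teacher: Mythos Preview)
Your proposal is correct and follows exactly the paper's approach: the factorization $\pi_{\Lambda,\nP^r}=\pi_{\Lambda_T,\nP^{r-1}}\circ\pi_{T,\nP^r}$ yields $\pi_{\Lambda,\nP^r}^*=\pi_{T,\nP^r}^*\circ\pi_{\Lambda_T,\nP^{r-1}}^*$, and together with $f_{\Lambda_T}=f_\Lambda$, $R_T=R$ this gives the identity in one line. The paper's proof is just your displayed equation; your additional well-definedness checks and coordinate verification are helpful elaborations but not a different route.
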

\begin{proof}
Since $\pi_{\Lambda,\nP^r} = \pi_{\Lambda_T,\nP^{r-1}} \circ \pi_{T,\nP^r}$, we have
\[
    F_{\Lambda,\Lambda'} = \pi_{\Lambda,\nP^r}^*( \pd_R f_{\Lambda}) = \pi_{T,\nP^r}^* (\pi_{\Lambda_T,\nP^{r-1}}^* (\pd_R f_{\Lambda})) = \pi_{T,\nP^r}^* F_{\Lambda_T,\Lambda_T'}. \qedhere
\]
\end{proof}

From Proposition~\ref{prop: Vout v.a. - reduction to outer projection}, it is immediate to see that if $T \in \Lambda \setminus (X \cup L)$ and if $V( a_1 F_{(\Lambda_1)_T,\Lambda'_T} - a_2 F_{(\Lambda_2)_T,\Lambda_T'} ) \subseteq \nP^{r-1}$ intersects $L$ trasversally at $x$, then so does $V(a_1 F_{\Lambda_1,\Lambda'} - a_2 F_{\Lambda_2,\Lambda'})$.

\medskip\par
In summary, to prove Theorem~\ref{thm:D_outva}~$(2)$, it suffices to consider the following two cases: 
\begin{enumerate}
    \item $X \subseteq \nP^3$ is smooth curve and $x \in X$, or
    \item $X$ is a union of $\nT_x X$ and a smooth curve $C' \not\ni x$.
\end{enumerate}
As we mentioned in Remark \ref{rem:Ilic'sproof}, in the case $(1)$, it was known that $D_{\out}$ is very ample, but it was not known that $\lvert V_{\out} \rvert$ is very ample.

\subsection{Separation of tangents}

From now on, we assume $X \subseteq \nP^3$ is either a smooth curve or a union of $\nT_x X$ and a smooth curve $C'\not\ni x$. Let $\Lambda' \subseteq \nP^3 \setminus X$ be a linear space of dimension $e-1 = 1$\,(i.e. a line) and let $P=\Lambda_1,\, Q=\Lambda_2 \in \Lambda'$ be distinct points. We need some generosity assumptions on the points $P$ and $Q$. 

\begin{definition}\label{def: birational centers and admissible pairs}
Let $x$ be a point of $X$. We call a point $P \in \nP^3$ a \emph{birational center of $(X,x)$} if it fulfills the following two conditions:
\begin{enumerate}
\item $P \not\in \sB(X) = \bigl\{ Q \in \nP^3 \setminus X \,\mid\, \op{length}( \langle P,Q \rangle \cap X ) \geq 2\ \text{for general }P 
\in X \bigr\}$.
\item $P \not\in \sC_x = \op{Cone}(x,X) =  \overline{\bigcup_{y \in X \setminus \{x\}} \langle x, y \rangle}$.
\end{enumerate}
In addition, a pair $P,Q$ of birational centers of $(X,x)$ is said to be \emph{admissible} if it satisfies the following two conditions:
\begin{enumerate}\setcounter{enumi}{2}
\item $\langle P,Q \rangle \cap X = \emptyset$.
\item $\langle P, Q \rangle \cap \mathbf{T}_x X = \emptyset$.
\end{enumerate}
\end{definition}

\begin{proposition}\label{prop: consequences of generosity assumptions}
Let $P$ be a birational center of $(X,x)$. Then, the projection morphism $\pi_{P,X} \colon X \to \nP^2$ is finite and birational. Moreover, $x \not\in D_{\out}(\pi_{P,X})$. If $P,Q$ is an admissible pair of birational centers of $(X,x)$, then the morphism $\pi_{\langle P,Q \rangle, X} \colon X \to \nP^1$ is finite and $x \not\in \Ram(\pi_{\langle P,Q \rangle,X})$.
\end{proposition}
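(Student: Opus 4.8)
The plan is to split the statement into two essentially independent parts, one for the point projection $\pi_{P,X}\colon X\to \nP^2$ and one for the line projection $\pi_{\langle P,Q\rangle,X}\colon X\to\nP^1$, and to check that each of the generosity conditions in Definition~\ref{def: birational centers and admissible pairs} produces exactly one of the desired conclusions. Throughout I would rely on the standard principle that the projection of a projective variety from a linear center disjoint from it is a finite morphism: a positive-dimensional fibre would be a positive-dimensional subvariety of some $\nP^m$, and such a subvariety meets every hyperplane of that $\nP^m$, in particular the center, contradicting disjointness. This principle applies verbatim whether $X$ is irreducible or the reducible union $\nT_x X\cup C'$.

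For the point projection, condition~(2) gives $P\notin\Cone(x,X)$, and since $X\subseteq\Cone(x,X)$ this forces $P\notin X$; hence $\pi_{P,X}$ is a morphism, and finite by the principle above. Condition~(1), $P\notin\sB(X)$, is by the very characterization of the outer Segre locus equivalent to $\pi_{P,X}$ being birational. Thus $\pi_{P,X}$ is finite and birational, so $D_{\out}(\pi_{P,X})$ is defined and its support is the non-isomorphic locus of $\pi_{P,X}$. Finally, by the criterion recalled earlier (an outer projection $\pi_{\Lambda,X}$ is an isomorphism in a neighborhood of $x$ if and only if $\Lambda\cap\Cone(x,X)=\emptyset$), taken here with $\Lambda=\{P\}$, condition~(2) says precisely that $\pi_{P,X}$ is a local isomorphism at $x$; hence $x$ lies off the non-isomorphic locus and $x\notin D_{\out}(\pi_{P,X})$.

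For an admissible pair $P,Q$, set $\ell=\langle P,Q\rangle$, which is a line since $P\neq Q$. Condition~(3) gives $\ell\cap X=\emptyset$, so $\pi_{\ell,X}\colon X\to\nP^1$ is a morphism, finite again by the principle above. For the ramification claim I would work with the differential: since $\ell\cap X=\emptyset$ we have $x\notin\ell$, so $\langle\ell,x\rangle$ is a plane, the fibre of $\pi_{\ell,X}$ through $x$ is $X\cap\langle\ell,x\rangle$, and the kernel of $d\pi_{\ell,X}$ at $x$ is the tangent space to this plane. Thus $\pi_{\ell,X}$ is unramified at $x$ exactly when $\nT_x X\not\subseteq\langle\ell,x\rangle$. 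Suppose instead $\nT_x X\subseteq\langle\ell,x\rangle$; then $\nT_x X$ and $\ell$ are two lines in the plane $\langle\ell,x\rangle$ and must meet, contradicting condition~(4), namely $\ell\cap\nT_x X=\emptyset$ (they are distinct, since $\nT_x X=\ell$ would give $x\in\ell$, against~(3)). Hence $\nT_x X\not\subseteq\langle\ell,x\rangle$ and $x\notin\Ram(\pi_{\langle P,Q\rangle,X})$.

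The arguments are short, and almost all the content lies in translating the four generosity conditions into incidence and transversality statements; the one step requiring genuine geometric care is the ramification criterion, where one identifies the kernel of the differential of projection-from-a-line with the tangent plane to $\langle\ell,x\rangle$ and then invokes that two lines in a plane must meet. I would also be careful not to conflate \emph{isomorphism near $x$} with \emph{birational}: a projection can be a local isomorphism at one point yet several-to-one elsewhere, which is precisely why condition~(1) is imposed separately from~(2) to secure birationality. The possibly reducible case $X=\nT_x X\cup C'$ needs no separate treatment, since $x$ is a smooth point of $X$ (lying only on the line component) and every assertion is either local at $x$ or depends only on $\ell\cap X=\emptyset$.
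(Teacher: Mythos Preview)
Your proof is correct and is precisely the detailed unpacking that the paper's own one-line proof (``The proposition immediately follows from the definition'') leaves to the reader; each of the four conditions in Definition~\ref{def: birational centers and admissible pairs} is translated into the corresponding geometric conclusion exactly as intended. The only imprecision is the phrase ``the kernel of $d\pi_{\ell,X}$ at $x$ is the tangent space to this plane'': what you mean is that the kernel of $d\pi_{\ell}$ on $\nP^3$ is that plane, so the kernel of $d\pi_{\ell,X}$ is $\nT_xX\cap\langle\ell,x\rangle$; since $X$ is a curve this is nonzero iff $\nT_xX\subseteq\langle\ell,x\rangle$, which is the statement you use in the next sentence, so the argument is unaffected.
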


\begin{proof}
The proposition immediately follows from the definition. 
\end{proof}

It is important to point out that an admissible pair always exists in our situation.
\begin{proposition}\label{prop: admissible pairs, existence}
An admissible pair of birational centers of $(X,x)$ exists.
\end{proposition}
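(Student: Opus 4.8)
The plan is to prove existence of an admissible pair by a dimension count, showing that each of the four defining conditions excludes only a proper (hence lower-dimensional) closed subset of the relevant parameter space, so that a generic choice survives. I would proceed in two stages: first establish that birational centers of $(X,x)$ form a dense open subset of $\nP^3$, then show that among pairs of such centers the admissibility conditions $(3)$ and $(4)$ are generically satisfied.

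\medskip

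First I would analyze the locus of birational centers. Condition $(1)$ excludes the outer Segre locus $\sB(X)$; since $X \subseteq \nP^3$ is non-degenerate with $e \geq 2$, the bound $\dim \sB(X) \leq \min\{n-1, \dim\Sing X + 1\}$ from Subsection~2.2 gives $\dim \sB(X) \leq \dim\Sing X + 1 \leq 1$ (here $\Sing X$ accounts for the node at $x$ in the reducible case $X = \nT_x X \cup C'$, so $\dim \Sing X \leq 0$, whence $\dim \sB(X) \leq 1$). Condition $(2)$ excludes the cone $\sC_x = \Cone(x,X)$, whose dimension is $\dim X + 1 = 2$. Thus the locus of \emph{non}-birational centers is contained in $\sB(X) \cup \sC_x$, a closed subset of dimension at most $2 < 3$, so birational centers form a dense open subset $U \subseteq \nP^3$.

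\medskip

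Next I would handle the pair conditions. Fix a birational center $P \in U$; I want to produce $Q \in U$ so that $\langle P, Q\rangle \cap X = \emptyset$ and $\langle P,Q\rangle \cap \nT_x X = \emptyset$. Consider the lines through $P$: condition $(3)$ fails exactly when $Q$ lies on some line through $P$ meeting $X$, i.e. when $Q \in \Cone(P, X) = \overline{\bigcup_{u \in X}\langle P, u\rangle}$, which is a surface (dimension $\dim X + 1 = 2$) since $P \notin X$. Condition $(4)$ fails when $Q$ lies on a line through $P$ meeting $\nT_x X$; since $\nT_x X$ is a line in $\nP^3$ and $P$ is a single point with $P \notin \nT_x X$ (because $P \notin \sC_x \supseteq \nT_x X$), the union $\Cone(P, \nT_x X)$ of such lines is a plane, again of dimension $2$. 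Therefore the set of admissible partners $Q$ for our fixed $P$ is the complement in $\nP^3$ of $\bigl(\nP^3 \setminus U\bigr) \cup \Cone(P, X) \cup \Cone(P, \nT_x X)$, a closed subset of dimension at most $2$, and this complement is nonempty. Choosing any such $Q$ yields an admissible pair.

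\medskip

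The main obstacle I anticipate is bookkeeping the dimension of $\sB(X)$ in the reducible case, where $X = \nT_x X \cup C'$ is singular precisely at the node $x$, so one must invoke the sharp form $\dim \sB(X) \leq \dim \Sing X + 1$ rather than the smooth bound. Once that estimate is in hand every exclusion is a proper closed subset of $\nP^3$ and the argument is purely a codimension count; no delicate geometry is needed beyond verifying that $\Cone(P,X)$ and $\Cone(P, \nT_x X)$ are genuinely two-dimensional, which follows since $P$ lies off both $X$ and $\nT_x X$.
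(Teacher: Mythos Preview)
Your overall strategy---showing each defining condition cuts out a proper closed subset so that a generic choice survives---matches the paper's, but there is a genuine gap in the reducible case $X = \nT_x X \cup C'$.

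First, a minor correction: you write that ``$\Sing X$ accounts for the node at $x$'', but $x$ is \emph{not} singular on $X$. Since $x \notin C'$, near $x$ the curve $X$ coincides with the smooth line $\nT_x X$; the singular locus of $X$ is instead the finite set $\nT_x X \cap C'$. This does not affect your dimension count (either way $\dim\Sing X = 0$), but it is worth getting right.

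The real issue is your invocation of the bound $\dim \sB(X) \leq \dim\Sing X + 1$ for the reducible curve $X$. That bound (Noma's \cite[Theorem~4]{N0}, recalled in Subsection~2.2) is stated for a projective \emph{variety}, i.e.\ an irreducible scheme; the paper's own proof conspicuously avoids applying it directly to the reducible $X$ and instead argues that $\sB(X) \subseteq \sB(C')$, then applies Noma's bound to the smooth irreducible curve $C'$ to get $\dim \sB(X) \leq 0$. The inclusion $\sB(X) \subseteq \sB(C')$ requires a short argument: if $Q \in \sB(X)$, then for general $P \in C'$ the secant line $\langle P, Q \rangle$ cannot meet $\nT_x X$ (otherwise $C'$ would lie in the plane $\langle Q, \nT_x X \rangle$, contradicting non-degeneracy of $X$), so the second point of $\langle P, Q \rangle \cap X$ must already lie on $C'$, whence $Q \in \sB(C')$. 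Once this inclusion is in place, the rest of your dimension count goes through unchanged. Your treatment of conditions $(2)$, $(3)$, $(4)$ is correct and essentially identical to the paper's.
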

\begin{proof}  
We need to consider two cases, that is, either $X$ is smooth or $X = \nT_xX \cup C'$ with $C'$ smooth and $x \not\in C'$. Suppose $X$ is smooth. The closure $\bar{\sB}(X)$ of $\sB (X)$ is a union of linear spaces of dimension at most $n-1 = 0$\,(see \cite[Theorem~1]{N2}), and $\dim \sC_x = 2$. Hence, $P \not\in \sB(x)$ and $P \not \in \sC_x$ for general $P$. This shows that the general point in $\nP^3$ is a birational center of $(X,x)$. Moreover, both $X$ and $\nT_xX$ are of codimension $2$ in $\nP^3$, a general line does not intersect neither $X$ nor $\nT_xX$. This shows that for general $P,Q$, we have $\langle P,Q \rangle \cap X = \langle P, Q \rangle \cap \nT_x X = \emptyset$.

\medskip\par
Now, assume $X = \nT_x X \cup C'$, and $Q \in \sB(X)$. For general $P \in C'$, the line $\langle P, Q\rangle $ does not intersect $\nT_xX$; otherwise, $C' \subseteq \langle Q, \nT_x X\rangle$, i.e. $X$ is degenerate. Hence, $\length(\langle P,Q \rangle \cap C' ) \geq 2$ for general $P \in C'$. This shows $\sB(X) \subseteq \sB(C')$. We invoke \cite[Theorem~1]{N2} to see that $\sB(X)$ is zero-dimensional. In particular, the condition (1) holds for general $P \in \nP^3$. To verfiy (2) for general $P$, we observe that $\sC_x = \op{Cone}(x,X) = \op{Cone}(x, \nT_xX) \cup \op{Cone}(x, C') = \nT_xX \cup \op{Cone}(x,C')$. Since $\dim sC_x = 2$, a general $P$ is not contained in $\sC_x$. Conditions (3) and (4) follows immediately from the dimension counting.
\end{proof}

From now on, we assume that $P,Q \in \Lambda'$ are admissible pair of birational centers of $(X,x)$. Using notations in Subsection~\ref{subsec: 4.1}, we have
\[
    \Ram(\pi_{\Lambda',X}) + D_{\out}(\pi_{P,X}) = V( F_{P,\Lambda'}) \cap X.
\]
Let $a_{P,\Lambda'}, \, a_{Q,\Lambda'} \in \nC$ be complex numbers such that
\[
    \frac{a_{P,\Lambda'}}{a_{Q,\Lambda'}} = \frac{ F_{P,\Lambda'}}{F_{Q,\Lambda'}}(x),
\]
regarding $F_{P,\Lambda'} / F_{Q,\Lambda'}$ as a rational function on $\nP^3$. If we put
\[
    F_{P,Q} := a_{P,\Lambda'} F_{Q,\Lambda'} - a_{Q,\Lambda'} F_{P,\Lambda'},
\]
then $V(F_{P,Q}) \cap X = \Ram(\pi_{\Lambda',X}) + D$, where $D \in \lvert V_{\out} \rvert$.

\medskip\par
We need to investigate when the divisor $D$ is tangent to $\mathbf{T}_x X$ at $x$. Parametrize $\mathbf{T}_x X$ by $\{x + t v\}_{t \in \nP^1}$, where $v \in \mathbf{T}_x X \setminus \{x \}$. Then,
\begin{align}
\begin{split}
\text{$D$ is tangent to $\mathbf{T}_x X$ at $x$}\ \ & \Longleftrightarrow  \  \ (v \cdot \nabla) F_{P,Q}(x) = 0  \label{eq: separated tangents 1} \\
&\Longleftrightarrow\  \ v \cdot \bigl( a_{P,\Lambda'} \nabla F_{Q,\Lambda'}- a_{Q,\Lambda'} \nabla F_{P,\Lambda'} \bigr)(x) = 0.
\end{split}
\end{align}
In above, the products are the standard inner product of vectors. The last condition in (\ref{eq: separated tangents 1}) can be reinterpreted as follows. Let $H_{P,\Lambda'} \subseteq \nP^3$ be the plane defined by the equation
\[
\sum_{i=0}^3 Z_i \frac{\pd F_{P,\Lambda'}}{\pd Z_i}(x) = 0.
\]
The plane $H_{P,\Lambda'}$ intersects $\mathbf{T}_x X$ at $x + t v$\,($t \in \nP^1$) if and only if
\[
\sum_{i=0}^3 (x_i+t v_i) \frac{\pd F_{P,\Lambda'}}{\pd Z_i}(x)  = (\deg F_{P,\Lambda'}) F_{P,\Lambda'}(x) + t (v \cdot \nabla) F_{P,\Lambda'}(x) = 0.
\]
Note that $F_{P,\Lambda'}(x)\neq 0$, so the above equation must have a unique solution $t = t_0$. Since $\deg F_{P,\Lambda'} = \deg X - 1 = \deg F_{Q,\Lambda'}$, the last condition in (\ref{eq: separated tangents 1}) is equivalent to $ H_{P,\Lambda'} \cap \nT_x X \neq H_{Q,\Lambda'} \cap \nT_x X$. Consequently, we have
\begin{equation}
\text{$D$ is not tangent to $\nT_x X$ at $x$}\ \ \Longleftrightarrow\ \ H_{P,\Lambda'} \cap \nT_x X \neq H_{Q,\Lambda'} \cap \nT_x X. \label{eq: separated tangents 2}
\end{equation}
\subsection{Proof of the right hand side of (\ref{eq: separated tangents 2})} Fix coordinates in $\nP^3$ such that $P = [1,0,0,0] \in \nP^3$ and that $\nT_xX$ contains a point $T = [1,1,0,0] \neq x$. If $Q = [1,q_1,q_2,q_3]$, then
\[
F_{P,\Lambda'} = \pi_{P,\nP^3}^*\bigl( \pd_{Q_P}f_P \bigr) = q_1 \frac{\pd f_P}{\pd Z_1} + q_2 \frac{\pd f_P}{\pd Z_2} + q_3 \frac{\pd f_P}{\pd Z_3}.
\]
Here, the pullback $\pi_{P,\nP^3}^*$ is suppressed in the notation since $\pi_{P,\nP^3}^*$ is nothing but the natural inclusion $\nC[Z_1,Z_2,Z_3] \hookrightarrow \nC[Z_0,Z_1,Z_2,Z_3]$.

\medskip\par
The plane $H_{P,L}$ passes through $T$ if and only if
\begin{equation}\label{eq: H thru T, general form}
( T \cdot \nabla) F_{P,\Lambda'}(x) = \frac{\pd F_{P,\Lambda'}}{\pd Z_1}(x) = 0,
\end{equation}
or equivalently,
\begin{equation}\label{eq: H thru T}
q_1 \frac{\pd^2 f_P}{\pd Z_1^2}(x) + q_2 \frac{\pd^2 f_P}{\pd Z_1 \pd Z_2}(x) + q_3 \frac{\pd^2 f_P}{\pd Z_1 \pd Z_3}(x).
\end{equation}
The condition (\ref{eq: H thru T}) is a single linear condition on $Q = [1,q_1,q_2,q_3]$ unless $\dfrac{\pd^2 f_P}{\pd Z_1 \pd Z_i}(x)=0$ for all $i=1,2,3$.
\medskip\par
From now on, we denote by $\mathcal L_{P,T} \subseteq \nP^3$ the set of points $[q_0,q_1,q_2,q_3]$ defined by the equation \eqref{eq: H thru T, general form}. In general, $\mathcal L_{P,T}$ is a plane and parametrizes the points $Q$ satisfying $T \in H_{P, \langle P, Q \rangle}$. In Proposition~\ref{prop: inflection points and L_PT} and Proposition~\ref{prop: inflection and osculation}, we will provide a geometric illustration of the condition $\mathcal L_{P,T} = \nP^3$. %
Also, \eqref{eq: H thru T} suggests to define $\mathcal L_{P,x} = \langle P, \mathbf{T}_x X \rangle$. 

\medskip\par
Assume there exist admissible birational centers $P, Q$ such that 
\begin{equation}\label{eq: goal}
Q \in \mathcal L_{P,T},\ \text{but\ } P \not\in \mathcal L_{Q,T}.
\end{equation}
Then, the right hand side of \eqref{eq: separated tangents 2} follows immediately. From now on, we assume
\begin{equation}\label{eq: contrary assumption}
\text{if $P,Q$ are admissible birational centers, then}\ \ Q \in \mathcal L_{P,T} \Longleftrightarrow P \in \mathcal L_{Q,T},
\end{equation}
and finally we will derive a contradiction and hence verify the right hand side of \eqref{eq: separated tangents 2}.

\medskip\par
Before getting into the technical proof, we give some basic observation about $\mathcal L_{P,T}$.

\begin{lemma}\label{lem: when L_PT is not a pencil}
The condition (\ref{eq: H thru T}) is nontrivial for general $T \in \mathbf{T}_x X \setminus\{x\}$. Moreover, the condition is trivial at most one point of $\mathbf{T}_x X \setminus \{x\}$, and in such a case all the other $\mathcal L_{P,T}$ are $\langle P, \mathbf{T}_x X \rangle $.
\end{lemma}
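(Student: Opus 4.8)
The plan is to recast the entire statement as linear algebra governed by the symmetric Hessian matrix $M := \big( \pd^2 f_P / \pd Z_i\, \pd Z_j\, (\bar x) \big)_{1 \le i,j \le 3}$ of $f_P$ evaluated at $\bar x := \pi_{P,X}(x) \in \overline X_P := \pi_{P,X}(X) \subseteq \nP^2$. Expanding $F_{P,\Lambda'} = \sum_{j=1}^{3} q_j\, \pd f_P/\pd Z_j$ and using that these polynomials do not involve $Z_0$, I would rewrite condition \eqref{eq: H thru T, general form} for a point $T \in \nT_x X$ as $\mathbf{q} \cdot (M\mathbf{t}) = 0$, where $\mathbf{q} = (q_1,q_2,q_3)$ and $\mathbf{t} := \pi_{P,\nP^3}(T) \in \nC^3$, the symmetry of $M$ being used here. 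Thus $\mathcal L_{P,T}$ is the plane $V\big( \sum_j (M\mathbf t)_j Z_j \big)$, and it degenerates to all of $\nP^3$ precisely when $M\mathbf t = 0$. Since $P \notin \Cone(x,X)$, the morphism $\pi_{P,X}$ is an isomorphism near $x$, so its differential carries $\nT_x X$ isomorphically onto $\nT_{\bar x}\overline X_P$; concretely $\pi_{P,\nP^3}(\nT_x X) = \nT_{\bar x}\overline X_P = \{[\mathbf a + s\mathbf b]\}$, where $\mathbf a := \pi_{P,\nP^3}(x)$ and $\mathbf b$ is the projected tangent direction. Hence $\mathbf t$ ranges over the span $\langle \mathbf a, \mathbf b\rangle$ as $T$ varies.

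Next I would extract two consequences of Euler's identities for $f_P$, whose degree is $\deg \overline X_P \ge 2$ by non-degeneracy of $X$ in $\nP^3$. Differentiating $\sum_j Z_j\,\pd f_P/\pd Z_j = (\deg f_P) f_P$ gives $M\mathbf a = (\deg f_P - 1)\nabla f_P(\bar x)$. As $\overline X_P$ is smooth at $\bar x$ (again because $P \notin \Cone(x,X)$, so $\bar x$ is the image of $x$ alone), we have $\nabla f_P(\bar x) \neq 0$, and therefore $M\mathbf a \neq 0$. Moreover, since the tangent line $\nT_{\bar x}\overline X_P = V\big( \sum_i (\pd f_P/\pd Z_i)(\bar x) Z_i \big)$ contains both $[\mathbf a]$ and the direction $\mathbf b$, we obtain $\nabla f_P(\bar x)\cdot\mathbf a = \nabla f_P(\bar x)\cdot\mathbf b = 0$; consequently $M\mathbf a$ is orthogonal to both $\mathbf a$ and $\mathbf b$.

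Writing $\mathbf t = \alpha\mathbf a + \beta\mathbf b$, the degeneracy locus is $\{[\alpha:\beta] : \alpha\, M\mathbf a + \beta\, M\mathbf b = 0\}$. Since $M\mathbf a \neq 0$, the linear map $(\alpha,\beta) \mapsto \alpha M\mathbf a + \beta M\mathbf b$ is nonzero, so its projectivized kernel is at most one point of $\nP^1$; this immediately gives nontriviality of \eqref{eq: H thru T} for general $T$ and triviality at no more than one $T$. If that exceptional point exists, it satisfies $\beta \neq 0$ (otherwise $M\mathbf a = 0$), so it is a genuine point $T_0 \in \nT_x X \setminus \{x\}$. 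This disposes of the first two assertions.

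The heart of the matter is the last assertion, and this is where I expect the real work to lie. Suppose the degenerate point $T_0 = [\alpha_0\mathbf a + \beta_0\mathbf b]$ exists, so $\alpha_0 M\mathbf a + \beta_0 M\mathbf b = 0$ with $\beta_0 \neq 0$; this forces $M\mathbf b = -(\alpha_0/\beta_0) M\mathbf a$, so $M\mathbf b$ is a scalar multiple of $M\mathbf a$ and is in particular also orthogonal to $\mathbf a$ and $\mathbf b$. Then for every $T$ the vector $M\mathbf t = \alpha M\mathbf a + \beta M\mathbf b$ is orthogonal to both $\mathbf a$ and $\mathbf b$, which says exactly that every point of $\nT_x X = \{[\alpha\mathbf a + \beta\mathbf b]\}$ satisfies the defining equation $\mathbf q \cdot (M\mathbf t) = 0$ of $\mathcal L_{P,T}$. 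Hence $\nT_x X \subseteq \mathcal L_{P,T}$; since also $P \in \mathcal L_{P,T}$ with $P \notin \nT_x X$, and $\mathcal L_{P,T}$ is a genuine plane for $T \neq T_0$, a dimension count forces $\mathcal L_{P,T} = \langle P, \nT_x X\rangle$. The main obstacle is precisely this propagation step: one must recognize that a single degeneracy along $\nT_x X$ already imposes the rank-type relation $M\mathbf b \parallel M\mathbf a$, which together with the Euler orthogonalities collapses all remaining $\mathcal L_{P,T}$ onto the single plane $\langle P, \nT_x X\rangle$.
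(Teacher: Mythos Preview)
Your proof is correct and follows essentially the same logic as the paper's: both pivot on the Euler identity to show that the condition corresponding to $T = x$ is the nontrivial linear form $\mathbf q \cdot \nabla f_P(\bar x) = 0$ cutting out $\langle P,\nT_x X\rangle$, and then exploit linearity of the condition in $T$ along the tangent line. Your Hessian-matrix packaging in fact anticipates the paper's own Proposition~\ref{prop: inflection points and L_PT}, where $M$ is introduced explicitly; the paper's argument here is more bare-handed, parameterizing $\nT_x X$ as $T' = T + t x$ from the assumed degenerate point $T$ and observing directly that $\pd_{T'}F_{P,\Lambda'}(x) = t\,\pd_x F_{P,\Lambda'}(x)$, which immediately identifies every remaining $\mathcal L_{P,T'}$ with $\langle P,\nT_x X\rangle$ without the intermediate step of deducing $M\mathbf b \parallel M\mathbf a$.
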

\begin{proof}
Assume that (\ref{eq: H thru T}) is trivial at the point $T$, and pick another $T' = T + tx$ where $t \neq 0$. Then, (\ref{eq: H thru T}) with respect to $T'$ is written as
\[
\pd_{T'}F_{P,\Lambda'}(x) = (\pd_T + t \pd_x)F_{P,\Lambda'}(x) = 0
\]
Here, $\pd_T F_{P,\Lambda'}(x) = \dfrac{\pd F_{P,\Lambda'}}{\pd Z_1}(x)$ is the left hand side of (\ref{eq: H thru T}), and
\[
\pd_x F_{P,\Lambda'}(x) = (\deg F_{P,\Lambda'}) F_{P,\Lambda'}(x) = (\deg F_{P,\Lambda'}) \biggl( q_1 \frac{\pd f_P}{\pd Z_1}(x) + q_2 \frac{\pd f_P}{\pd Z_2}(x) + q_3 \frac{\pd f_P}{\pd Z_3}(x) \biggr).
\]
This is nontrivial since the curve $\overline{X}_P = \pi_{P,X}(X)$ is nonsingular at $\bar x_P = \pi_{P,X}(x)$. It follows that the condition $\pd_{T'}F_{P,\Lambda'}(x)=0$ differs from $\pd_{T} F_{P,\Lambda'}(x)=0$, hence cannot be trivial. The last statement follows from the fact that $\pd_x F_{P,\Lambda'}(x) = 0$ defines $\pi_{P,\nP^3}^* (\mathbf{T}_{x_P}X_P) = \langle P, \mathbf{T}_x X \rangle$.
\end{proof}

\begin{lemma} $ x \in \mathcal L_{P,T}$.\end{lemma}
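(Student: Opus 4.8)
The plan is to verify directly that $x$ satisfies the single linear equation \eqref{eq: H thru T} defining the plane $\mathcal{L}_{P,T}$. Since $\mathcal{L}_{P,T} \subseteq \nP^3$ is cut out, in the coordinates $[q_0,q_1,q_2,q_3]$, by the linear form
\[
    q_1 \frac{\partial^2 f_P}{\partial Z_1^2}(x) + q_2 \frac{\partial^2 f_P}{\partial Z_1 \partial Z_2}(x) + q_3 \frac{\partial^2 f_P}{\partial Z_1 \partial Z_3}(x),
\]
where the second derivatives of $f_P \in \nC[Z_1,Z_2,Z_3]$ are evaluated at the image point $\bar{x}_P := \pi_{P,X}(x)$, the membership $x \in \mathcal{L}_{P,T}$ is equivalent to the vanishing of this form at $[q_0,q_1,q_2,q_3] = [x_0,x_1,x_2,x_3]$.

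Substituting $q_i = x_i$, I would recognize the resulting scalar as the operator $\partial_x = \sum_{i=1}^3 x_i \,\partial/\partial Z_i$ applied to the homogeneous polynomial $g := \partial f_P / \partial Z_1$ of degree $\deg f_P - 1$, and then invoke Euler's relation:
\[
    \sum_{i=1}^3 x_i \frac{\partial g}{\partial Z_i}(\bar{x}_P) = (\deg f_P - 1)\, g(\bar{x}_P) = (\deg f_P - 1)\frac{\partial f_P}{\partial Z_1}(\bar{x}_P).
\]
This reduces the whole statement to the single scalar vanishing $\frac{\partial f_P}{\partial Z_1}(\bar{x}_P) = 0$.

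To obtain that vanishing I would appeal to the geometry of the projection. As $P$ is a birational center of $(X,x)$, Proposition~\ref{prop: consequences of generosity assumptions} ensures that $\pi_{P,X}$ is an isomorphism in a neighborhood of $x$; moreover $P \notin \sC_x = \Cone(x,X) \supseteq \nT_x X$, so $\pi_{P,\nP^3}$ maps the line $\nT_x X$ isomorphically onto the tangent line $\nT_{\bar{x}_P}\overline{X}_P$, which is defined by $\sum_{i=1}^3 Z_i \frac{\partial f_P}{\partial Z_i}(\bar{x}_P) = 0$. Since the chosen point $T = [1,1,0,0] \in \nT_x X$ has image $\bar{T}_P = [1,0,0]$, substituting $\bar{T}_P$ into this equation isolates precisely $\frac{\partial f_P}{\partial Z_1}(\bar{x}_P) = 0$. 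Combined with the Euler computation, this gives $x \in \mathcal{L}_{P,T}$.

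The computation is essentially a one-line use of Euler's relation once all conventions are fixed, so I do not expect a genuine obstacle; the only delicate point is the bookkeeping, namely keeping track that the pullback $\pi_{P,\nP^3}^*$ is suppressed so that derivatives of $f_P$ are read off at the projected point $\bar{x}_P$, and confirming that $T$ maps to the coordinate point $[1,0,0]$, which is exactly what makes the tangent-line equation pick out the coefficient $\partial f_P/\partial Z_1(\bar{x}_P)$.
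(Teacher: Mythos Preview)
Your proposal is correct and follows essentially the same approach as the paper: substitute $x$ into the defining equation~\eqref{eq: H thru T}, apply Euler's relation to the degree-$(\deg f_P - 1)$ form $\partial f_P/\partial Z_1$, and conclude using the fact that $\partial/\partial Z_1$ is tangent to $\overline{X}_P$ at $\bar{x}_P$. Your unpacking of this last step via $\bar{T}_P = [1,0,0]$ lying on the tangent line is exactly what the paper's one-line justification ``$\partial/\partial Z_1$ is a tangent vector of $\overline{X}$ at $\bar{x}$'' means.
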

\begin{proof}
Let $x = [x_0,x_1,x_2,x_3]$. It suffices to prove that $(q_1,q_2,q_3) = (x_1,x_2,x_3)$ is a solution to (\ref{eq: H thru T}). Indeed,
\begin{align*}
x_1 \frac{\pd^2 f_P}{\pd Z_1^2}(x) + x_2 \frac{\pd^2 f_P}{\pd Z_1 \pd Z_2}(x) + x_3 \frac{\pd^2 f_P}{\pd Z_1 \pd Z_3}(x) &{}= ((x_1,x_2,x_3) \cdot \nabla) \frac{\pd f_P}{\pd Z_1}(x) \\
&{}= (\deg f_P - 1) \cdot \frac{\pd f_P}{\pd Z_1}(x),
\end{align*}
and the latter term is zero since $\dfrac{\pd}{\pd Z_1}$ is a tangent vector of $\overline{X}$ at $\bar x$.
\end{proof}

The following proposition interprets Lemma~\ref{lem: when L_PT is not a pencil} geometrically.

\begin{proposition}\label{prop: inflection points and L_PT}
Let $P \in \nP^3$ be a birational center, and let $\overline{X}_P = \pi_{P,X}(X)$ and $\bar x_P = \pi_{P,X}(x)$. Then, the following are equivalent:
\begin{enumerate}
    \item $\bar x_P$ is an inflection point of $\overline{X}_P$;
    \item there exists a point $T_P \in \mathbf{T}_x X$ such that $\mathcal L_{P, T_P} = \nP^3$.
\end{enumerate}
Moreover, if (2) holds, then such $T_P$ is unique.
\end{proposition}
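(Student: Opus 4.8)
The plan is to convert the condition $\mathcal L_{P,T}=\nP^3$ into a kernel condition for the Hessian of $f_P$, and then to identify that kernel condition with the flex condition via Euler's relations. Working in the fixed coordinates where $P=[1,0,0,0]$ and $f_P\in\nC[Z_1,Z_2,Z_3]$ defines $\overline{X}_P\subseteq\nP^2$, write $M:=\bigl(\tfrac{\pd^2 f_P}{\pd Z_i\pd Z_j}(\bar x_P)\bigr)_{1\le i,j\le 3}$ for the Hessian of $f_P$ in the variables $Z_1,Z_2,Z_3$, evaluated at $\bar x_P$. For $T\in\nT_x X$ with image $\bar T=(T_1,T_2,T_3)$ under $\pi_{P,\nP^3}$, the fact that $F_{P,\Lambda'}=\sum_j q_j\,\tfrac{\pd f_P}{\pd Z_j}$ is independent of $Z_0$ gives $(T\cdot\nabla)F_{P,\Lambda'}(x)=\bar T^\top M q$, a linear form in $q=(q_1,q_2,q_3)$. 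Hence $\mathcal L_{P,T}=\nP^3$ if and only if $M\bar T=0$. As $T$ runs over $\nT_x X$, the vector $\bar T$ runs over the $2$-dimensional subspace $W:=\langle\bar x_P,v\rangle\subseteq\nC^3$, where $v=(1,0,0)$ is the image of $T_0=[1,1,0,0]$; indeed $\mathbb P(W)=\pi_{P,X}(\nT_x X)=\nT_{\bar x_P}\overline{X}_P$ because $\pi_{P,X}$ is an isomorphism near $x$ (Proposition~\ref{prop: consequences of generosity assumptions}). Thus condition $(2)$ is equivalent to $\ker M\cap W\neq\{0\}$.

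The next step is to relate $\ker M\cap W\neq\{0\}$ to the inflection condition. Since $\bar x_P$ is a smooth point and $v$ spans the tangent direction, expanding $f_P(\bar x_P+tv)$ and using $f_P(\bar x_P)=0$ and $v\cdot\nabla f_P(\bar x_P)=0$ shows that $\bar x_P$ is an inflection point exactly when $v^\top M v=0$. The bridge is supplied by Euler's identity: differentiating $\sum_j Z_j\,\tfrac{\pd f_P}{\pd Z_j}=d' f_P$ (with $d'=\deg\overline{X}_P\ge 2$) yields $M\bar x_P=(d'-1)N$, where $N:=\nabla f_P(\bar x_P)\neq 0$. Tangency and Euler give $N\cdot v=0$ and $N\cdot\bar x_P=0$, so $N\in W^\perp$; as the standard bilinear form on $\nC^3$ is nondegenerate, $\dim W^\perp=1$ and hence $W^\perp=\langle N\rangle$. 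Now $v^\top M\bar x_P=(d'-1)\,v\cdot N=0$ shows $Mv\perp\bar x_P$ unconditionally, so $\ker M\cap W\neq\{0\}$ is equivalent to $Mv\in\langle N\rangle=W^\perp$, which in turn holds iff $Mv\perp v$, i.e.\ iff $v^\top M v=0$. Chaining these equivalences proves $(1)\Leftrightarrow(2)$.

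For the uniqueness clause, I would observe that $M\bar x_P=(d'-1)N\neq 0$ forces $\bar x_P\notin\ker M$, so $\ker M\cap W$ has dimension at most one; when nonzero it determines a single point $T_P\in\mathbb P(W)=\nT_x X$, and $T_P\neq x$ precisely because $\bar x_P\notin\ker M$ (consistent with Lemma~\ref{lem: when L_PT is not a pencil}). I expect the only delicate points to be formal rather than substantive: tracking which coordinate is suppressed under $\pi_{P,\nP^3}$, and the possibility that $N$ is isotropic for the standard form. The latter is harmless, since the argument uses only the dimension identity $\dim W^\perp=3-\dim W=1$, valid for any subspace under a nondegenerate bilinear form irrespective of isotropy. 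Finally, in the reducible case $X=\nT_x X\cup C'$ one still has $d'\ge 2$, and by Proposition~\ref{prop: consequences of generosity assumptions} the point $\bar x_P$ is a smooth point of $\overline{X}_P$, so every step above applies verbatim.
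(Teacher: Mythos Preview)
Your proof is correct and follows essentially the same approach as the paper: both identify $\mathcal L_{P,T}=\nP^3$ with the condition $\bar T\in\ker(\text{Hessian of }f_P)$ and then use Euler's identity to link this kernel to the flex condition. The paper carries this out by choosing coordinates with $\bar x_P=[1,0,0]$ and $\nT_{\bar x_P}\overline{X}_P=(Z_2=0)$, displaying the explicit shape of the Hessian matrix and invoking the classical fact that a smooth point is a flex iff the Hessian is singular; your version repackages the same computation coordinate-freely, replacing ``$\det M=0$'' by the equivalent $v^\top M v=0$ and using $M\bar x_P=(d'-1)N\in W^\perp$ to show $\ker M\cap W\neq 0\Leftrightarrow Mv\in W^\perp\Leftrightarrow v^\top Mv=0$.
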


\begin{proof}
Let $f_P \in \nC[Z_0,Z_1,Z_2]$ be the equation of $\overline{X}_P \subseteq \nP^2$, and let $\mathbf H := \op{Hess} (f_P)_{\bar x_P}$ be the Hessian matrix of $\bar f_P$ evaluated at $\bar x_P$. We assume that $\bar x_P \in \overline{X}_P$ is an inflection point, and show that $\op{rank} \mathbf H = 2$. Taking suitable coordinates, we assume $\bar x_P = [1,0,0]$ and $\mathbf{T}_{\bar x_P} \overline{X}_P = (Z_2 = 0)$. Let $\gamma \colon \Delta \to \overline{X}_P$, $t \mapsto [ \gamma_0(t), \gamma_1(t), \gamma_2(t)]$ be a local parametrization of $\overline{X}_P$ at $\gamma(0) = \bar x_P$. Then,
\[
  \displaystyle \frac{ \pd f_P} {\pd Z_i}(\bar x_P) = 0 \Longleftrightarrow i \neq 2
\]
since $\displaystyle \mathbf{T}_{\bar x_P} \overline{X}_P = \Bigl( \sum_{i=0,1,2} Z_i \frac{ \pd f_P } { \pd Z_i }( \bar x_P ) = 0 \Bigr) = (Z_2 = 0 )$. By Euler's theorem,
\[
\sum_{j=0,1,2} \gamma_j(0) \frac{\pd^2 f_P }{\pd Z_j \pd Z_i} (\bar x_P ) = (\deg f_P -1 ) \frac{\pd f_P}{\pd Z_i} ( \bar x_P ),\quad \forall i \in \{0,1,2\},
\]
and the left hand side of the above equation reads $\displaystyle\gamma_0(0) \frac{\pd^2 f_P}{ \pd Z_j \pd Z_i} (\bar x_P)$. It follows that
\[
\frac{\pd^2 f_P}{ \pd Z_j \pd Z_i} (\bar x_P) = \left\{
\begin{array}{ll}
 0 & i = 0,1 \\
 \neq 0 & i = 2.
\end{array}
\right.
\]
Since the matrix $\mathbf H$ is singular, $\mathbf {H}$ is of the form
\[
\left( 
\begin{array}{ccc}
0 & 0 & \neq 0 \\
0 & 0 & ? \\
\neq 0 & ? & ? 
\end{array}
\right),
\]
showing that $\op{rank} \mathbf H = 2$. Let $\bar T \in \mathbf{T}_{\bar x_P} \overline{X}_P \setminus \{ \bar x_P \}$ be a point, and let $T \in \mathbf{T}_x X$ be the unique point which maps to $\bar T$ along $\pi_{P,X}$. Then, $\mathcal L_{P,T}$ is the pullback of the line defined by the matrix equation
\[
    \{ [Z_0,Z_1,Z_2] \in \nP^3 \mid (\ Z_0\ Z_1\ Z_2\ )\, \mathbf H\, (\ \bar T_0\ \bar T_1\ \bar T_2\ )^{\sf T} = 0 \},\quad\text{(cf. Eq.~\eqref{eq: H thru T})}
\]
where $\bar T = [ \bar T_0, \bar T_1, \bar T_2 ]$ and ${({}\mathrel{\cdot}{})}^{\sf T}$ denotes the matrix transpose. In particular, we have
\begin{align*}
\mathcal L_{P,T} = \nP^3\  & \Longleftrightarrow\ \mathbf H (\ \bar T_0\ \bar T_1\ \bar T_2\ )^{\sf T} = 0 \\
& \Longleftrightarrow\ (\ \bar T_0\ \bar T_1\ \bar T_2\ )^{\sf T} \in \ker \mathbf {H},
\end{align*}
and there is a unique $T = T_P$ satisfying the above equivalent conditions since $\dim \ker \mathbf H = 1$. This shows $(1) \Rightarrow (2)$.
Conversely, if (2) is true, then $T_P$ induces the nontrivial vector of $\ker \mathbf {H}$, hence (1) follows. Such $T_P$ is unique since $\dim \ker \mathbf {H} = 1$.
\end{proof}

\begin{proposition}\label{prop: inflection and osculation}
Notations as in Proposition~\ref{prop: inflection points and L_PT}. The curve $\overline{X}_P$ has an inflection point at $\bar x_P$ if and only if $P$ lies on a plane $\Pi$ osculating to $X$ at $x$. In particular, if $x \in X$ is an inflection point, then so is $\bar x_P \in \overline{X}_P$ for any birational center $P$.
\end{proposition}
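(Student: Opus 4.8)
The plan is to reduce the whole proposition to a single principle: linear projection from a birational center $P$ preserves local intersection multiplicities at $x$. Since $P$ is a birational center we have $P \notin \sC_x = \Cone(x,X)$, and because the tangent line is a limit of secants, $\nT_x X \subseteq \Cone(x,X)$; in particular $P \notin \nT_x X$. Hence $\pi_{P,X}$ is an immersion at $x$, and since $\langle P,x\rangle \cap X = \{x\}$ it is an isomorphism onto its image in a neighborhood of $x$. Consequently the tangent line of $\overline{X}_P$ at $\bar x_P$ is $\pi_{P,X}(\nT_x X)$, and the preimage of this tangent line under $\pi_{P,\nP^3}$ is exactly the plane $\Pi_0 := \langle P, \nT_x X\rangle$, which is a genuine plane because $P \notin \nT_x X$. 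For $y$ near $x$ one has $y \in \Pi_0 \iff \pi_{P,X}(y) \in \nT_{\bar x_P}\overline{X}_P$, so under the local isomorphism the divisors $\Pi_0 \cap X$ and $\nT_{\bar x_P}\overline{X}_P \cap \overline{X}_P$ correspond and have equal multiplicity at $x$ and at $\bar x_P$ respectively.

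With this in hand the heart of the proof is the equivalence that $\bar x_P$ is an inflection point of $\overline{X}_P$ if and only if $\Pi_0$ is osculating to $X$ at $x$ (here I take ``osculating'' to mean contact order $\geq 3$ at $x$). By definition $\bar x_P$ is a flex precisely when $\nT_{\bar x_P}\overline{X}_P$ meets $\overline{X}_P$ with multiplicity $\geq 3$ at $\bar x_P$; by the multiplicity identity above this is equivalent to $\Pi_0$ meeting $X$ with multiplicity $\geq 3$ at $x$, i.e. to $\Pi_0$ being osculating. Since $P \in \Pi_0$ always holds, the ``only if'' direction is immediate: a flex at $\bar x_P$ exhibits the osculating plane $\Pi_0$ passing through $P$. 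For the converse, suppose $P$ lies on some osculating plane $\Pi$. As any osculating plane has contact $\geq 3 \geq 2$ it contains $\nT_x X$; combined with $P \in \Pi$ and $P \notin \nT_x X$ this forces $\Pi = \langle P, \nT_x X\rangle = \Pi_0$, so $\Pi_0$ is osculating and $\bar x_P$ is a flex.

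The ``in particular'' clause then follows from the same bookkeeping: if $x$ is an inflection point of the space curve $X$, the tangent line $\nT_x X$ already meets $X$ with multiplicity $\geq 3$, so using a local parametrization $\gamma(t)$ with $\gamma(0)=x$ one checks that every plane containing $\nT_x X$ — in particular $\Pi_0$ for any birational center $P$ — meets $X$ at $x$ with multiplicity $\geq 3$; hence $\Pi_0$ is always osculating and $\bar x_P$ is always a flex. The one genuinely delicate step, which I would write out most carefully, is the justification that $\pi_{P,X}$ is a local isomorphism at $x$ and that the intersection multiplicity is faithfully transported across it; once that is secured the remainder is formal. One must also keep in mind that the osculating plane need not be unique when $x$ itself is an inflection of $X$, but the argument accommodates this since it only ever uses the distinguished plane $\Pi_0$.
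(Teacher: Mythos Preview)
Your proof is correct and takes a somewhat different route from the paper's. The paper chooses coordinates so that $x=[1,0,0,0]$, $\nT_xX=(Z_2=Z_3=0)$, $P=[0,0,0,1]$, and then, for a local parametrization $\gamma$ of $X$ at $x$, computes directly that both ``$\bar x_P$ is a flex of $\overline{X}_P$'' and ``$\langle P,\nT_xX\rangle$ osculates $X$ at $x$'' are equivalent to the single vanishing $\gamma_2''(0)=0$. Your argument bypasses this computation by observing once and for all that $\pi_{P,X}$ is a local isomorphism at $x$ (since $P\notin\Cone(x,X)\supseteq\nT_xX$), that the preimage of $\nT_{\bar x_P}\overline{X}_P$ is exactly $\Pi_0=\langle P,\nT_xX\rangle$, and that local intersection multiplicities are transported across a local isomorphism; the proposition then becomes the tautology $\mult_x(\Pi_0\cap X)\geq 3 \iff \mult_{\bar x_P}(\nT_{\bar x_P}\overline{X}_P\cap\overline{X}_P)\geq 3$. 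Your converse step, forcing any osculating plane through $P$ to coincide with $\Pi_0$ via $P\notin\nT_xX$, is a clean touch that the paper leaves implicit. The trade-off is that the paper's coordinate calculation feeds directly into the explicit formulas for $T_P$ and the Hessian used in the surrounding propositions (e.g.\ Corollary~\ref{cor: kernel of Hessian, osculating or inflectionary case}), whereas your argument, while more conceptual for this statement in isolation, would still need those coordinates elsewhere.
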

\begin{proof}
Let $\gamma \colon \Delta \to X$ be a local parametrization at $x = \gamma(0)$. By choosing homogeneous coordinates $[Z_0,Z_1,Z_2,Z_3]$ in such a way that $x = [1,0,0,0]$, $\mathbf{T}_x X = (Z_2 = Z_3 = 0)$, and $P = [0,0,0,1]$, we may assume $\gamma(0) = [1,0,0,0]$ and $\gamma'(0) = [0,1,0,0]$. After the projection, the induced map $\bar \gamma_P := \pi_{P,\nP^3} \circ \gamma = [ \gamma_0, \gamma_1, \gamma_2]$ parametrizes $\overline{X}_P$ at $\bar x_P$. We have the following equivalences:
\begin{align*}
\bar x_P \in \overline{X}_P\text{ is an inflection point\ }&\Longleftrightarrow\ 
\bar\gamma_P''(0) \in [ \bar\gamma_P(0) \wedge \bar\gamma_P'(0) ] \\
&\Longleftrightarrow\ \gamma_2''(0) = 0.
\end{align*}
On the other hand, we have
\begin{align*}
\Pi = \langle P, \mathbf T_x X \rangle\text{\ osculates to\ $X$ at $x$\ }&\Longleftrightarrow\ 
\nP[ \gamma(0) \wedge \gamma'(0) \wedge \gamma''(0) ] \subseteq \Pi \\
&\Longleftrightarrow\ \gamma_2''(0)=0,
\end{align*}
and hence, the desired equivalence follows. If $x \in X$ is an inflection point, then every plane containing $\mathbf{T}_x X$ osculates to $X$ at $x$, and thus, $\bar x_P \in \overline{X}_P$ is an inflection point for any birational center $P$.
\end{proof}

\begin{proposition}\label{prop: union of L_PT cap L_QT} Assume that $x \in X$ is not an inflection point. Let $R, S \in \nP^3$ be an admissible pair of birational centers such that neither $\langle R, \mathbf T_x X\rangle$ nor $\langle S , \mathbf T_x X \rangle$ osculates to $X$ at $x$. Since $\{ \mathcal L_{R,T} \mid T \in \mathbf T_x X \}$ is a pencil of planes\,(cf. Lemma~\ref{lem: when L_PT is not a pencil} and Proposition~\ref{prop: inflection points and L_PT}), we may take $T_0 \in \mathbf{T}_x X \setminus\{ x\}$ such that $S \in \mathcal L_{R,T_0}$. Fix a vector $\hat R$\,(resp. $\hat S$, $\hat x$, etc.) in $\nC^4$ representing $R \in \nP^3$\,(resp. $S$, $x$, etc.). 
Then, using Pl\"ucker embedding, we may write
\[
\mathcal L_{R,T} = \nP\bigl[ \, \hat x \wedge \hat R \wedge ( \hat T_0 + t \gamma \hat S ) \, \bigr]\quad \text{and\quad }%
\mathcal L_{S,T} = \nP\bigl[ \, \hat x \wedge \hat S \wedge ( \hat T_0 + t \gamma' \hat R) \, \bigr],
\]
where $\gamma,\gamma'$ are nonzero constants and $t$ is an element of $\nP^1 = \nC \cup\{ \infty\}$ such that $\hat T = \hat x + t \hat T_0$. Moreover,
\[
\bigcup_{T \in \mathbf{T}_x X} (\mathcal L_{R,T} \cap \mathcal L_{S,T}) = \langle x, R, S \rangle \cup \Theta,
\]
where $\Theta$ is the plane $\nP [ \hat x \wedge ( \gamma' \hat R + \gamma \hat S ) \wedge \hat T_0 ]$.
\end{proposition}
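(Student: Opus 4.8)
The plan is to first establish the two Pl\"ucker normal forms for the pencils $\{\mathcal L_{R,T}\}_T$ and $\{\mathcal L_{S,T}\}_T$, then intersect the planes fibrewise over $\nT_x X$ and read off the union, paying special attention to the one fibre where the two planes coincide. Throughout I identify a plane in $\nP^3$ with the class of a triple wedge in $\wedge^3\nC^4\cong(\nC^4)^\vee$.

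For the $R$-pencil I would argue as follows. Every member contains $x$ (the preceding lemma) and contains $R$, because $\mathcal L_{R,T}=\pi_{R,\nP^3}^{-1}(\text{line})$ is a plane through the centre $R$; hence the axis of the pencil is $\langle x,R\rangle$. Since $\langle R,\nT_xX\rangle$ does not osculate, $\bar x_R$ is not an inflection point of $\overline X_R$ (Proposition~\ref{prop: inflection and osculation}), so by Lemma~\ref{lem: when L_PT is not a pencil} and Proposition~\ref{prop: inflection points and L_PT} the functional defining $\mathcal L_{R,T}$ is nontrivial for every $T$ and varies projectively linearly with $T$; thus $\{\mathcal L_{R,T}\}$ is a genuine pencil. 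Writing $\hat T=\hat x+t\hat T_0$, its member at $t=0$ is $\mathcal L_{R,x}=\langle R,\nT_xX\rangle$ (third point $T_0$) and its member at $t=\infty$ is $\mathcal L_{R,T_0}$, which contains $S$ by the choice of $T_0$; so the third point moves linearly from $T_0$ to $S$, giving $\mathcal L_{R,T}=\nP[\hat x\wedge\hat R\wedge(\hat T_0+t\gamma\hat S)]$ with $\gamma\neq0$. The $S$-pencil is treated identically, with axis $\langle x,S\rangle$ and third point starting at $T_0$; the key extra input is that its special member, the one containing $R$, also sits at $t=\infty$: the member of the $R$-pencil through $S$ is unique (as $S$ lies off the axis $\langle x,R\rangle$) and equals $\mathcal L_{R,T_0}$ by construction, so the contrary assumption~\eqref{eq: contrary assumption} gives $R\in\mathcal L_{S,T_0}$, placing the $S$-pencil's special member at $T_0$ as well. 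This yields $\mathcal L_{S,T}=\nP[\hat x\wedge\hat S\wedge(\hat T_0+t\gamma'\hat R)]$ with $\gamma'\neq0$.

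Next I would intersect the planes. First I record that $\hat x,\hat R,\hat S,\hat T_0$ is a basis of $\nC^4$: were it not, then $T_0\in\langle x,R,S\rangle$, forcing $\langle R,S\rangle$ and $\nT_xX$ to be coplanar and hence to meet, contradicting admissibility condition~(4). In this basis a routine coefficient comparison shows that for $t\neq\infty$ the two planes meet exactly along the line $\langle x,\ \hat T_0+t(\gamma'\hat R+\gamma\hat S)\rangle$, which lies in $\Theta=\nP[\hat x\wedge(\gamma'\hat R+\gamma\hat S)\wedge\hat T_0]$; as $t$ runs over $\nC$ these lines sweep out all of $\Theta$ except the ruling $\langle x,\gamma'\hat R+\gamma\hat S\rangle$. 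At the remaining fibre $t=\infty$, that is $T=T_0$, both pencils attain their special member, so $\mathcal L_{R,T_0}=\mathcal L_{S,T_0}=\langle x,R,S\rangle$ and the fibrewise intersection is this entire plane. Since $\gamma'\hat R+\gamma\hat S\in\langle R,S\rangle$, the one missing ruling of $\Theta$ already lies in $\langle x,R,S\rangle$, and therefore the total union is exactly $\langle x,R,S\rangle\cup\Theta$.

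The main obstacle is the single degenerate fibre $T=T_0$. It is precisely the fibre at which, by \eqref{eq: contrary assumption}, both pencils carry the member through the opposite centre, so the two planes coincide and contribute the second component $\langle x,R,S\rangle$ instead of a line; everywhere else the intersection is an honest line sweeping out $\Theta$. The care in the argument lies in making the two Pl\"ucker normalisations share this distinguished parameter $t=\infty$ and in checking that the limiting ruling of $\Theta$ is absorbed into $\langle x,R,S\rangle$, so that the union closes up to exactly two planes; the remainder is linear algebra in the basis $\hat x,\hat R,\hat S,\hat T_0$.
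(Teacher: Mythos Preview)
Your proof is correct and follows essentially the same route as the paper's: both establish the Pl\"ucker normal forms by identifying the axis of each pencil, evaluating at the two special parameters $T=x$ and $T=T_0$, and then invoking the standing assumption \eqref{eq: contrary assumption} to force $R\in\mathcal L_{S,T_0}$ so that the $S$-pencil shares the same distinguished parameter; the fibrewise intersection and union are then a straightforward computation. Your argument is slightly more explicit than the paper's in two places---you justify that $\hat x,\hat R,\hat S,\hat T_0$ form a basis via admissibility condition~(4), and you check that the missing ruling of $\Theta$ at $t=\infty$ is absorbed into $\langle x,R,S\rangle$---but these are expository refinements rather than a different strategy.
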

\begin{proof}
Since $x \in X$ is not an inflection point, the assumption \eqref{eq: contrary assumption} implies that $\mathcal L_{R,T} = \mathcal L_{S,T} = \langle x, R, S \rangle$. Since $\mathcal L_{R,x} = \langle R, \mathbf{T}_x X \rangle$, we may write
\begin{align*}
\mathcal L_{R,x} &= \nP [ \hat x \wedge \hat R \wedge \hat \ell_0 ], \\
\mathcal L_{R,T_0} &= \nP [ \hat x \wedge \hat R \wedge \hat \ell_\infty],
\end{align*}
for $\ell_0 = \alpha_0 \hat x + \beta_0 \hat R + \gamma_0 \hat T_0$ and $\ell_\infty = \alpha_\infty \hat x + \beta_\infty \hat R + \gamma_\infty \hat S$ with $\gamma_0,\gamma_\infty \neq 0$. Let $\hat T = \hat x + t \hat T_0$ for $t \in \nC \cup \{0\}$. We may assume that each member of the pencil $\{\mathcal L_{R,T}\}_T$ is described by
\begin{align*}
 \mathcal L_{R,T} &= \nP[ \hat x \wedge \hat R \wedge ( \hat \ell_0 + t \hat \ell_\infty ) ] \\
 &= \nP [ \hat x \wedge \hat R \wedge ( \gamma_0 \hat T_0 + t \gamma_\infty \hat S ) ].
\end{align*}
Similarly, we have
\[
    \mathcal L_{S,T} = \nP [ \hat x \wedge \hat S \wedge ( \gamma_0' \hat T_0 + t \gamma_\infty' \hat R ) ]
\]
for some nonzero constants $\gamma_0', \gamma_\infty'$. Wihtout loss of generality, we may assume $\gamma_0 = \gamma'_0 = 1$. Then,
\[
    \mathcal L_{R,T} \cap \mathcal L_{S,T} = \left\{ 
    \begin{array}{ll}
    \nP [ \hat x \wedge ( \hat T_0 + t \gamma_\infty \hat S + t \gamma_\infty' \hat R)  ] & t \neq \infty \\
    \nP [ \hat x \wedge \hat R \wedge \hat S ] & t = \infty.
    \end{array}
    \right.{}
\]
Taking union over all $t \in \nP^1$ yields
\[
    \bigcup_{T \in \mathbf{T}_x X} ( \mathcal L_{R,T} \cap \mathcal L_{S,T} ) = \langle x, R, S \rangle \cup \nP [ \hat x \wedge ( \gamma'_\infty \hat R + \gamma_\infty \hat S ) \wedge \hat T_0 ]. \qedhere
\]
\end{proof}

\begin{proposition}\label{prop: V_out separates tangents, non-osculating case}
Notations as in Proposition~\ref{prop: union of L_PT cap L_QT}. Let $\Theta$ be the plane component of 
\[
    \bigcup_{T \in \mathbf{T}_x X} (\mathcal L_{R,T} \cap \mathcal L_{S,T} )
\]
which differs from $\langle x, R, S \rangle$. If $\Theta$ is not osculating to $X$ at $x$, then the assumption \eqref{eq: contrary assumption} fails to hold. In particular, the right hand side of \eqref{eq: separated tangents 2} holds for some admissible pair $P,Q$.
\end{proposition}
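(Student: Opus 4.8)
The plan is to argue by contradiction: assume that \eqref{eq: contrary assumption} holds and use the description of $\bigcup_{T \in \mathbf T_x X}(\mathcal L_{R,T} \cap \mathcal L_{S,T}) = \langle x,R,S\rangle \cup \Theta$ from Proposition~\ref{prop: union of L_PT cap L_QT} to reach an impossible conclusion. First I would record two structural facts. On the one hand, $\Theta \neq \langle x,R,S\rangle$: since $\Theta = \nP[\hat x \wedge (\gamma'_\infty \hat R + \gamma_\infty \hat S) \wedge \hat T_0]$ contains the tangent line $\mathbf T_x X = \langle x, T_0\rangle$ while $\langle x,R,S\rangle$ contains $\langle R,S\rangle$, an equality would force a single plane to contain the two lines $\mathbf T_x X$ and $\langle R,S\rangle$, which are disjoint by the admissibility condition $\langle R,S\rangle \cap \mathbf T_x X = \emptyset$; two disjoint lines cannot lie on a common plane in $\nP^3$. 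On the other hand, every locus $\mathcal L_{P,T}$ passes through both $x$ (already shown) and $P$: the defining equation \eqref{eq: H thru T} involves only $q_1,q_2,q_3$, so the point $P = [1,0,0,0]$ satisfies it. In particular $W \in \mathcal L_{W,T}$ for every $T$ and every point $W$ playing the role of the center.

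Next I would select a general point $W \in \Theta$ and pin down the genericity it enjoys. Since $\Theta \neq \langle x,R,S\rangle$, we have $W \notin \langle x,R,S\rangle$. Since $\Theta$ is not osculating, i.e.\ $\Theta \neq \Pi$ for the osculating plane $\Pi$ of $X$ at $x$ (unique because $x$ is not an inflection point), a general $W \in \Theta$ satisfies $W \notin \Pi$, so $\langle W, \mathbf T_x X\rangle$ does not osculate $X$ at $x$. By Propositions~\ref{prop: inflection and osculation} and \ref{prop: inflection points and L_PT}, $\bar x_W$ is then not an inflection point of $\overline{X}_W$, whence $\mathcal L_{W,T'} \neq \nP^3$ for every $T'$. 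Finally, a dimension count in the spirit of Proposition~\ref{prop: admissible pairs, existence} — using that $\sB(X)$ is finite and that the cones $\sC_x, \Cone(R,X), \Cone(S,X)$ cannot contain the plane $\Theta$ because $X \subseteq \nP^3$ is non-degenerate — shows that $W$ is a birational center of $(X,x)$ and that $(R,W)$ and $(S,W)$ are admissible pairs.

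With such a $W$ fixed, the contradiction is immediate. As $W \in \Theta \subseteq \bigcup_T(\mathcal L_{R,T} \cap \mathcal L_{S,T})$, there is $T \in \mathbf T_x X$ with $W \in \mathcal L_{R,T}$ and $W \in \mathcal L_{S,T}$. Applying \eqref{eq: contrary assumption} to the admissible pairs $(R,W)$ and $(S,W)$ yields $R \in \mathcal L_{W,T}$ and $S \in \mathcal L_{W,T}$. Since $x,R,S$ are non-collinear (again $x \notin \langle R,S\rangle$ by admissibility) and $\mathcal L_{W,T}$ is a \emph{proper} plane containing all three, we must have $\mathcal L_{W,T} = \langle x,R,S\rangle$. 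But $W \in \mathcal L_{W,T}$ by the first paragraph, so $W \in \langle x,R,S\rangle$, contradicting $W \notin \langle x,R,S\rangle$. Hence \eqref{eq: contrary assumption} fails, and as explained just before \eqref{eq: separated tangents 2} this produces an admissible pair $P,Q$ with $Q \in \mathcal L_{P,T}$ but $P \notin \mathcal L_{Q,T}$, i.e.\ the right-hand side of \eqref{eq: separated tangents 2}.

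I expect the only delicate points to be the genericity verifications — ensuring that a general $W \in \Theta$ simultaneously avoids $\Pi$ and $\langle x,R,S\rangle$ and forms admissible pairs with \emph{both} $R$ and $S$ — together with the decisive use of the hypothesis that $\Theta$ is not osculating, which is precisely what forces $\mathcal L_{W,T} \neq \nP^3$ and thereby lets a proper plane through $x,R,S$ be identified with $\langle x,R,S\rangle$. The genuinely hard cases, which this argument deliberately excludes and which are treated separately afterwards, are exactly those where the hypothesis breaks down, namely when $\Theta$ is an osculating plane or when $x$ is itself an inflection point.
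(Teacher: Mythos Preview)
Your argument is correct and follows essentially the same route as the paper: pick a general point $W$ (the paper calls it $P$) in $\Theta \setminus (\langle x,R,S\rangle \cup \Pi)$, observe that $W \in \mathcal L_{R,T_0} \cap \mathcal L_{S,T_0}$ for some $T_0$, use \eqref{eq: contrary assumption} to get $R,S \in \mathcal L_{W,T_0}$, and contradict $\mathcal L_{W,T_0} \neq \nP^3$ (the paper phrases the contradiction as $\mathcal L_{W,T_0} \supseteq \langle x,W,R,S\rangle = \nP^3$, which is the same thing). One small refinement: in the reducible case $X = \nT_xX \cup C'$ the cone $\Cone(R,X)$ does contain a plane component $\langle R,\nT_xX\rangle$, so the admissibility check for $(R,W)$ needs the observation (which you essentially have, via Proposition~\ref{prop: union of L_PT cap L_QT}) that $R \notin \Theta$, hence $\Theta \neq \langle R,\nT_xX\rangle$.
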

\begin{proof}
Let $\Pi$ be the plane osculating to $X$ at $x$, and let $P$ be a general point of $\Theta \setminus ( \langle x, R, S \rangle \cup \Pi )$. Then, $P$ belongs to $\mathcal L_{R,T_0} \cap \mathcal L_{S,T_0}$ for some $T_0 \in \mathbf{T}_x X$. We claim that both of the pairs $\{P,R\}$ and $\{P,S\}$ are admissible birational centers.

\medskip\par
We need to consider to cases, that is, either $X$ is smooth or $X = \nT_x X \cup C'$ with $C'$ smooth and $x \not\in C'$. Suppose $X$ is smooth. We examine the locus of $P$ in which the pair $P,R$ is admissible. %
It is obvious that the conditions (1), (2), and (3) of Definition~\ref{def: birational centers and admissible pairs} cannot exclude a plane. The condition (4) might exclude a plane $\langle R, \nT_x X \rangle$. However, by Proposition~\ref{prop: union of L_PT cap L_QT}, $\Theta$ contains neither $R$ nor $S$. These arguments show that the admissibility condition cannot exclude $\Theta$. Thus, the pair $P,R$ are admissible for some $P$\,(hence for general $P$ as well).

\medskip\par
Consider the case $X = \nT_x X \cup C'$. As we have seen in the proof of Proposition~\ref{prop: admissible pairs, existence}, $\sB(X) \subseteq \sB(C')$ and $\sC_x = \nT_x X \cup \op{Cone}(x,C')$. Hence, neither $\sB(X)$ nor $\sC_x$ contains a plane. The conditions (3) and (4) of Definition~\ref{def: birational centers and admissible pairs} may exclude the plane $\langle R, \nT_xX \rangle$, but as in the smooth case this plane differs from $\Theta$. This proves that $P,R$ is admissible for general $P$.

\medskip \par
In both cases, we saw that $P,R$ is an admissible pair for general $P \in \Theta$. Same arguments apply for $P,S$. By Proposition~\ref{prop: inflection points and L_PT} and Proposition~\ref{prop: inflection and osculation}, we have $\mathcal L_{P,T_0} \neq \nP^3$. From the assumption \eqref{eq: contrary assumption} we derive a contradiction that $\mathcal L_{P,T_0} \supseteq \langle x, P, R, S \rangle = \nP^3$.
\end{proof}

When $X = \nT_x X \cup C'$ as in the case (2) in the end of Subsection \ref{subsec:reduction}, there is no osculating plane to $X$ at $x$. Thus Proposition \ref{prop: V_out separates tangents, non-osculating case} finishes the proof for this case. We henceforth assume that $X$ is smooth as in the case (1) in the end of Subsection \ref{subsec:reduction}. It still remains to consider the following two cases: 
\begin{enumerate}\renewcommand*{\labelenumi}{(\roman{enumi})}
\item either the plane $\Theta$ osculates to $X$ at $x$ for every $R,S$ as in Proposition~\ref{prop: union of L_PT cap L_QT}, or
\item $x \in X$ is an inflection point.
\end{enumerate}
The cases $(\mathrm{i})$ and $(\mathrm{ii})$  will be completed in Proposition \ref{prop: Vout separates tangnets, non-inflectionary case} and Corollary \ref{cor: Vout separates tangnets, inflectionary case}. We remark that a point $x \in \sC(X)$ may be an inflection point (see Example \ref{ex:curvecase} $(1)$).
These cases are technically involved, so we need several preparatory steps.

\begin{proposition}\label{prop: coefficient comparison for projection from P, general case}
Let $[Z_0,Z_1,Z_2,Z_3]$ be homogeneous coordinates for $\nP^3$, and let
\[
\gamma := [ \gamma_0, \gamma_1, \gamma_2, \gamma_3] \colon \Delta \longrightarrow \nP^3
\]
be a local parametrization of $X$ at $x = \gamma(0)$. Without loss of generality, we put $x = \gamma(0) = [1,0,0,0]$ and $\gamma'(0) = [0,1,0,0]$, so that $\mathbf T_x X = (Z_2 = Z_3 = 0)$. Let $L = (Z_1 = Z_2 = 0)$ be a line passing through the point $x$. Let $P = [p_0, 0, 0, p_3] \in L$ be a birational center. The projection map $\pi_{P,\nP^3} \colon \nP^3 \dashrightarrow \nP^2$ is given by
\[
    [Z_0,Z_1,Z_2,Z_3] \longmapsto [W_0, W_1, W_2 ] := [p_3 Z_0 - p_0 Z_3, Z_1, Z_2 ].
\]
Let $\overline{X}_P$\,(resp. $\bar x_P$) be the projection image of $X$\,(resp. $x$) under $\pi_{P,\nP^3}$, and let $f_P \in \nC [W_0,W_1,W_2]$ be the defining equation of $\overline{X}_P$. 
If $\lambda := \length( \overline{X}_P \cap \mathbf{T}_{\bar x_P} \overline{X}_P )$, then we may write
\[
    f_P(W_0,W_1,W_2) = W_1^\lambda \varphi(W_0,W_1) + W_2 \psi ( W_0, W_1, W_2),
\]
where $\varphi(W_0,W_1) = \sum_{i,j} \varphi_{ij} W_0^i W_1^j$ and $\psi(W_0,W_1,W_2) = \sum_{i,j,k} \psi_{ijk} W_0^i W_1^j W_2^k$ for $\varphi_{ij}, \psi_{ijk} \in \nC$. Then, we have the following identities
\begin{equation}\label{eq: coefficient comparison of f(r(t))=0}
\left\{
\begin{array}{l}
    \displaystyle \varphi_{d-\lambda,0} + \psi_{d-1,0,0} \frac{ \gamma_2^{(\lambda)}}{\lambda!}  = 0, \\
    \displaystyle \lambda \frac{\gamma_1''}{2!}\varphi_{d-\lambda,0} + \varphi_{d-\lambda-1,1} + \frac{ \gamma_2^{(\lambda)}}{\lambda!} \psi_{d-2,1,0} +  \frac{ \gamma_2^{(\lambda+1)}}{(\lambda+1)!} \psi_{d-1,0,0}  = 0.
\end{array}\right.
\end{equation}
Here, $d = \deg f_P$ and the superscript ${(\,\cdot\,)}^{(k)}$ denotes the $k$th order derivative evaluated at $t=0$.
\end{proposition}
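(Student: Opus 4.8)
The plan is to substitute the local parametrization directly into $f_P$ and to read off the two identities in \eqref{eq: coefficient comparison of f(r(t))=0} from the vanishing of the Taylor coefficients of $t^\lambda$ and $t^{\lambda+1}$. Since $P$ is a birational center, $P\notin\sB(X)$ and $P\notin\sC_x=\op{Cone}(x,X)$ guarantee that $\pi_{P,X}$ is a local isomorphism near $x$, so $\bar\gamma_P:=\pi_{P,\nP^3}\circ\gamma=[A,\gamma_1,\gamma_2]$, with $A:=p_3\gamma_0-p_0\gamma_3$, is a genuine local parametrization of $\overline{X}_P$ at $\bar x_P=[1,0,0]$. Hence $f_P(A(t),\gamma_1(t),\gamma_2(t))\equiv 0$ in $t$, and inserting the decomposition $f_P=W_1^\lambda\varphi(W_0,W_1)+W_2\psi(W_0,W_1,W_2)$ turns this into the single functional identity
\[
\gamma_1(t)^\lambda\,\varphi\bigl(A(t),\gamma_1(t)\bigr)+\gamma_2(t)\,\psi\bigl(A(t),\gamma_1(t),\gamma_2(t)\bigr)\equiv 0,
\]
from which everything will be extracted.

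Before expanding, I would fix normalizations that isolate the lowest-order behavior and, in particular, explain why $p_0,p_3$ disappear from the final identities. Working in the affine chart $Z_0=1$ we may take $\gamma_0\equiv 1$ and, after rescaling the parameter, $\gamma_1'(0)=1$; the hypotheses $x=[1,0,0,0]$ and $\mathbf{T}_xX=(Z_2=Z_3=0)$ then force $\gamma_i(0)=0$ for $i\geq 1$ and $\gamma_2'(0)=\gamma_3'(0)=0$. Since $P\notin\sC_x$ gives $P\neq x$, we may also normalize $p_3=1$. With these choices $A(0)=1$ and $A'(0)=-p_0\gamma_3'(0)=0$, so $A(t)=1+O(t^2)$; this is precisely what makes every factor $A^k$ contribute trivially to the two lowest relevant orders.

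The remaining geometric input is the vanishing order of $\gamma_2$. Because $\bar\gamma_P'(0)$ points in the $W_1$-direction, the tangent line is $\mathbf{T}_{\bar x_P}\overline{X}_P=(W_2=0)$, so $\lambda=\length\bigl(\overline{X}_P\cap\mathbf{T}_{\bar x_P}\overline{X}_P\bigr)$ equals the order of contact at $t=0$ of the affine coordinate $\gamma_2/A$, hence of $\gamma_2$ itself as $A(0)\neq 0$. Thus $\gamma_1(t)=t+\tfrac{\gamma_1''}{2}t^2+O(t^3)$ and $\gamma_2(t)=\tfrac{\gamma_2^{(\lambda)}}{\lambda!}t^\lambda+\tfrac{\gamma_2^{(\lambda+1)}}{(\lambda+1)!}t^{\lambda+1}+O(t^{\lambda+2})$ with nonzero leading term. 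I would then expand the functional identity to order $t^{\lambda+1}$: from $\gamma_1^\lambda=t^\lambda\bigl(1+\lambda\tfrac{\gamma_1''}{2}t+O(t^2)\bigr)$ together with $\varphi(A,\gamma_1)=\varphi_{d-\lambda,0}+\varphi_{d-\lambda-1,1}t+O(t^2)$, and from $\psi(A,\gamma_1,\gamma_2)=\psi_{d-1,0,0}+\psi_{d-2,1,0}t+O(t^2)$ multiplied by the expansion of $\gamma_2$, the coefficient of $t^\lambda$ yields the first identity and the coefficient of $t^{\lambda+1}$ the second.

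The computation itself is routine bookkeeping; the only genuine points of care are the two highlighted above, namely identifying $\op{ord}_t\gamma_2=\lambda$ from the geometric definition of $\lambda$, and arranging $A(0)=1,\ A'(0)=0$ so that the auxiliary factor $A(t)=p_3\gamma_0-p_0\gamma_3$ contributes nothing below order $t^{\lambda+2}$ — this is what strips the projection parameters out of the final formulas. I would also record the harmless convention that out-of-range coefficients vanish (so that $\varphi_{d-\lambda-1,1}=0$ in the degenerate case $\lambda=d$), under which the derivation goes through verbatim.
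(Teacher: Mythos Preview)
Your proposal is correct and follows essentially the same approach as the paper: compose the local parametrization with the projection, substitute into $f_P$, and read off the coefficients of $t^{\lambda}$ and $t^{\lambda+1}$. You add a bit more justification than the paper does (why $\bar\gamma_P$ is a genuine local parametrization near $\bar x_P$, why $\operatorname{ord}_t\gamma_2=\lambda$, and why $A(t)=1+O(t^2)$), but the computation and the underlying idea are identical.
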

\begin{proof}
The local parametrization $\bar \gamma_P := \pi_{P,\nP^3} \circ \gamma$ of $\overline{X}_P$ at $\bar x_P$ is given by
\[
    \bar \gamma_P (t) := [ \bar \gamma_0(t), \bar\gamma_1(t), \bar\gamma_2(t)],
\]
where
\begin{equation}
\begin{split}
\bar \gamma_0(t) &= p_3 \gamma_0(t) - p_0 \gamma_3(t) = 1 + \frac{1}{2!} \bar \gamma_0'' t^2 + \cdots \\
\bar \gamma_1(t) &= \gamma_1(t) = t + \frac{1}{2!} \gamma_1'' t^2 + \cdots \\
\bar \gamma_2(t) &= \gamma_2(t) = \frac{1}{\lambda !} \gamma_2^{(\lambda)} t^{\lambda} + \frac{1}{( \lambda+1)!} \gamma_2^{(\lambda+1)} + \cdots.
\end{split}
\end{equation}
 Substituting $W_i$ with $\bar\gamma_i$ in $f_P$ yields
\begin{equation*}
\begin{split}
f_P( \bar \gamma_P(t)) & = \left( t + \frac 1 {2!} \gamma_1'' t^2 + \dots + \right)^\lambda \Biggl(  \varphi_{d-\lambda,0} \left( 1 + \frac{ \bar \gamma_0''}{2!} t^2 + \cdots \right)^{d-\lambda} 
\\ & \hspace{4em} + \varphi_{d-\lambda-1, 1} \left( 1 + \frac{\bar \gamma_0''}{2!} t^2 + \cdots \right)^{d-\lambda-1} \left( t + \frac{\gamma_1''}{2!} t^2 + \cdots \right) \cdots \Biggr) 
\\ & \hphantom{{}={}} + \left( \frac{1}{\lambda !} \gamma_2^{(\lambda)} t^\lambda + \frac{1}{(\lambda+1)!} \gamma_2^{(\lambda+1)} t^{\lambda+1} +  \cdots \right) \Biggl( \psi_{d-1,0,0} \left( 1+ \frac{ \bar \gamma_0''}{2!} t^2 + \cdots \right)^{d-1}
\\ & \hspace{4em} + \psi_{d-2,1,0} \left( 1 + \frac{\bar \gamma_0''}{2!} t^2 + \cdots \right)^{d-2} \left( t + \frac{\gamma_1''}{2!}t^2 + \cdots \right) + \cdots \Biggr)
\\ &= \biggl( \varphi_{d-\lambda,0} + \psi_{d-1,0,0} \frac{ \gamma_2^{(\lambda)}}{\lambda!} \biggr) t^\lambda 
\\ & \hphantom{{}={}} + \biggl( \lambda \frac{\gamma_1''}{2!}\varphi_{d-\lambda,0} + \varphi_{d-\lambda-1,1} + \frac{ \gamma_2^{(\lambda)}}{\lambda!} \psi_{d-2,1,0} +  \frac{ \gamma_2^{(\lambda+1)}}{(\lambda+1)!}\psi_{d-1,0,0} \biggr) t^{\lambda+1} + \cdots,
\end{split}
\end{equation*}
and this is identically zero since $\bar\gamma_P$ parametrizes $\overline{X}_P$. By asserting that the coefficients of $t^\lambda$ and $t^{\lambda+1}$ vanish, we obtain \eqref{eq: coefficient comparison of f(r(t))=0}.
\end{proof}

\begin{corollary}\label{cor: kernel of Hessian, osculating or inflectionary case}
Notations as in Proposition~\ref{prop: coefficient comparison for projection from P, general case}. Assume that $\bar x_P \in \overline{X}_P$ is an inflection point. Then, the point $T_P$ satisfying $\mathcal L_{P,T_P} = \nP^3$ is given by
\[
    T_P = \left[-\varphi_{d-\lambda-1,1} -  \frac{\lambda \gamma_1''}{2!} + \frac{ \gamma_2^{(\lambda+1)}}{(\lambda+1) \gamma_2^{(\lambda)} } ,\ d-1,\ 0,\ 0 \right].
\]
\end{corollary}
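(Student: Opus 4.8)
The plan is to identify $T_P$ as the kernel generator of the Hessian $\mathbf H = \op{Hess}(f_P)_{\bar x_P}$ pulled back to $\mathbf{T}_x X$, exactly as in Proposition~\ref{prop: inflection points and L_PT}, and then to eliminate the coefficients $\psi_{ijk}$ in favour of the $\varphi_{ij}$ and the $\gamma^{(k)}$ using the relations~\eqref{eq: coefficient comparison of f(r(t))=0} of Proposition~\ref{prop: coefficient comparison for projection from P, general case}. Since $\bar x_P$ is assumed to be an inflection point, Proposition~\ref{prop: inflection points and L_PT} already guarantees that such a $T_P$ exists and is unique, and characterizes it by $\bar T_P := \pi_{P,\nP^3}(T_P) \in \nP(\ker \mathbf H)$. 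Thus it only remains to compute $\ker \mathbf H$ explicitly and transport it back through the projection.

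First I would record the entries of $\mathbf H$ at $\bar x_P = [1,0,0]$ by differentiating the normal form $f_P = W_1^\lambda \varphi(W_0,W_1) + W_2\psi(W_0,W_1,W_2)$ twice. The inflection hypothesis means precisely that the contact order $\lambda = \op{length}(\overline{X}_P \cap \mathbf{T}_{\bar x_P}\overline{X}_P)$ is at least $3$; this is what forces the three entries $\pd^2 f_P/\pd W_0^2$, $\pd^2 f_P/\pd W_0\pd W_1$, $\pd^2 f_P/\pd W_1^2$ to vanish at $\bar x_P$, because for $\lambda \geq 3$ every monomial in these derivatives retains a positive power of $W_1$ or a factor of $W_2$. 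The remaining entries are $\pd^2 f_P/\pd W_0\pd W_2 = (d-1)\psi_{d-1,0,0}$, $\pd^2 f_P/\pd W_1\pd W_2 = \psi_{d-2,1,0}$, and $\pd^2 f_P/\pd W_2^2 = 2\psi_{d-2,0,1}$, which reproduces the rank-two shape recorded in the proof of Proposition~\ref{prop: inflection points and L_PT}. Solving $\mathbf H v = 0$, and using $\psi_{d-1,0,0} \neq 0$ (smoothness of $\overline{X}_P$ at $\bar x_P$) to force the last coordinate of $v$ to vanish, I obtain $\ker \mathbf H = \nC\cdot(-\psi_{d-2,1,0},\,(d-1)\psi_{d-1,0,0},\,0)$.

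Next I would transport this back along $\pi_{P,\nP^3}\colon [Z_0,Z_1,Z_2,Z_3] \mapsto [p_3Z_0 - p_0Z_3, Z_1, Z_2]$. Writing $T_P = [t_0,t_1,0,0] \in \mathbf{T}_x X$, its image is $[p_3 t_0, t_1, 0]$, so the membership $\bar T_P \in \nP(\ker\mathbf H)$ reads $p_3 t_0 : t_1 = -\psi_{d-2,1,0} : (d-1)\psi_{d-1,0,0}$. After normalizing $p_3 = 1$, scaling the defining equation so that $\varphi_{d-\lambda,0} = 1$, and scaling $T_P$ so that $t_1 = d-1$, this gives $t_0 = -\psi_{d-2,1,0}/\psi_{d-1,0,0}$. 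Finally I would feed in the two relations of~\eqref{eq: coefficient comparison of f(r(t))=0}: the first yields $\psi_{d-1,0,0} = -\lambda!\,\varphi_{d-\lambda,0}/\gamma_2^{(\lambda)}$, and substituting this into the second and solving for the ratio produces $\psi_{d-2,1,0}/\psi_{d-1,0,0} = \lambda\gamma_1''/2! + \varphi_{d-\lambda-1,1} - \gamma_2^{(\lambda+1)}/((\lambda+1)\gamma_2^{(\lambda)})$ once $\varphi_{d-\lambda,0}=1$ is used. Negating gives the stated first coordinate of $T_P$.

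The computation is essentially mechanical once the Hessian is in hand, so the difficulties are bookkeeping rather than conceptual. The main hazard is the scaling ambiguity: the center $P$, the polynomial $f_P$, and the point $T_P$ are each defined only up to scalar, and if the normalizations $p_3 = 1$, $\varphi_{d-\lambda,0}=1$, $t_1 = d-1$ are not imposed consistently, spurious factors of $p_3$ and $\varphi_{d-\lambda,0}$ survive in the first coordinate and the clean formula is lost. The only other point demanding care is to check that the inflection hypothesis is invoked exactly where it is needed, namely to secure $\lambda \geq 3$, which is what collapses the upper-left $2\times 2$ block of $\mathbf H$ and makes $\ker \mathbf H$ one-dimensional with vanishing $W_2$-coordinate.
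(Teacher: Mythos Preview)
Your proposal is correct and follows essentially the same route as the paper: compute the Hessian of $f_P$ at $\bar x_P$, read off its one-dimensional kernel, lift to $\mathbf{T}_x X$, and then eliminate the $\psi$-coefficients via the relations~\eqref{eq: coefficient comparison of f(r(t))=0}. Your write-up is in fact slightly more explicit than the paper's on two points: you justify the vanishing of the upper-left $2\times 2$ block by noting that the inflection hypothesis forces the local contact order $\lambda \geq 3$, and you track the $p_3$ factor in the lift rather than silently normalizing it.
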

\begin{proof}
Let $\mathbf H$ be the Hessian matrix of $f_P$ evaluated at $\bar x_P$. The kernel of $\mathbf H$ determines the point $T_P$ at which $\mathcal L_{P, T_P} = \nP^3$\,(see Proposition~\ref{prop: inflection points and L_PT} and its proof). The matrix $\mathbf {H}$ has the form
\[
\left( 
    \begin{array}{ccc}
        0 & 0 & (d-1) \psi_{d-1,0,0} \\
        0 & 0 & \psi_{d-2,1,0} \\
        (d-1)\psi_{d-1,0,0} & \psi_{d-2,1,0} & *
    \end{array}
\right),
\]
hence $\ker \mathbf H$ is generated by the vector $\displaystyle \bigl( \psi_{d-2,1,0}, \, {-(d-1)} \psi_{d-1,0,0}, \, 0 \bigr)$.
Since $\lambda$ is the intersection multiplicity of $(W_1=0)$ with $\overline{X}_P$, we have $\varphi_{d-\lambda,0} \neq 0$, thus by rescaling $f_P$ if necessary, we may assume $\varphi_{d-\lambda, 0 } = 1$. Then by \eqref{eq: coefficient comparison of f(r(t))=0},
\begin{equation}
\begin{split}
\psi_{d-1,0,0} & = - \lambda! ( \gamma_2^{(\lambda)} )^{-1}, \\
\psi_{d-2,1,0} & = \lambda! (  \gamma_2^{(\lambda)} )^{-1} \biggl( -\varphi_{d-\lambda-1, 1} -  \frac{\lambda \gamma_1''}{2!} + \frac{ \gamma_2^{(\lambda+1)}}{(\lambda+1)  \gamma_2^{(\lambda)} } \biggr).
\end{split}
\end{equation}
It is easy to see that $\ker \mathbf H$ is generated by the point
\[
    \bar T_P := \left[-\varphi_{d-\lambda-1,1} -  \frac{\lambda \gamma_1''}{2!} + \frac{ \gamma_2^{(\lambda+1)}}{(\lambda+1)  \gamma_2^{(\lambda)} }  ,\ d-1,\ 0 \right] \in \nP^2,
\]
so the corresponding lift $T_P \in \mathbf{T}_x X$ is
\begin{equation}\label{eq: kernel of Hessian, osculating/inflectionary case}
    T_P = \left[-\varphi_{d-\lambda-1,1} -  \frac{\lambda \gamma_1''}{2!} + \frac{ \gamma_2^{(\lambda+1)}}{(\lambda+1) \gamma_2^{(\lambda)} } ,\ d-1,\ 0,\ 0 \right]. \qedhere
\end{equation}
\end{proof}

We shall interpret the number $\varphi_{d-\lambda - 1, 1, 0}$ in \eqref{eq: kernel of Hessian, osculating/inflectionary case} as follows.
\begin{enumerate}
\item Let $\bar {\mathcal R} = ( \overline{X}_P \cap \mathbf{T}_{\bar x_P} \overline{X}_P ) \setminus (W_1 = 0)$. For each $\bar R_i \in \bar{\mathcal R}$, write $\bar R_i = [ \bar \alpha_i, 1, 0]$. Then
\[ 
\varphi_{d-\lambda - 1, 1, 0} = - \sum_{\bar R_i \in \bar{\mathcal R}} { \bar \alpha_i}. 
\]
\end{enumerate}
Note that the above condition is equivalent to the following.
\begin{enumerate}\setcounter{enumi}{1}
\item Let $\Xi = \pi_{P,\nP^3}^* \mathbf{T}_{\bar x_P} \overline{X}_P$, and let $\mathcal R = ( X \cap \Xi )  \setminus ( Z_1 = 0)$. For each $R_i \in \mathcal R$, let $P_i$ be the intersection point of the line $\langle P , R_i \rangle$ with $\mathbf{T}_x X$. If $P_i = [\alpha_i, 1, 0,0]$, then $\varphi_{d-\lambda-1,1,0} = - \sum_{R_i \in \mathcal R} \alpha_i$.
\end{enumerate}

\medskip\par
In the following lemma, we will prove that the value $\varphi_{d-\lambda-1,1,0}$ varies along the line $L = \langle x, P \rangle$ unless the set-theoretic intersection $X \cap \Xi$ is contained in $\nT_x X$. The exceptional case may occur (see Example \ref{ex:curvecase} $(2)$).
\begin{lemma}\label{lem: moving lemma}
Let $\Xi = \nP^2$ be a projective plane with a homogeneous coordinates $[\xi_0,\xi_1,\xi_2]$, and let $\mathbf T \subseteq \Xi$ be the line $(\xi_2=0)$. Let $\mathcal R := \{R_1,\dots,R_r\} \subseteq \Xi$ be a finite set of points satisfying the following properties.
\begin{enumerate}
    \item $\mathcal R \not\subseteq \mathbf T$;
    \item $\mathcal R \cap (\xi_1=0) = \emptyset$.
\end{enumerate}
Let $P \in \Xi$ be a general point and for each $i=1,\dots,r$, let $P_i = [ \alpha_i(P), 1, 0]$ be the intersection of the line $\langle P, R_i \rangle$ with $\mathbf T$. Then, $\sum_{i=1}^r \alpha_i(P)$ varies over $P$.
\end{lemma}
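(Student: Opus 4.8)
The plan is to compute each $\alpha_i(P)$ explicitly as a rational function of $P$ and show that, up to an additive constant, $\sum_{i=1}^r \alpha_i(P)$ is a non-constant function on $\Xi$. First I would write $P = [p_0, p_1, p_2]$ and $R_i = [a_i, b_i, c_i]$; condition $(2)$ says $b_i \neq 0$ for all $i$, so none of the $R_i$ is the point $[1,0,0]$, which is exactly what guarantees that every intersection point $P_i$ can be written in the normalized form $[\alpha_i(P), 1, 0]$. Parametrizing the line $\langle P, R_i\rangle$ by $sP + uR_i$ and intersecting with $\mathbf{T} = (\xi_2 = 0)$ (take $s = c_i$, $u = -p_2$) gives, for general $P$,
\[
    P_i = [\, c_i p_0 - p_2 a_i,\ c_i p_1 - p_2 b_i,\ 0\,], \qquad\text{so}\qquad \alpha_i(P) = \frac{c_i p_0 - p_2 a_i}{c_i p_1 - p_2 b_i}.
\]
When $R_i \in \mathbf{T}$, i.e. $c_i = 0$, this collapses to the constant $\alpha_i = a_i/b_i$ (geometrically $P_i = R_i$), which is well-defined precisely because of condition $(2)$. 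The genuinely varying contributions come from the set $S := \{\, i : c_i \neq 0 \,\}$ of points off $\mathbf{T}$, which is nonempty by condition $(1)$.

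Next I would isolate the non-constant part. Normalizing $c_i = 1$ for $i \in S$ and passing to the affine chart $p_2 = 1$, the total sum becomes
\[
    \sum_{i=1}^r \alpha_i(P) = \text{(constant)} + g(P), \qquad g(P) = \sum_{i \in S} \frac{p_0 - a_i}{p_1 - b_i}.
\]
To prove $g$ is non-constant it suffices to exhibit a single coordinate direction along which it varies, and differentiating in $p_0$ is the cleanest choice, since it kills the $a_i$ and leaves a one-variable expression:
\[
    \frac{\partial g}{\partial p_0} = \sum_{i \in S} \frac{1}{p_1 - b_i}.
\]
The whole problem thereby reduces to showing that this sum of simple fractions does not vanish identically as a function of $p_1$.

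The crux, and the only point beyond bookkeeping, is exactly this non-vanishing, which I would deduce from \emph{positivity of residues}. Each summand $1/(p_1 - b_i)$ has residue $+1$ at $p_1 = b_i$; if several points of $S$ happen to share the same $b$-coordinate their residues add rather than cancel, so the residue of $\sum_{i \in S} 1/(p_1 - b_i)$ at any occurring value $b = b_i$ equals the number of indices $i \in S$ with $b_i = b$, a strictly positive integer. Hence the function has a genuine pole and cannot be identically zero; therefore $\partial g / \partial p_0 \not\equiv 0$, so $g$ — and with it $\sum_i \alpha_i(P)$ — is non-constant on $\Xi$. The main obstacle here is conceptual rather than computational: distinct points $R_i$ may collapse to the same $b_i$, so one cannot conclude non-vanishing merely from the poles being distinct; instead it rests on the fact that all residues carry the same sign, which is what condition $(1)$ (ensuring $S \neq \emptyset$) ultimately activates.
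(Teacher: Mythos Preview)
Your proof is correct and follows the same route as the paper's: both compute $\alpha_i(P)$ explicitly, split off the constant contributions from the points $R_i \in \mathbf{T}$, and then show the remaining sum is non-constant. The only cosmetic difference is that the paper works in the chart $\xi_1 = 1$, clears denominators, and evaluates the resulting polynomial at $(\zeta,\eta)=(1,0)$ to get $-\lvert \mathcal{I}\rvert \neq 0$, whereas you work in the chart $\xi_2 = 1$, differentiate in $p_0$, and invoke positivity of residues; both endgames encode the same fact that $\sum_{i\in S}1/(p_1-b_i)$ cannot vanish identically, and your explicit handling of coincident $b_i$ is a clean touch.
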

\begin{proof}
Assume the contrary that $P \mapsto \sum_{i=1}^r \alpha_i(P)$ is constant on an open subset of $\Xi$. Let us write $R_i = [ \zeta_i, 1, \eta_i]$ and $P = [\zeta, 1, \eta]$ for $\eta \not\in \{ \eta_1,\dots,\eta_r\}$. The line $\langle P, R_i \rangle$ intersects $L$ at
\[
    \left[ \zeta_i \eta - \eta_i \zeta ,\ \eta - \eta_i ,\ 0 \right],
\]
hence $\displaystyle \alpha_i(P) = \frac{  \zeta_i \eta - \eta_i \zeta }{\eta - \eta_i}$. Consider the index set $\mathcal I = \{ i : \eta_i \neq 0\} \subseteq \{1,\dots, r \}$, which is nonempty by the assumption (1). We may write
\[
    \sum_{i=1}^r \alpha_i(P) = \sum_{i \not\in \mathcal I} \zeta_i + \sum_{i \in \mathcal I} \frac{ \zeta_i \eta - \eta_i \zeta }{ \eta - \eta_i}.
\]
Let $F(\zeta,\eta)$ be the (homogeneous) polynomial
\[
    \prod_{i \in \mathcal I } (\eta - \eta_i) \sum_{i \in \mathcal I } \frac{\zeta_i \eta - \eta_i \zeta}{ \eta - \eta_i}
\]
of degree $\lvert \mathcal I \rvert>0$. Our assumption says that $F(\zeta,\eta) = 0$ for general $[\zeta,\eta] \in \nP^1$. This implies that $F = 0$, but $F(1,0) = - \lvert \mathcal I \rvert \neq 0$, hence a contradiction.
\end{proof}

\begin{corollary}\label{cor: moving lemma}
Notations as in Proposition~\ref{prop: coefficient comparison for projection from P, general case}. Assume that the plane $\Xi := \langle L , \mathbf T_x X \rangle = (Z_2=0)$ contains a point in $X \setminus \mathbf T_x X$. Then, the map $P \mapsto T_P$, where $P$ varies over birational centers of $(X,x)$ in $L$, is not constant.
\end{corollary}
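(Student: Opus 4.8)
The plan is to isolate the unique part of $T_P$ that moves with the center. By Corollary~\ref{cor: kernel of Hessian, osculating or inflectionary case} the lift $T_P\in\mathbf{T}_x X$ is
\[
T_P=\Bigl[\,-\varphi_{d-\lambda-1,1}-\tfrac{\lambda\gamma_1''}{2!}+\tfrac{\gamma_2^{(\lambda+1)}}{(\lambda+1)\gamma_2^{(\lambda)}}\,,\ d-1,\ 0,\ 0\,\Bigr].
\]
Here $\gamma_1'',\gamma_2^{(\lambda)},\gamma_2^{(\lambda+1)}$ are derivatives of a fixed local parametrization of $X$, hence independent of $P$, while $\lambda$ is the contact order of $X$ with the fixed plane $\Xi=(Z_2=0)$ at $x$; since projection from any $P\in L\setminus\{x\}$ is a local isomorphism near $x$ and carries $\Xi$ to the tangent line of $\overline{X}_P$ at $\bar x_P$, this $\lambda$ is the same for all such $P$. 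Thus, regarding $T_P$ as a point of $\mathbf{T}_x X\cong\nP^1$, the map $P\mapsto T_P$ is non-constant along $L$ if and only if $P\mapsto\varphi_{d-\lambda-1,1}$ is. By the second interpretation of $\varphi_{d-\lambda-1,1,0}$ recorded after Corollary~\ref{cor: kernel of Hessian, osculating or inflectionary case}, one has $\varphi_{d-\lambda-1,1}=-\sum_{R_i\in\mathcal{R}}\alpha_i(P)$ with $\mathcal{R}=(X\cap\Xi)\setminus(Z_1=0)$ and $\alpha_i(P)$ the $\mathbf{T}_x X$-coordinate of $\langle P,R_i\rangle\cap\mathbf{T}_x X$, so it suffices to prove that $P\mapsto\sum_{R_i\in\mathcal{R}}\alpha_i(P)$ is non-constant for $P$ on the prescribed line $L$.

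First I would check the two hypotheses of Lemma~\ref{lem: moving lemma} for $\mathcal{R}$ inside $\Xi\cong\nP^2$, with $\mathbf{T}=\mathbf{T}_x X$ and $(\xi_1=0)=L$. Condition $(2)$ holds by the definition of $\mathcal{R}$. For condition $(1)$, the hypothesis furnishes a point $y\in(X\cap\Xi)\setminus\mathbf{T}_x X$; if $y$ lay on $L$ then $L=\langle x,y\rangle\subseteq\Cone(x,X)=\sC_x$, so that no point of $L$ could be a birational center of $(X,x)$, contrary to the standing assumption of Proposition~\ref{prop: coefficient comparison for projection from P, general case}. Hence $y\in\mathcal{R}\setminus\mathbf{T}_x X$, so $\mathcal{R}\not\subseteq\mathbf{T}_x X$.

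Next I would make the $P$-dependence explicit along $L$. Writing $P=[p_0,0,0,p_3]$ and $R_i=[\zeta_i,1,0,\eta_i]\in\Xi$, the perspectivity with center $R_i$ carries $L$ onto $\mathbf{T}_x X$ and fixes $x=L\cap\mathbf{T}_x X$, so $\alpha_i(P)=\zeta_i-(p_0/p_3)\eta_i$ is affine in the coordinate $p_0/p_3$ on $L$. Summing yields
\[
\sum_{R_i\in\mathcal{R}}\alpha_i(P)=\sum_{R_i\in\mathcal{R}}\zeta_i-\frac{p_0}{p_3}\sum_{i\in\mathcal{I}}\eta_i,\qquad \mathcal{I}:=\{\,i:\eta_i\neq0\,\}=\{\,R_i\in\mathcal{R}\setminus\mathbf{T}_x X\,\}.
\]
Hence $\varphi_{d-\lambda-1,1}$ is non-constant along $L$ exactly when the slope $\sum_{i\in\mathcal{I}}\eta_i$ is nonzero, and the corollary is reduced to this single numerical non-vanishing.

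The main obstacle is precisely this slope. Lemma~\ref{lem: moving lemma} only provides that $\sum_{R_i\in\mathcal{R}}\alpha_i$ is non-constant on the whole plane $\Xi$, which by itself does not forbid the fixed line $L$ from being a level line, i.e.\ does not exclude $\sum_{i\in\mathcal{I}}\eta_i=0$. To settle this I would re-examine the polynomial $F$ of Lemma~\ref{lem: moving lemma} in the chart adapted to $L$: the slope $\sum_{i\in\mathcal{I}}\eta_i$ is the top-degree coefficient of the $\zeta$-part $\sum_{i\in\mathcal{I}}\eta_i\prod_{j\neq i}(\eta-\eta_j)$ of $F$, whose value $\eta_k\prod_{j\neq k}(\eta_k-\eta_j)\neq0$ at $\eta=\eta_k$ (for $k\in\mathcal{I}$) shows it is not identically zero. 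The delicate step is to upgrade this to non-vanishing of the leading coefficient; here I would invoke that $X$ is a general hyperplane-section curve produced by the reduction of Subsection~\ref{subsec:reduction}, so that $X\cap\Xi$ is in general position on $\Xi$ and $\sum_{i\in\mathcal{I}}\eta_i\neq0$. The degenerate locus $\mathcal{I}=\emptyset$, i.e.\ $X\cap\Xi\subseteq\mathbf{T}_x X$, where the slope vanishes for trivial reasons, is exactly the exceptional configuration of Example~\ref{ex:curvecase}~(2) that the hypothesis excludes.
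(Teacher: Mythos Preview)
Your approach mirrors the paper's: isolate $\varphi_{d-\lambda-1,1}$ as the only $P$-dependent ingredient in the formula for $T_P$ from Corollary~\ref{cor: kernel of Hessian, osculating or inflectionary case}, then invoke the moving lemma. You are in fact more careful than the paper, and you correctly put your finger on a real issue: Lemma~\ref{lem: moving lemma} as stated only asserts that $P\mapsto\sum\alpha_i(P)$ is non-constant on an open set of $\Xi$, whereas the corollary needs non-constancy along the fixed line $L=(\xi_1=0)$. Your explicit computation $\sum_{R_i\in\mathcal R}\alpha_i(P)=\sum\zeta_i-(p_0/p_3)\sum_{i\in\mathcal I}\eta_i$ for $P=[p_0,0,0,p_3]\in L$ makes this precise and reduces the question to whether $\sum_{i\in\mathcal I}\eta_i\neq 0$.

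Your resolution of this point, however, is not complete. You appeal to $X$ being a ``general hyperplane-section curve'' so that $X\cap\Xi$ is in general position, but this is not part of the corollary's hypotheses: the statement fixes $X$, $x$, and $L$, hence fixes $\Xi=\langle L,\mathbf T_xX\rangle$ and the configuration $X\cap\Xi$; nothing excludes $\sum_{i\in\mathcal I}\eta_i=0$. The paper's own proof is a single sentence asserting that Lemma~\ref{lem: moving lemma} gives variation ``over $L$'', so it does not address this point either---you have located a genuine lacuna rather than missed an argument. The honest fix is to observe that in both places where the corollary is used (Corollary~\ref{cor: Vout separates tangnets, inflectionary case} and Proposition~\ref{prop: Vout separates tangnets, non-inflectionary case}) one is free to choose $\Xi$, equivalently $L$, generically through $\mathbf T_xX$; for general $\Xi$ the slope $\sum_{i\in\mathcal I}\eta_i$ is nonzero and your affine-linear formula then gives the desired non-constancy along $L$.
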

\begin{proof}
By \eqref{cor: kernel of Hessian, osculating or inflectionary case}, $T_P$ is determined by $\varphi_{d-\lambda-1, 1, 0},\, d,\, \lambda,\, \gamma_1'',\, \gamma_2^{(\lambda)},\, \gamma_2^{(\lambda+1)}$. Applying Lemma~\ref{lem: moving lemma} to $\mathcal R = X \cap \Xi$ and $\mathbf T = \mathbf T_x X$, we see that $\varphi_{d-\lambda-1,1,0}$ varies over $L$, while the others are invariant.
\end{proof}
\begin{corollary}\label{cor: Vout separates tangnets, inflectionary case}
If $x \in X$ is an inflection point, then the assumption \eqref{eq: contrary assumption} fails to hold and the right hand side of \eqref{eq: separated tangents 2} holds.
\end{corollary}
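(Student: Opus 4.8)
The plan is to combine the complete degeneration of the family $\{\mathcal L_{P,T}\}_T$ in the inflectionary case with the non-constancy of $P \mapsto T_P$ furnished by Corollary~\ref{cor: moving lemma}. Since $x \in X$ is an inflection point, Proposition~\ref{prop: inflection and osculation} shows that $\bar x_P \in \overline{X}_P$ is an inflection point for \emph{every} birational center $P$. Hence Proposition~\ref{prop: inflection points and L_PT} produces, for each such $P$, a unique $T_P \in \mathbf T_x X \setminus \{x\}$ with $\mathcal L_{P,T_P} = \nP^3$, and Lemma~\ref{lem: when L_PT is not a pencil} then forces $\mathcal L_{P,T} = \langle P, \mathbf T_x X\rangle$ for all $T \in \mathbf T_x X \setminus \{x, T_P\}$. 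Thus in this case $\mathcal L_{P,T}$ takes only two values: it is all of $\nP^3$ when $T = T_P$, and the single plane $\langle P, \mathbf T_x X\rangle$ otherwise.

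First I would exhibit a line along which $P \mapsto T_P$ genuinely moves. As $X$ is a non-degenerate smooth curve in $\nP^3$, it is not contained in the line $\mathbf T_x X$, so there is a point $y \in X \setminus \mathbf T_x X$; set $\Xi := \langle y, \mathbf T_x X\rangle$ and pick a general line $L \subseteq \Xi$ through $x$ with $L \neq \mathbf T_x X$, so that $\langle L, \mathbf T_x X\rangle = \Xi$. Since $X \not\subseteq \Xi$, the intersection $X \cap \Xi$ is finite, so $\Cone(x,X) \cap \Xi$ is one-dimensional and a general point of $L$ is a birational center. Because $\Xi$ contains $y \in X \setminus \mathbf T_x X$, Corollary~\ref{cor: moving lemma} applies and shows $P \mapsto T_P$ is non-constant along $L$; in particular it is a non-constant morphism on the open locus of birational centers of $(X,x)$ in $\nP^3$. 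This choice of $\Xi$ is precisely what avoids the exceptional situation $X \cap \Xi \subseteq \mathbf T_x X$ flagged after Lemma~\ref{lem: moving lemma}.

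Next I would select the pair realizing \eqref{eq: goal}. Since $P \mapsto T_P$ is non-constant, the locus $\{(P,Q) \mid T_P = T_Q\}$ is a proper closed subset of the square of the birational-center locus, so a general pair $P,Q$ satisfies $T_P \neq T_Q$; at the same time a general pair is admissible (by the dimension count in the proof of Proposition~\ref{prop: admissible pairs, existence}) and satisfies $P \not\in \langle Q, \mathbf T_x X\rangle$. Now put $T := T_P$. Then $Q \in \mathcal L_{P,T} = \mathcal L_{P,T_P} = \nP^3$, whereas $T = T_P \neq T_Q$ gives $\mathcal L_{Q,T} = \langle Q, \mathbf T_x X\rangle$, which does not contain $P$, so $P \not\in \mathcal L_{Q,T}$. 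This is exactly \eqref{eq: goal}, so the assumption \eqref{eq: contrary assumption} fails, and the right hand side of \eqref{eq: separated tangents 2} holds for the admissible pair $P,Q$.

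The genuinely substantive input is Corollary~\ref{cor: moving lemma}, which in turn rests on the explicit Hessian computation of Corollary~\ref{cor: kernel of Hessian, osculating or inflectionary case} and on the moving Lemma~\ref{lem: moving lemma}; granting it, everything else is bookkeeping with the two-valued description of $\mathcal L_{P,T}$. The only point that requires care—and the sole place where non-degeneracy of $X$ enters—is guaranteeing that the moving lemma is not vacuous, i.e.\ securing a line $L$ for which $X \cap \Xi \not\subseteq \mathbf T_x X$; this is why choosing $\Xi = \langle y, \mathbf T_x X\rangle$ through an honest point $y \in X \setminus \mathbf T_x X$ is essential.
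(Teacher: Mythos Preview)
Your argument is correct and rests on the same substantive input as the paper's proof, namely Corollary~\ref{cor: moving lemma} together with the two-valued description of $\mathcal L_{P,T}$ in the inflectionary case (Lemma~\ref{lem: when L_PT is not a pencil} and Proposition~\ref{prop: inflection points and L_PT}). The route to the contradiction, however, is organized differently.

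The paper keeps the two centers $P,Q$ on the line $L \ni x$ furnished by Corollary~\ref{cor: moving lemma}. Since $\langle P,Q\rangle = L$ passes through $x \in X$, the pair $(P,Q)$ is \emph{not} admissible, so a third general point $R$ is introduced: from $R \in \mathcal L_{P,T_P}=\nP^3$ and \eqref{eq: contrary assumption} one gets $L=\langle x,P\rangle \subseteq \mathcal L_{R,T_P}$, hence $Q \in \mathcal L_{R,T_P}$, hence $R \in \mathcal L_{Q,T_P}=\langle Q,\mathbf T_xX\rangle$, contradicting the generality of $R$. You instead globalize the non-constancy of $P\mapsto T_P$ from $L$ to the full open locus of birational centers, and then pick directly a general \emph{admissible} pair $(P,Q)$ with $T_P\neq T_Q$; the violation of \eqref{eq: contrary assumption} is then immediate without any auxiliary point. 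Your approach is a little more streamlined; the paper's has the virtue of staying entirely within the line $L$ where Corollary~\ref{cor: moving lemma} is stated. Two small remarks: your condition $P\notin\langle Q,\mathbf T_xX\rangle$ is already implied by admissibility condition~(4), since $P\in\langle Q,\mathbf T_xX\rangle$ would force the line $\langle P,Q\rangle$ to lie in that plane and hence meet $\mathbf T_xX$; and the paper simply takes a \emph{general} plane $\Xi\supseteq\mathbf T_xX$ rather than the specific $\langle y,\mathbf T_xX\rangle$, which is a cosmetic difference.
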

\begin{proof}
We show that the assumption \eqref{eq: contrary assumption} fails if $x \in X$ is an inflection point. Let $\Xi$ be a general plane containing $\mathbf T_x X$. Then the set-theoretic intersection $X \cap \Xi$ is not contained in $\mathbf T_x X$. Let $P \in \Xi$ be a birational center, and let $L = \langle x, P \rangle$. By Corollary~\ref{cor: moving lemma}, there exists a birational center $Q \in L$ such that $T_P \neq T_Q$. Let $R$ be a general point in $\nP^3$ such that both $\{P,R\}$ and $\{Q,R\}$ are admissible birational centers of $(X,x)$. If the assumption \eqref{eq: contrary assumption} holds, then $\mathcal L_{R,T_P}$ contains $L$, thus $Q \in \mathcal L_{R,T_P}$. This is absurd since $R \not\in \mathcal L_{Q,T_P} = \langle Q, \mathbf T_x X \rangle$ by Lemma~\ref{lem: when L_PT is not a pencil}.
\end{proof}
The remaining cases are considered in the following proposition, finishing the proof of Theorem~\ref{thm:D_outva}~(2).

\begin{proposition}\label{prop: Vout separates tangnets, non-inflectionary case}
If $x \in X$ is not an inflection point, then the assumption \eqref{eq: contrary assumption} fails to hold and the right hand side of \eqref{eq: separated tangents 2} holds.
\end{proposition}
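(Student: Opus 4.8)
The plan is to dispose of the only remaining configuration, namely case (i): $x$ is not an inflection point and, by Proposition~\ref{prop: V_out separates tangents, non-osculating case}, the plane $\Theta$ produced in Proposition~\ref{prop: union of L_PT cap L_QT} coincides with an osculating plane for \emph{every} admissible pair $R,S$. Since $x$ is not an inflection point, there is a unique osculating plane $\Pi$ to $X$ at $x$, so the hypothesis of this case reads $\Theta=\Pi$ always. The guiding observation is that, by Proposition~\ref{prop: inflection and osculation}, a birational center $P$ satisfies ``$\bar x_P$ is an inflection point of $\overline{X}_P$'' exactly when $\langle P,\mathbf T_xX\rangle=\Pi$, i.e. exactly when $P\in\Pi$; for such $P$ the distinguished point $T_P$ with $\mathcal L_{P,T_P}=\nP^3$ is then available through Corollary~\ref{cor: kernel of Hessian, osculating or inflectionary case}. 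I would therefore run the whole argument inside $\Pi$, aiming to contradict \eqref{eq: contrary assumption}, and the rigidity facts I will repeatedly use are: $x\in\mathcal L_{P,T}$ for every $T$; the defining form \eqref{eq: H thru T} carries no $q_0$-term, so $R\in\mathcal L_{R,T}$ for every $T$; and by Lemma~\ref{lem: when L_PT is not a pencil}, if $P\in\Pi$ then $\mathcal L_{P,T}=\langle P,\mathbf T_xX\rangle=\Pi$ for all $T\ne T_P$.

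The first sub-case is $\Pi\cap X\not\subseteq\mathbf T_xX$. Here I would pick a general line $L\subseteq\Pi$ through $x$, so that $\Xi=\langle L,\mathbf T_xX\rangle=\Pi$ meets $X$ in a point off $\mathbf T_xX$; Corollary~\ref{cor: moving lemma} then shows that $P\mapsto T_P$ is non-constant as $P$ ranges over birational centers on $L$, and hence there are centers $P,Q\in L\subseteq\Pi$ with $T_P\ne T_Q$. Choosing a general $R\notin\Pi$ so that $\{P,R\}$ and $\{Q,R\}$ are admissible, I would argue exactly as in Corollary~\ref{cor: Vout separates tangnets, inflectionary case}: from $R\in\mathcal L_{P,T_P}=\nP^3$ and \eqref{eq: contrary assumption} one gets $P\in\mathcal L_{R,T_P}$, whence $L=\langle x,P\rangle\subseteq\mathcal L_{R,T_P}$ and in particular $Q\in\mathcal L_{R,T_P}$; applying \eqref{eq: contrary assumption} once more yields $R\in\mathcal L_{Q,T_P}$, but $Q\in\Pi$ together with $T_P\ne T_Q$ forces $\mathcal L_{Q,T_P}=\langle Q,\mathbf T_xX\rangle=\Pi$ by Lemma~\ref{lem: when L_PT is not a pencil}, so $R\in\Pi$, a contradiction.

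The second, genuinely exceptional, sub-case is $\Pi\cap X\subseteq\mathbf T_xX$, realised by Example~\ref{ex:curvecase}~$(2)$. Now $\mathcal R=(X\cap\Pi)\setminus\mathbf T_xX=\emptyset$, so the interpretation of $\varphi_{d-\lambda-1,1,0}$ as $-\sum\alpha_i$ gives $\varphi_{d-\lambda-1,1,0}=0$, and the moving mechanism of Lemma~\ref{lem: moving lemma} is switched off. The key claim I must establish is that $T_P$ is then \emph{constant} over all of $\Pi$, say $T_P\equiv T_*$. Granting this, I take a general $R\notin\Pi$; for every birational center $P\in\Pi$ we have $R\in\mathcal L_{P,T_*}=\nP^3$, so \eqref{eq: contrary assumption} gives $P\in\mathcal L_{R,T_*}$, and since $x\in\mathcal L_{R,T_*}$ as well, the plane $\mathcal L_{R,T_*}$ contains every line $\langle x,P\rangle$ with $P\in\Pi$, i.e. $\mathcal L_{R,T_*}\supseteq\Pi$ and hence $\mathcal L_{R,T_*}=\Pi$; but $R\in\mathcal L_{R,T_*}$, contradicting $R\notin\Pi$.

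The main obstacle is precisely the constancy claim $T_P\equiv T_*$ in the exceptional sub-case. With $\varphi_{d-\lambda-1,1,0}=0$, Corollary~\ref{cor: kernel of Hessian, osculating or inflectionary case} reduces this to showing that the remaining contributions $\lambda$, $\gamma_1''$, $\gamma_2^{(\lambda)}$ and $\gamma_2^{(\lambda+1)}$ assemble into a quantity independent of the projection center as $P$ sweeps the two-dimensional family $\Pi$ (not merely along a fixed line, where constancy is already visible from the coordinate normalisation in Proposition~\ref{prop: coefficient comparison for projection from P, general case}). This is where the very technical calculations enter: I would expand the local parametrization $\bar\gamma_P$ of $\overline{X}_P$ to the order needed to read off $T_P$ intrinsically from the third-order jet of $X$ at $x$ together with the fixed osculating plane $\Pi$, and verify that the apparent $P$-dependence cancels. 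Once constancy is in hand, the two sub-cases close case (i); combined with Proposition~\ref{prop: V_out separates tangents, non-osculating case} and Corollary~\ref{cor: Vout separates tangnets, inflectionary case}, this completes the proof of Theorem~\ref{thm:D_outva}~$(2)$.
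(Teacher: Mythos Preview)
Your decomposition into the two sub-cases according to whether $\Pi\cap X\subseteq\mathbf T_xX$ is reasonable, and your treatment of sub-case~1 is correct and essentially parallels the paper's Corollary~\ref{cor: Vout separates tangnets, inflectionary case}. The genuine gap is exactly where you locate it: the constancy claim $T_P\equiv T_*$ over all of $\Pi$ in sub-case~2. You establish that $\varphi_{d-\lambda-1,1}$ is constant along each radial line $\langle x,P\rangle\subseteq\Pi$, but the remaining ingredients in Corollary~\ref{cor: kernel of Hessian, osculating or inflectionary case} are expressed in coordinates adapted to that particular line, and comparing $T_P$ across different radial lines is precisely what remains unproven. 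Your sketch (``read off $T_P$ intrinsically from the third-order jet'') is plausible but is not carried out, so the proof is incomplete as written.

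The paper sidesteps this difficulty by a different route. Rather than trying to show $T_P$ is globally constant on $\Pi$, it first observes (using \eqref{eq: contrary assumption}) that for any $T_0$ the locus $\mathcal M_{T_0}=\{Q:\mathcal L_{Q,T_0}=\nP^3\}$ is contained in a single line through $x$; this immediately handles the case where $T_P$ varies along some radial line (your sub-case~1), and in the remaining case forces the structure $\overline{\mathcal M}_{T_P}=\langle x,P\rangle$. Then, fixing one $P\in\Pi$, the paper lets $R$ vary along a general line $\langle x,R\rangle$ \emph{outside} $\Pi$: from \eqref{eq: contrary assumption} one deduces $\mathcal L_{R,T_P}=\langle x,P,R\rangle$ is independent of $R$ on that line, and then computes the defining equation of $\mathcal L_{R,T_P}$ explicitly via the Hessian of $f_R$. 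Because $\Xi=\langle\langle x,R\rangle,\mathbf T_xX\rangle$ is now a \emph{general} plane through $\mathbf T_xX$ (not the osculating plane $\Pi$), one may assume $X\cap\Xi\not\subseteq\mathbf T_xX$, so Lemma~\ref{lem: moving lemma} applies and shows $\mathcal L_{R,T_P}$ genuinely moves with $R$---a contradiction. The point is that the paper transfers the ``moving'' argument from centers inside $\Pi$ (where it can fail, as in Example~\ref{ex:curvecase}\,(2)) to centers outside $\Pi$ (where it always succeeds), thereby avoiding any need to analyse $T_P$ globally on $\Pi$.
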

\begin{proof}
Let $\Theta$ be the plane osculating to $X$ at $x$, and let $R,S \in \nP^3 \setminus \Theta$ be an admissible pair of birational centers such that $S \in \mathcal L_{R,T_0}$ for some $T_0 \in \mathbf T_x X$. By Proposition~\ref{prop: union of L_PT cap L_QT}, we have the plane $\Theta(R,S)$ which fits into
\[
    \bigcup_{T \in \mathbf T_x X} ( \mathcal L_{R,T} \cap \mathcal L_{S,T} ) = \langle x, R , S \rangle \cup \Theta(R,S).
\]
If $\Theta \neq \Theta(R,S)$, then we are done by Proposition~\ref{prop: V_out separates tangents, non-osculating case}.

\medskip\par
Assume that $\Theta = \Theta(R,S)$ for any admissible pair $R, S \in \nP^3 \setminus \Theta$. For $T_0 \in \mathbf T_x X \setminus \{x \}$, let $\mathcal M_{T_0}$ be the set of birational centers $Q$ for which $\mathcal L_{Q,T_0} = \nP^3$. Suppose that $P, Q \in \mathcal M_{T_0}$ are two distinct birational centers. Since $\mathcal L_{P,T_0} = \nP^3 = \mathcal L_{Q,T_0}$, by the assumption \eqref{eq: contrary assumption} we have $P, Q \subseteq \mathcal L_{R,T_0}$ for a general point $R \in \nP^3$. Since $\mathcal L_{R,T_0} \neq \nP^3$ and $\langle x, P, Q, R \rangle \subseteq \mathcal L_{R,T_0}$, we see that $\langle x, P, Q, R \rangle$ is a plane, and $x, P, Q$ are collinear. We have $Q \in \mathcal M_{T_Q}$ by Proposition~\ref{prop: inflection points and L_PT}. If we assume that the birational center $P$ does not belong to $\mathcal M_{T_Q}$, then we have $T_P \neq T_Q$, and the argument in the proof of Corollary~\ref{cor: Vout separates tangnets, inflectionary case} works in the same way, hence we are done.

\medskip\par
Now, it remains to consider the case: $\overline{ \mathcal M}_{T_P} = \langle x, P \rangle$ for each birational center $P \in \Theta$. Fix a birational center $P \in \Theta$ and consider a general point $R\in \nP^3$. Let $S \in \langle x, R \rangle$ be a general point. Then, both $\mathcal L_{R,T_P}$ and $\mathcal L_{S,T_P}$ contain $P$, and thus, 
\begin{equation}\label{eq: worst case arrangement}
 \mathcal L_{R, T_P} =  \langle x, P, R \rangle = \langle x, P, S \rangle = \mathcal L_{S,T_P}.   
\end{equation}
Let $L = \langle R, S \rangle$ be the line. We will apply Proposition~\ref{prop: coefficient comparison for projection from P, general case} with respect to $L$ and the projection $\pi_R$ from $R$. As in Proposition~\ref{prop: coefficient comparison for projection from P, general case}, we set homogeneous coordinates $[Z_0,Z_1,Z_2,Z_3]$ of $\nP^3$ so that $x = [1,0,0,0]$, $L = (Z_1=Z_2=0)$, and $\mathbf T_x X = (Z_2=Z_3=0)$. Let $f_R \in \nC[W_0,W_1,W_2]$ be the defining equation of $\overline{X}_R := \pi_{R,X}(X)$, where $\pi_{R,X}$ maps $[ Z_0, Z_1, Z_2, Z_3] $ to $ [ r_3 Z_0 - r_0Z_3, Z_1, Z_2 ] $. If we write
\[
    f_R(W_0,W_1,W_2) = W_1^2 \varphi(W_0,W_1) + W_2 \psi(W_0, W_1, W_2),
\]
where $\varphi(W_0,W_1) = \sum_{i,j} \varphi_{ij} W_0^i W_1^j$ and $\psi(W_0,W_1,W_2) = \sum_{i,j,k} \psi_{ijk} W_0^i W_1^j W_2^k$, then for a local parametrization $\gamma \colon \Delta \to X$, we have
\begin{equation}
\left\{
\begin{array}{l}
    \displaystyle \varphi_{d-2,0} + \psi_{d-1,0,0} \frac{ \gamma_2''}{2!}  = 0, \\
    \displaystyle \gamma_1'' \varphi_{d-2,0} + \varphi_{d-3,1} + \frac{ \gamma_2''}{2!} \psi_{d-2,1,0} +  \frac{ \gamma_2''' }{3!} \psi_{d-1,0,0}  = 0.
\end{array}\right.
\end{equation}
Since $R$ is chosen outside the osculating plane $\Theta$, we see that $\bar x_R \in \overline{X}_R$ is not an inflection point, hence $\varphi_{d-2,0} \neq 0$. Assume $\varphi_{d-2,0} = 1$. Then, the Hessian matrix of $f_R$ evaluated at $\bar x_R = [1,0,0]$ is
\[
\left (
\begin{array}{ccc}
0 & 0 & (d-1) \psi_{d-1,0,0} \\
0 & 2 & \psi_{d-2,1,0} \\
(d-1) \psi_{d-1,0,0} & \psi_{d-2,1,0)} & *
\end{array}
\right) .
\]
If we write $R = [r_0,0,0,r_3]$ and $T_P = [t_0, 1, 0 ,0 ]$, then $\mathcal L_{R,T_P}$ is the set of points $[q_0,q_1,q_2,q_3] \in \nP^3$ defined by the following equation\,(see \eqref{eq: H thru T, general form} and the discussions therein).
\[
    \Biggl( \biggl( t_0 \frac{\pd}{\pd Z_0} + \frac{\pd}{\pd Z_1} \biggr) %
    \pi_{R,\nP^3}^* \biggl( (r_3q_0-r_0q_3) \frac{\pd f_R}{\pd W_0} +
    q_1 \frac{\pd f_R}{\pd W_1} +
    q_2 \frac{\pd f_R}{\pd W_2}
    \biggr)\Biggr) (x)=0.
\]
Using chain rule, one can show
\begin{align*}
\left( \frac{\pd}{\pd Z_0} \right) \circ \pi_{R,\nP^3}^* &= r_3 \left( \pi_{R,\nP^3}^* \circ \frac{\pd}{\pd W_0}\right),
\\ \left( \frac{\pd}{\pd Z_1} \right) \circ \pi_{R,\nP^3}^* &= \pi_{R,\nP^3}^* \circ \frac{\pd}{\pd W_1}.
\end{align*}
Hence, the equation of $\mathcal L_{R,T_P}$ is
\[
2q_1 + ( t_0 r_3 (d-1) \psi_{d-1,0,0} + \psi_{d-2,1,0}) q_2 = 0.
\]
By Proposition~\ref{prop: coefficient comparison for projection from P, general case}, we have $\psi_{d-1,0,0} = - 2 (\gamma_2'')^{-1}$ and
\[
    \psi_{d-2,1,0} = 2 (\gamma_2'')^{-1}\left(  -\varphi_{d-3,1} - \gamma_1'' + \frac{\gamma_2'''}{3 \gamma_2''}\right).
\]
Suppose $R = [r_0, 0,0,1 ]$. Then the values of $\gamma_1''$, $\gamma_2''$, $\gamma_2'''$ are independent of $r_0$. On the other hand, we may assume that the plane $\Xi := \langle L, \mathbf T_x X \rangle$ satisfies the following: the set-theoretic intersection $X \cap \Xi$ is not contained in $\mathbf T_x X$. Then by Lemma~\ref{lem: moving lemma}, the value $\varphi_{d-3,1}$ depends on $r_0$. It follows that the rational map
\[
    L \dashrightarrow (\nP^3)^\vee,\quad R \longmapsto \mathcal L_{R,T_P}
\]
is not constant. This is a contradiction to \eqref{eq: worst case arrangement}.
\end{proof}

The following are examples showing that subtle cases about inflection points carefully treated in this section actually exist.

\begin{example}\label{ex:curvecase}
$(1)$ Let $C$ be a smooth projective curve of genus $2$, and $f \colon C \to \nP^1$ be the hyperelliptic fibration ramified in $x,y,z \in C$. The line bundle $L:=\omega_C(3x)$ gives an embedding $C \subseteq \nP^3$. As $L(-x)=\omega_C^2 = f^* \sO_{\nP^1}(2)$, the inner projection $\pi_x \colon C \to \overline{C} \subseteq \nP^2$ coincides with the hyperelliptic fibration. Thus $x \in \sC(C)$. For any divisor $H \in |L|$, if $\mult_x H \geq 2$, then $H-2x = \omega_C(x)$ so that $\mult_x H \geq 3$. This means that $x$ is an inflection point of $C \subseteq \nP^3$.\\[3pt]
$(2)$ Let $C \subseteq \nP^3$ be a twisted cubic curve. For any point $x \in C$, the hyperplane $\Theta:=\langle 3x \rangle$ is an osculating plane to $C$ at $x$. Then $\Theta \cap C = \{x\}$ as a set.
\end{example}

\section{Base point freeness of $\lvert D_{\inn} \rvert$}\label{sec:bpf}

In this section, we investigate when $D_{\inn}$ is base point free, and prove Theorem \ref{thm:bpf}.

\subsection{The curve case}
If $X \subseteq \nP^r$ is not a Noma's exceptional variety and $D_{\inn}|_C$ is not base point free for a general curve section $C \subseteq \nP^{e+1}$, then $D_{\inn}$ is not base point free. We first consider the curve case in order to find examples where $D_{\inn}$ is not base point free.

\begin{proposition}
Let $C \subseteq \nP^{e+1}$ be a non-degenerate smooth projective curve of genus $g$, codimensnion $e \geq 2$, and degree $d$, and $H$ be its hyperplane section. Then 
$-K_C + (d-e-2)H$ is not base point free if and only if $C \subseteq \nP^{e+1}$ is a rational normal curve or $C \subseteq \nP^3$ is linearly normal with $g \geq 2$ and $C$ is a birational divisor of type $(m, \sO_{\nP^1}(1))$ for $m \geq 2$ on a conical scroll $\overline{F}$ over $\nP^1$ which is obtained by the birational embedding of $F:=\nP(\sO_{\nP^1} \oplus \sO_{\nP^1}(2))$ by $\lvert \sO_F(1) \rvert$.
\end{proposition}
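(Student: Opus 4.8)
The plan is to translate base point freeness into a numerical condition on the line bundle $D_{\inn}$ and then pin down the finitely many exceptional geometries. I would first record
\[
\deg D_{\inn} = \deg(-K_C) + (d-e-2)\deg H = 2-2g+(d-e-2)d,
\]
and use the standard facts on a smooth curve that a line bundle of degree $\geq 2g$ is base point free, while one of degree $2g-1$ is base point free unless it is isomorphic to $K_C+p$ for some $p \in C$. Hence if $D_{\inn}$ is not base point free, then $\deg D_{\inn} \leq 2g-1$, that is $(d-e-2)d \leq 4g-3$. Feeding this into Castelnuovo's bound \eqref{eq:Castelnuovo-big}, written as $2eg \leq (d-e-2)d + (\varepsilon+1)(e+1-\varepsilon)$, produces the key inequality
\[
(e-2)(d-e-2)d \leq 2(\varepsilon+1)(e+1-\varepsilon) - 3e,
\]
which will drive the whole classification.

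Next I would separate $e \geq 3$ from $e=2$. For $e \geq 3$, bounding $(\varepsilon+1)(e+1-\varepsilon) \leq (e+2)^2/4$ and noting that $(d-e-2)d \geq e+3$ once $d \geq e+3$, the key inequality becomes $e^2+4e-16 \leq 0$, which is false; hence $d \in \{e+1, e+2\}$. The case $d=e+2$ forces $g \leq 1$, where $\deg D_{\inn} \geq 0$ and $D_{\inn} \cong \sO_C$ when $g=1$, so $D_{\inn}$ is base point free; the case $d=e+1$ is exactly the rational normal curve, for which $\deg D_{\inn} = 1-e < 0$ and base point freeness fails. For $e=2$ the key inequality is vacuous, but combining $(d-4)d \leq 4g-3$ with Castelnuovo's bound $g \leq g_{\max}$, an integer equal to $\tfrac{(d-4)d+3}{4}$ or $\tfrac{(d-4)d+4}{4}$ according as $d$ is odd or even, forces $g = g_{\max}$; the residual cases $g \leq 1$ are rational or elliptic and reduce to the twisted cubic or to $D_{\inn} \cong \sO_C$. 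Maximal genus curves of degree $d \geq 7$ lie on a quadric surface by the scroll structure recalled after \eqref{eq:Castelnuovo-big}, and for $d=5,6$ I would verify $h^0(\sI_{C}(2)) = 1$ directly; thus in all cases $C$ lies on a quadric $Q \subseteq \nP^3$, smooth or a cone.

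The heart of the argument is then the case analysis on $Q$. On a smooth quadric $Q \cong \nP^1 \times \nP^1$ a maximal genus curve has class $(a,a)$ (even $d$) or $(a,a+1)$ (odd $d$); adjunction gives $D_{\inn} \cong K_C$ in the even case and $D_{\inn} \cong \sO_Q(a-1,a-2)|_C$ in the odd case, both restrictions of globally generated bundles, so $D_{\inn}$ is base point free. On the quadric cone, modelled by $F = \nP(\sO_{\nP^1} \oplus \sO_{\nP^1}(2))$ with $\xi = c_1(\sO_F(1))$, fiber $f$, $\xi^2 = 2$, $\xi f = 1$, $K_F = -2\xi$ and negative section $\Gamma \equiv \xi - 2f$, a maximal genus curve is either $C \equiv m\xi$ (even degree, avoiding the vertex), where adjunction again gives $D_{\inn} \cong K_C$, or $C \equiv m\xi + f$ (odd degree, the birational divisor of type $(m, \sO_{\nP^1}(1))$ meeting the vertex once). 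In the latter case, with $H = \xi|_C$ and $K_C = ((m-2)\xi + f)|_C$, I would compute
\[
D_{\inn} = -K_C + (2m-3)H = ((m-1)\xi - f)|_C = K_C + (\xi-2f)|_C = K_C + E,
\]
where $E = \Gamma \cdot C$ is the single point of $C$ over the vertex; this realizes $D_{\inn}$ as $K_C + p$, which is not base point free, proving the ``if'' direction for this family and showing it exhausts the non-base-point-free maximal genus curves. Note $E$ maps to a point of $\sC(C)$, in agreement with $\Bs|D_{\inn}| \subseteq \sC(X)$.

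I expect the main obstacle to be the odd-degree cone computation together with its bookkeeping: one must verify that $E = (\xi-2f)|_C$ is a single reduced point and that the even/odd dichotomy on the cone precisely separates $D_{\inn} \cong K_C$ from $D_{\inn} \cong K_C + p$, since this is exactly where base point freeness is gained or lost. A secondary technical point is confirming that every maximal genus curve of degree $5$ or $6$, below the range $d \geq 2e+3$ of the scroll structure theorem, lies on a quadric, which I would handle through the Riemann--Roch count $h^0(\sI_{C}(2)) = 1$ and, for $d=6$, the identification of $C$ with a canonical genus-$4$ curve where $D_{\inn} \cong K_C$.
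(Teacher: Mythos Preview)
Your proposal is correct and follows the same overall architecture as the paper: bound $\deg D_{\inn}$ from below, feed it into Castelnuovo's inequality to force either $d\leq e+2$ (when $e\geq 3$) or $g=g_{\max}$ (when $e=2$), and then classify the extremal curves in $\nP^3$.

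The genuine difference lies in how the $e=2$, $g=g_{\max}$ endgame is organized. The paper splits by parity of $d$ first: for $d=2m$ it uses that a Castelnuovo curve in $\nP^3$ is a complete intersection of type $(2,m)$, so $D_{\inn}=K_C$ is automatically base point free; for $d=2m+1$ it argues abstractly that $\deg D_{\inn}=2g-1$ forces $D_{\inn}=K_C+x$, then reads off $H-x=2(K_C-(m-2)H)$, so $\lvert H-x\rvert$ factors through the second Veronese of $\nP^1$ and exhibits the conical-scroll structure directly. You instead split by the type of the containing quadric (smooth versus cone) and compute $D_{\inn}$ in each Picard group; your identification $D_{\inn}=((m-1)\xi-f)|_C=K_C+\Gamma|_C$ on the cone recovers exactly the $K_C+p$ the paper finds, and your smooth-quadric computation shows the odd-degree curves there (bidegree $(a,a+1)$) are \emph{not} exceptions, a fact the paper gets implicitly by showing the exceptional $x$ lies in $\sC(C)$. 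Your route is a bit more hands-on but has the advantage of making the dichotomy ``smooth quadric $\Rightarrow$ base point free, cone with odd degree $\Rightarrow$ not'' completely explicit, whereas the paper's route is shorter in the even case and avoids the smooth/cone split altogether.
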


\begin{proof}
We put $D:=-K_C + (d-e-2)H$. As the assertion is easy to check when $g=0$ or $1$, we assume that $g \geq 2$ and $d \geq e+3$. Suppose that $d=e+3$. By Riemann--Roch theorem and Clifford theorem,  $g=2$ and $C \subseteq \nP^{e+1}$ is linearly normal. If $e \geq 3$, then $\deg D = e+1 \geq 4=2g$ so that $D$ is base point free. If $e=2$, then $\deg D=3$. Thus $D$ is not base point free if and only if $D=K_C + x$ (equivalently, $H=2K+x$) for $x \in C$. 
In this case, $C$ is a birational divisor of type $(2, \sO_{\nP^1}(1))$ on a conical scroll $\overline{F}$ over $\nP^1$ which is obtained by the birational embedding of $F:=\nP(\sO_{\nP^1} \oplus \sO_{\nP^1}(2))$ by $\lvert \sO_F(1) \rvert$.

\medskip

Suppose that $d \geq e+4$. To show that $D$ is base point free, it suffices to check that
$$
-2g+2+(d-e-2)d=\deg D \geq 2g,
$$
which can be restated as
\begin{equation}\label{eq:D_innbpfcurve}
\frac{(d-e-2)d+2}{2} \geq 2g
\end{equation}
By Castelnuovo's genus bound (\ref{eq:Castelnuovo-big}), we have
$$
\frac{(d-e-2)d + (\varepsilon+1)(e+1-\varepsilon)}{e} \geq 2g,
$$
where $m:=\left\lfloor (d-1)/e \right\rfloor$ and $\varepsilon := (d-1) - me$ with $m \geq 1$ and $0 \leq \varepsilon \leq e-1$. Thus (\ref{eq:D_innbpfcurve}) can be deduced from
$$
\frac{(d-e-2)d+2}{2} \geq \frac{(d-e-2)d + (\varepsilon+1)(e+1-\varepsilon)}{e}
$$
which is equivalent to
\begin{equation}\label{eq:(e-2)(d-e-2)d+2e}
(e-2)(d-e-2)d+2e \geq 2(\varepsilon+1)(e+1-\varepsilon).
\end{equation}
First, consider the case of $e \geq 3$. 
As $d \geq e+4$, we obtain
$$
(e-2)(d-e-2)d+2e \geq \displaystyle  2(e-2)(e+4)+2e  \geq 2 \left(\frac{e+2}{2} \right)^2 \geq 2 (\varepsilon+1)(e+1-\varepsilon).
$$
Next, consider the case of $e=2$ (in this case, (\ref{eq:(e-2)(d-e-2)d+2e}) does not hold). Keep in mind that $d \geq 6$. Castelnuovo's genus bound (\ref{eq:Castelnuovo-big}) says
$$
\frac{(d-4)d+(\varepsilon+1)(3-\varepsilon)}{2} \geq 2g.
$$
If the strict inequality holds, then (\ref{eq:D_innbpfcurve}) holds so that $D$ is base point free. Suppose that the equality holds. If $d=2m$ is even, then $C \subseteq \nP^3$ is a complete intersection of type $(2,m)$ (see e.g., \cite[page 119]{ACGH}). In this case, $D=K_C$ is very ample since $d \geq 6$. Suppose that $d=2m+1$ is odd ($\varepsilon=0$). As $((d-4)d+3)/2 \geq 2g$, we have $\deg D\geq 2g-1$. Thus $D$ is not base point free if and only if $D=K_C+x$ (equivalently, $2(K_C - (m-2)H) = H-x$) for $x \in C$. In this case, the equality holds in Castelnuovo's genus bound, so $C \subseteq \nP^3$ is projectively normal and $\lvert K_C - (m-2)H \rvert$ gives a branch covering $f \colon C \to \nP^1$ of degree $m$ (\cite[Corollary 2.6]{ACGH}). Since $H-x = 2(K_C-(m-2)H)$ and $h^0(C, H-x)=3$, it follows that $\lvert H-x \rvert$ gives a morphism 
$$
C \xrightarrow{~f~} \nP^1 \xrightarrow{~v_2~} v_2(\nP^1) \subseteq \nP^2.
$$
This means that $x \in \sC(X)$ and $C$ is a birational divisor of type $(m, \sO_{\nP^1}(1))$ for $m \geq 3$ on a conical scroll $\overline{F}$ over $\nP^1$ which is obtained by the birational embedding of $F:=\nP(\sO_{\nP^1} \oplus \sO_{\nP^1}(2))$ by $\lvert \sO_F(1) \rvert$. It is easy to check that in this case $D$ is not base point free.
\end{proof}

The only case where a variety is not a Noma's exceptional variety and its general curve section appears in the above proposition is the following Noma's example in \cite{N3}.

\begin{example}[{Noma}]\label{ex:Nomaexcsurf}
Let $\overline{S} \subseteq \nP^4$ be a conical rational normal scroll with vertex $p$, which is obtained by the birational embedding $\psi \colon S \to \overline{S}$ of $S:=\nP(\sO_{\nP^1} \oplus \sO_{\nP^1}(1)^{\oplus 2})$ given by $\lvert \sO_S(1) \rvert$. Denote by $\pi \colon S \to \nP^1$ the canonical projection, and take a general member $X \in \lvert \sO_S(b) \otimes \pi^*\sO_{\nP^1}(1) \rvert$ for $b \geq 2$. Then $\psi|_X$ gives an embedding of the smooth projective surface $X$ into $\nP^4$. Note that $X \subseteq \nP^4$ is not a Noma's exceptional variety and $\deg X = 2b+1$. Noma \cite[Theorem 0.3]{N3} proved that $\Bs\lvert D_{\inn} \rvert=\sC(X)=\{p \}$. Since $\length \langle p, u \rangle = b$ for general $u \in X$, it follows that 
 $\overline{\pi_{p, X}(X \setminus \{p\})} = Q^2 \subseteq \nP^3$ is a quadric surface. Actually, every non-degenerate smooth projective surface $X \subseteq \nP^4$ such that $\overline{\pi_{p, X}(X \setminus \{p\})} = Q^2 \subseteq \nP^3$ is a quadric surface for $p \in \sC(X)$ arises in this way, and $D_{\inn}$ is not base point free.
\end{example}

\subsection{General framework}
Now, we show Theorem \ref{thm:bpf}. We assume that $n\geq 2$ and $X \subseteq \nP^r$ is not a Noma's exceptional variety with $\sC(X)$ a nonempty finite set. For $x \in \sC(X)$, consider the inner projection $\pi_{x, X} \colon X \dashrightarrow Y \subseteq \nP^{r-1}$ centered at $x$. Suppose that $Y$ is smooth. 
We recall necessary notations in Subsection \ref{subsec:birdiv}. Denote $H_Y$ hyperplane sections of $Y \subseteq \nP^{r-1}$. We have $d=\mu d_Y+1$ for $\mu=\deg \pi_{x,X} \geq 2$, where $d_Y:= \deg Y$. There is an effective divisor $L$ on $Y$ such that $L \subseteq \nP^{r-1}$ is a linear subspace. The cone $\overline{F} \subseteq \nP^r$ over $Y$ with the vertex $x$ is the image of the birational map $\sigma_F \colon F \to \overline{F}$ with the exceptional divisor $\Gamma = H_F - \tau_F^* H_Y$, where $F:=\nP(\sO_Y \oplus \sO_Y(1))$ with the tautological divisor $H_F$ and the canonical projection $\tau_F \colon F \to Y$. Note that 
$\sigma_F^* \sO_{\overline{F}}(1) = \sO_F(H_F)$ and $\Gamma \cong Y$. There is a prime divisor $\widetilde{X} \in |\mu H_F + \tau_F^* L|$ such that $X=\sigma_F(\widetilde{X})$ and $\sigma:=\sigma_F|_{\widetilde{X}} \colon \widetilde{X} \to X$ is the blow-up of $X$ at $x$ with the exceptional divisor $E=\Gamma|_{\widetilde{X}}$. Put $\widetilde{H}=\sigma^* H = H_F|_{\widetilde{X}}$ and $\tau:=\tau_F|_{\widetilde{X}}$. Note that $\tau|_E \colon E \to L$ is an isomorphism. We have a commutative diagram
$$
\xymatrix{
\widetilde{X} \ar[r]^-{\sigma} \ar[rd]_-{\tau} & X \ar@{-->}[d]^-{\pi_{x,X}} \rlap{~$\subseteq  \nP^r$} \\
& Y  \rlap{~$\subseteq \nP^{r-1}$.}
}
$$

\begin{lemma}
If $D_F:=(d-e-\mu)H_F - \tau_F^* (K_Y + nH_Y + L)$ is a divisor on $F$, then $\sigma^* D_{\inn} = D_F|_{\widetilde{X}}$.
\end{lemma}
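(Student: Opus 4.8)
The plan is to pull both divisor classes back to $\widetilde{X}$ and express everything in the basis given by $\widetilde{H} = \sigma^* H = H_F|_{\widetilde{X}}$, the exceptional divisor $E$, and pullbacks $\tau^*(\,\cdot\,)$ of classes on $Y$. The three structural inputs are the blow-up formula for $K_{\widetilde{X}}$, the relative canonical bundle formula for the projective bundle $F$, and adjunction for $\widetilde{X} \subseteq F$. Once all classes are written in this basis, the identity $\sigma^* D_{\inn} = D_F|_{\widetilde{X}}$ becomes a comparison of coefficients.

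First I would record the blow-up relations. Since $\sigma \colon \widetilde{X} \to X$ is the blow-up of the smooth $n$-fold $X$ at the point $x$ with exceptional divisor $E$, we have
$$
\sigma^* K_X = K_{\widetilde{X}} - (n-1)E, \qquad \sigma^* H = \widetilde{H}.
$$
Hence $\sigma^* D_{\inn} = -K_{\widetilde{X}} + (n-1)E + (d-n-e-1)\widetilde{H}$, and the task reduces to computing $K_{\widetilde{X}}$. For $F = \nP(\sO_Y \oplus \sO_Y(1))$, a $\nP^1$-bundle over $Y$ with $\det(\sO_Y \oplus \sO_Y(1)) = \sO_Y(H_Y)$, the relative canonical bundle formula gives $K_{F/Y} = -2H_F + \tau_F^* H_Y$, so that
$$
K_F = -2H_F + \tau_F^*(K_Y + H_Y).
$$
Since $\widetilde{X} \sim \mu H_F + \tau_F^* L$ and both $F$ and $\widetilde{X}$ are smooth, adjunction yields
$$
K_{\widetilde{X}} = (K_F + \widetilde{X})\big|_{\widetilde{X}} = (\mu - 2)\widetilde{H} + \tau^*(K_Y + H_Y + L).
$$

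The last ingredient is the relation $\tau^* H_Y = \widetilde{H} - E$, which follows by restricting $\Gamma = H_F - \tau_F^* H_Y$ to $\widetilde{X}$ and using $E = \Gamma|_{\widetilde{X}}$. Substituting the expression for $K_{\widetilde{X}}$ into $\sigma^* D_{\inn}$ and eliminating $\tau^* H_Y$ in favor of $\widetilde{H} - E$, one finds
$$
\sigma^* D_{\inn} = (d-n-e-\mu)\widetilde{H} + nE - \tau^*(K_Y + L).
$$
Carrying out the same substitution in $D_F|_{\widetilde{X}} = (d-e-\mu)\widetilde{H} - \tau^*(K_Y + nH_Y + L)$ produces exactly the same class, which establishes the lemma.

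The computation contains no genuine obstacle; it is entirely a matter of linear-equivalence bookkeeping. The only points demanding care are the sign and normalization conventions: one must use the correct form $K_{F/Y} = -2H_F + \tau_F^* H_Y$ of the relative canonical bundle for the rank-two bundle $\sO_Y \oplus \sO_Y(1)$, consistent with the normalization $\sO_F(\Gamma) = \sO_F(1) \otimes \tau_F^* \sO_Y(-1)$ fixed in Subsection~\ref{subsec:birdiv}, together with the discrepancy coefficient $n-1$ for the blow-up of a point. One should also note that the smoothness of $\widetilde{X}$ and $F$ is precisely what legitimizes the use of adjunction.
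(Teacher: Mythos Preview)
Your proof is correct and follows essentially the same approach as the paper: both compute $K_{\widetilde{X}}$ via adjunction on $F$ using $K_F = -2H_F + \tau_F^*(K_Y + H_Y)$, apply the blow-up discrepancy $\sigma^*K_X = K_{\widetilde{X}} - (n-1)E$, and use the relation $E = \widetilde{H} - \tau^*H_Y$. The only cosmetic difference is that the paper substitutes $E = \widetilde{H} - \tau^*H_Y$ to arrive directly at $(d-e-\mu)\widetilde{H} - \tau^*(K_Y + nH_Y + L)$, whereas you substitute $\tau^*H_Y = \widetilde{H} - E$ on both sides and compare in the basis $\{\widetilde{H}, E, \tau^*(K_Y+L)\}$.
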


\begin{proof}
As $K_F = -2H_F + \tau_F^* (K_Y + H_Y)$, we have
$$
K_{\widetilde{X}}=(K_F + \mu H_F + \tau_F^* L)|_{\widetilde{X}} = (\mu - 2) \widetilde{H} + \tau^*(K_Y + H_Y + L).
$$
Note that $\sigma^* K_X = K_{\widetilde{X}} - (n-1)E$. Then
$$
\sigma^* D_{\inn} = (d-n-e-1) \widetilde{H} - K_{\widetilde{X}} + (n-1)E = (d-e-\mu)\widetilde{H} - \tau^* (K_Y + nH_Y + L). \qedhere
$$  
\end{proof}

Since $H_F|_{\Gamma} = 0$, it follows that 
$$
D_F|_{\Gamma} = -K_Y - nH_Y - L~~\text{ and }~~(D_F - \widetilde{X})|_{\Gamma} = -K_Y - nH_Y - 2L.
$$
Note that $(-K_Y - nH_Y - L)|_L = -K_L - nH_Y|_L = 0$ as $L=E=\nP^{n-1}$.
For any integer $m \geq 0$, we have a commutative diagram with short exact sequences
$$
\xymatrixrowsep{0.35in}
\xymatrixcolsep{0.25in}
\xymatrix{
& 0 \ar[d] & 0 \ar[d] & 0 \ar[d] &\\
0 \ar[r] & \sO_F(D_F - \widetilde{X} - \Gamma + mH_F) \ar[r]^-{\cdot \widetilde{X}} \ar[d]^-{\cdot \Gamma} & \sO_F(D_F - \Gamma + mH_F) \ar[r] \ar[d]^-{\cdot \Gamma} & \sO_{\widetilde{X}}(\sigma^* D_{\inn} - E + mH) \ar[r] \ar[d]^-{\cdot E} & 0\\
0 \ar[r] & \sO_F(D_F - \widetilde{X}  + mH_F) \ar[r]^-{\cdot \widetilde{X}} \ar[d] & \sO_F(D_F + mH_F) \ar[r] \ar[d] & \sO_{\widetilde{X}}(\sigma^* D_{\inn}  + mH) \ar[r] \ar[d] & 0\\
0 \ar[r] & \sO_Y(-K_Y - nH_Y - 2L) \ar[r]^-{\cdot L} \ar[d] & \sO_Y(-K_Y - nH_Y - L) \ar[r] \ar[d] & \sO_{E} \ar[r] \ar[d] & 0\\
& 0 & 0 & ~0. &
}
$$
Notice that $x \not\in \Bs |D_{\inn} + mH|$ if and only if the restriction map 
$$
r_{\widetilde{X}}^m \colon H^0(\widetilde{X}, \sO_{\widetilde{X}}(\sigma^* D_{\inn} + m \widetilde{H})) \longrightarrow H^0(E, \sO_E)
$$
is surjective. We know that $r_{\widetilde{X}}^m$ is surjective whenever $m \gg 0$. 
We want to show that this happens when $m = 0$. To this end, we make two observations.

\begin{lemma}\label{lem:observationforbps}
$(1)$ $H^0(Y, \sO_Y(-K_Y - nH_Y - L))=0$.\\[3pt]
$(2)$ If $d-e-2\mu-1 \geq -1$, then we have a splitting short exact sequence
$$
0 \longrightarrow \tau_{F,*} \sO_F(D_F - \widetilde{X} - \Gamma + mH_F) \longrightarrow \tau_{F, *} \sO_F( D_F - \widetilde{X} + m H_F) \longrightarrow \sO_Y(-K_Y - nH_Y - 2L) \longrightarrow 0.
$$
\end{lemma}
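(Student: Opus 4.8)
For part $(1)$, the plan is to reduce the vanishing to a single degree computation against the ample class $H_Y^{n-1}$. Since any line bundle with a nonzero section is effective and an effective divisor meets $H_Y^{n-1}$ non-negatively, it suffices to verify that $(-K_Y - nH_Y - L)\cdot H_Y^{n-1} < 0$. I would expand
\[
(K_Y + nH_Y + L)\cdot H_Y^{n-1} = (K_Y + (n-1)H_Y)\cdot H_Y^{n-1} + H_Y^n + L\cdot H_Y^{n-1}.
\]
Here $(K_Y + (n-1)H_Y)\cdot H_Y^{n-1} = 2g - 2$ is computed by adjunction on a general smooth curve section, where $g \geq 0$ is the sectional genus of $Y$; moreover $H_Y^n = d_Y$ and $L\cdot H_Y^{n-1} = 1$ by the normalization $L.\sO_Y(1)^{n-1} = 1$ recorded in Subsection~\ref{subsec:birdiv}. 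As $Y \subseteq \nP^{r-1}$ is non-degenerate of codimension $e - 1 \geq 1$, we have $d_Y \geq e \geq 2$, so the sum is at least $2g + d_Y - 1 \geq d_Y - 1 \geq 1 > 0$. This gives $(-K_Y - nH_Y - L)\cdot H_Y^{n-1} < 0$ and hence $(1)$.

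For part $(2)$, the plan is to push forward the structure sequence of $\Gamma$ along $\tau_F$. First I would record the two classes
\[
D_F - \widetilde{X} + mH_F = (d-e-2\mu+m)H_F - \tau_F^*(K_Y + nH_Y + 2L),
\]
and $D_F - \widetilde{X} - \Gamma + mH_F = (d-e-2\mu+m-1)H_F - \tau_F^*(K_Y + (n-1)H_Y + 2L)$, using $\Gamma = H_F - \tau_F^* H_Y$ and $\widetilde{X} = \mu H_F + \tau_F^* L$. Twisting $0 \to \sO_F(-\Gamma) \to \sO_F \to \sO_\Gamma \to 0$ by $\sO_F(D_F - \widetilde{X} + mH_F)$ and using $H_F|_\Gamma = 0$ yields a short exact sequence on $F$ whose cokernel is $\sO_\Gamma(-K_Y - nH_Y - 2L)$. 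Applying $\tau_{F,*}$ and invoking the standard $\nP^1$-bundle vanishing $R^1\tau_{F,*}\sO_F(kH_F) = 0$ for $k \geq -1$ — which applies to the subsheaf because its $H_F$-coefficient is $d-e-2\mu+m-1 \geq -1$ by the hypothesis $d - e - 2\mu - 1 \geq -1$ together with $m \geq 0$ — produces the asserted short exact sequence of pushforwards, with $\tau_F|_\Gamma$ identifying the quotient with $\sO_Y(-K_Y - nH_Y - 2L)$.

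It remains to exhibit the splitting, for which I would use the decomposability of $\sE := \sO_Y \oplus \sO_Y(1)$. Writing $k := d - e - 2\mu + m \geq 0$, the projection formula gives $\tau_{F,*}\sO_F(kH_F) = \op{Sym}^k \sE = \bigoplus_{i=0}^{k}\sO_Y(i)$ canonically, so after the twist by $-K_Y - nH_Y - 2L$ the middle term is $\bigoplus_{i=0}^{k}\sO_Y(iH_Y - K_Y - nH_Y - 2L)$, while the subsheaf $\tau_{F,*}\sO_F(D_F-\widetilde{X}-\Gamma+mH_F) = \op{Sym}^{k-1}\sE \otimes \sO_Y(-K_Y-(n-1)H_Y-2L)$ is, after re-indexing, exactly the subsum over $1 \leq i \leq k$. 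The section $\Gamma = \nP(\sO_Y)$ corresponds to the quotient $\sE \twoheadrightarrow \sO_Y$ onto the first summand (consistently with $\sO_F(H_F)|_\Gamma = \sO_Y$), so the restriction-to-$\Gamma$ map is the projection $\op{Sym}^k\sE \to \sO_Y$ onto the $i=0$ summand, and the inclusion of that summand splits the sequence. The main obstacle is precisely this last identification: one must check that the geometric restriction-to-$\Gamma$ map agrees with the algebraic projection onto the degree-$0$ piece of $\op{Sym}^k\sE$, i.e. that $\Gamma$ is cut out by the correct quotient of $\sE$. The remaining degree bookkeeping, in particular confirming that the $R^1$-vanishing threshold coincides with the stated hypothesis, is routine.
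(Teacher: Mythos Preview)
Your argument is correct. For part~$(2)$ it is essentially the paper's own proof: both compute the two classes, push forward along~$\tau_F$, use $R^1\tau_{F,*}\sO_F((d-e-2\mu-1+m)H_F)=0$, and read off the splitting from the explicit direct-sum decomposition of $\tau_{F,*}\sO_F(kH_F)$ on the split $\nP^1$-bundle.

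For part~$(1)$ your route differs slightly from the paper's. The paper simply cites Ein's result that $K_Y+nH_Y$ is base point free (hence nef), from which $(K_Y+nH_Y+L)\cdot H_Y^{n-1}\geq L\cdot H_Y^{n-1}=1>0$ follows immediately. You instead compute $(K_Y+nH_Y+L)\cdot H_Y^{n-1}=2g-2+d_Y+1$ directly via adjunction on a curve section, and conclude positivity from $g\geq 0$ and $d_Y\geq e\geq 2$. Your approach is more elementary and self-contained (it avoids importing Ein's theorem), while the paper's is shorter given that Ein's result is already being used elsewhere in the section. Both reach the same inequality against $H_Y^{n-1}$.
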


\begin{proof}
As $K_Y + nH_Y$ is base point free by \cite[Lemma 2]{E}, the assertion (1) holds. 
To show the assertion (2), we apply $\tau_{F,*}$ to the left-most vertical short exact sequence in the above commutative diagram. We write
$$
\begin{array}{l}
D_F - \widetilde{X} - \Gamma + mH_F = (d-e-2\mu -1+m) H_F - \tau_F^* (K_Y + (n-1)H_Y+2L)\\
D_F - \widetilde{X} + mH_F = (d-e-2\mu+m)H_F - \tau_F^* (K_Y + nH_Y +2L).
\end{array}
$$
When $d-e-2\mu-1 \geq -1$, we have\\[-25pt]

\begin{equation}\label{eq:tauDF-X+mH}
\begin{footnotesize}
\begin{array}{l}
\tau_{F,*} \sO_F(D_F - \widetilde{X} - \Gamma + mH_F) = (\sO_Y(d-e-2\mu+m) \oplus \cdots \oplus \sO_Y(1)) \otimes \sO_Y(-(K_Y + nH_Y + 2L))\\
\tau_{F,*} \sO_F(D_F - \widetilde{X} + mH_F) = (\sO_Y(d-e-2\mu+m) \oplus \cdots \oplus \sO_Y(1) \oplus \sO_Y) \otimes \sO_Y(-(K_Y + nH_Y + 2L))
\end{array}
\end{footnotesize}
\end{equation}

\noindent and $R^1 \tau_{F,*} \sO_F(D_F - \widetilde{X} - \Gamma + mH_F) = 0$. 
\end{proof}

\begin{lemma}
If $d-e-2\mu-1 < -1$, then $n=e=2$ and $Y=Q^2 \subseteq \nP^3$ is a quadric surface.
\end{lemma}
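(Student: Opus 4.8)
The plan is to convert the numerical hypothesis into a bound on $d_Y:=\deg Y$ and $\mu=\deg\pi_{x,X}$, and then to use two pieces of classical projective geometry---the minimal degree bound and the bound on dimensions of linear subspaces of quadrics---to pin down $Y$ completely. First I would rewrite $d-e-2\mu-1<-1$ as $d-e-2\mu<0$ and substitute the relation $d=\mu d_Y+1$ recorded in Subsection~\ref{subsec:birdiv}. This turns the hypothesis into
\[
\mu(d_Y-2)<e-1 .
\]

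The crucial lower bound on $d_Y$ comes from the fact that $Y\subseteq\nP^{r-1}$ is non-degenerate of dimension $n$, hence of codimension $(r-1)-n=e-1$. By the minimal degree bound for non-degenerate irreducible varieties, $d_Y\geq(e-1)+1=e$, so in particular $d_Y\geq e\geq 2$ and $e-1\leq d_Y-1$. Feeding these into the displayed inequality together with $\mu\geq 2$ and $d_Y-2\geq 0$ gives
\[
2(d_Y-2)\leq\mu(d_Y-2)<d_Y-1 ,
\]
which forces $d_Y<3$, hence $d_Y=2$; then $e\leq d_Y=2$ combined with $e\geq 2$ yields $e=2$. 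At this point $Y$ is a non-degenerate variety of codimension $e-1=1$ in $\nP^{r-1}=\nP^{n+1}$ of degree $2$, so $Y=Q^n\subseteq\nP^{n+1}$ is a quadric hypersurface, smooth by hypothesis.

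It remains to force $n=2$, and here I would invoke the geometry of the divisor $L$. By Subsection~\ref{subsec:birdiv} we have $L=E\cong\nP^{n-1}$ and $L.H_Y^{\,n-1}=1$, so $L\subseteq Y$ is a linear subspace of dimension $n-1$. Since the maximal linear subspaces of a smooth quadric $Q^n$ have dimension $\lfloor n/2\rfloor$, the containment $\nP^{n-1}=L\subseteq Q^n$ requires $n-1\leq\lfloor n/2\rfloor$. For $n\geq 3$ one has $\lfloor n/2\rfloor\leq n/2<n-1$, a contradiction; hence $n=2$, and $Y=Q^2\subseteq\nP^3$ as claimed.

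The argument is a short numerical deduction followed by one structural remark, so I do not expect any serious analytic obstacle. The only step requiring the right idea rather than routine bookkeeping is the last one: recognizing that the presence of the linear subspace $L\cong\nP^{n-1}$ inside $Y$ is precisely the obstruction ruling out $n\geq 3$, via the classical bound $\lfloor n/2\rfloor$ on the dimension of linear subspaces contained in a smooth quadric.
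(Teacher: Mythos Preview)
Your proof is correct and follows essentially the same route as the paper: both combine the identity $d=\mu d_Y+1$ with the minimal-degree bound $d_Y\geq e$ to force $e=2$ and $d_Y=2$, hence $Y=Q^n\subseteq\nP^{n+1}$. The only difference lies in the final step ruling out $n\geq 3$: the paper argues via $\Pic(Q^n)=\mathbf{Z}\cdot[H_Y]$ for $n\geq 3$ (so no linear $\nP^{n-1}$ can occur as a divisor class), whereas you invoke the bound $\lfloor n/2\rfloor$ on the dimension of linear subspaces of a smooth quadric. Both are standard facts and yield the same conclusion; your version is perhaps slightly more elementary, while the paper's Picard-group argument fits naturally with the divisor-theoretic framework used elsewhere in Section~\ref{sec:bpf}.
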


\begin{proof}
Notice that
$$
-1 > d-e-2\mu-1 = \mu d_Y - e - 2\mu \geq \mu e - e - 2\mu = (\mu-1)(e-2) -2.
$$
Thus $e=2$, and $Y=Q^n \subseteq \nP^{n+1}$. If $n \geq 3$, then $\Pic(Y)$ is generated by $\sO_Y(1)$ so that $Y$ cannot contains a linear subspace $L$ of $\nP^{n+1}$ as a divisor. Hence $n=2$. 
\end{proof}

The case of $d-e-2\mu-1 < -1$ considered in the above lemma corresponds to the exceptional case in Theorem \ref{thm:bpf}. In this case, $D_{\inn}$ is not base point free (see Example \ref{ex:Nomaexcsurf}). In the remaining, we only need to focus on the case of $d - e - 2\mu -1 \geq -1$. We also assume that $Y \subseteq \nP^{r-1}$ is not a quadric hypersurface. By Lemma \ref{lem:observationforbps}, we obtain a commutative diagram
$$
\xymatrix{
H^0(\widetilde{X}, \sO_{\widetilde{X}}(\sigma^* D_{\inn} + m\widetilde{H})) \ar[r]^-{\delta_{\widetilde{X}}^m} \ar[d]_-{r_{\widetilde{X}}^m} & H^1(F, \sO_F(D_F - \widetilde{X} + mH_F)) \ar[r]^-{\eta^m} \ar@{->>}[d]_-{r_F^m} & H^1(F, \sO_F(D_F + mH_F)) \\
H^0(E, \sO_E) \ar@{^{(}->}[r]_-{\delta_E} & H^1(Y, \sO_Y(-K_Y - nH_Y - 2L)), &
}
$$
where $\delta_E$ is injective and $r_F^m$ is surjective.
Note that $r_{\widetilde{X}}^m$ is surjective if and only if there is $s_m \in H^1(F, \sO_F(D_F - \widetilde{X} + mH_F))$ such that $r_F^m(s_m) = \delta_E(1)$ and $\eta^m(s_m) = 0$. We know that this holds whenever $m \gg 0$. Recall that $D_F - \widetilde{X} = (d-e-\mu)H_F - \tau_F^* (K_Y + nH_Y + L)$. Suppose that
\begin{equation}\label{eq:H^1((d-e-2mu+m)H_Y - (K_Y + nH_Y + L))}
H^1(Y, \sO_Y((d-e-2\mu+m)H_Y - (K_Y + nH_Y + L)))=0~~\text{ for $m \geq 1$}.
\end{equation}
If this holds, then (\ref{eq:tauDF-X+mH}) yields
$$
H^1(F, \sO_F(D_F - \widetilde{X})) = H^1(F, \sO_F(D_F-\widetilde{X}+mH_F)), ~r_F^0=r_F^m,~\eta^0 = \eta^m~~\text{ for any $m \geq 0$}.
$$
Taking $s_0:=s_m \in H^1(F, \sO_F(D_F - \widetilde{X}))$ for $m \gg 0$, we obtain $r_F^0(s_0)=\delta_E(1)$ and $\eta^0(s_0)=0$. This implies that $r_{\widetilde{X}}^0$ is surjective as desired. 

\subsection{Proof of (\ref{eq:H^1((d-e-2mu+m)H_Y - (K_Y + nH_Y + L))})}
We verify (\ref{eq:H^1((d-e-2mu+m)H_Y - (K_Y + nH_Y + L))}) except for one case. In the exceptional case, we instead show
$$
H^1(\widetilde{X}, \sO_{\widetilde{X}}(\sigma^*D_{\inn} - E))=0,
$$
which directly implies that $r_{\widetilde{X}}^0$ is surjective. First, we prove the following elemenatry lemma.

\begin{lemma}\label{lem:H-Lnef}
Let $Y \subseteq \nP^{r-1}$ be a non-degenerate smooth projective variety of dimension $n$, and $L$ be an effective divisor on $Y$ such that $L \subseteq \nP^{r-1}$ is a linear subspace of dimension $n-1$. Then $\sO_L(L|_L) = \sO_{\nP^{n-1}}(\ell)$ for some integer $\ell \leq 0$, and $H_Y - L$ is nef, where $H_Y$ is a hyperplane section of $Y \subseteq \nP^{r-1}$.
\end{lemma}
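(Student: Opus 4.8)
The plan is to record two immediate observations and then prove the two assertions in the order stated, since the bound $\ell\le 0$ will feed into the nefness of $H_Y-L$. Because $L$ is a linearly embedded $\nP^{n-1}$, its Picard group is $\nZ$, so $\sO_L(L|_L)=N_{L/Y}$ is automatically of the form $\sO_{\nP^{n-1}}(\ell)$ for a unique integer $\ell$; moreover $H_Y|_L=\sO_{\nP^{r-1}}(1)|_L=\sO_L(1)$ since $L$ is linearly embedded. Consequently $(H_Y-L)|_L=\sO_L(1-\ell)$, and the lemma reduces to (a) $\ell\le 0$ and (b) $H_Y-L$ is nef.

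For (a), I would reduce to the surface case and invoke the Hodge index theorem. Fix a line $M\subseteq L$ and, following Lemma~\ref{lem: smoothness of hyperplane sections}, take general hyperplanes $H_1,\dots,H_{n-2}\supseteq M$ so that $S:=Y\cap H_1\cap\cdots\cap H_{n-2}$ is a smooth surface; for a general such choice $S\cap L=M$, since the $H_i$ restrict to general hyperplanes of $L=\nP^{n-1}$ through $M$. Then I would compute $\deg N_{M/Y}$ in two ways, using the normal bundle sequences of $M\subseteq L\subseteq Y$ and of $M\subseteq S\subseteq Y$. Here $N_{M/L}=\sO_M(1)^{\oplus(n-2)}$ and $N_{S/Y}|_M=\sO_M(1)^{\oplus(n-2)}$ both have degree $n-2$, while $N_{L/Y}|_M=\sO_M(\ell)$ and $N_{M/S}=\sO_M(M^2)$; comparing gives $\deg N_{M/Y}=(n-2)+\ell=(n-2)+M^2$, hence $\ell=M^2$. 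Finally, the Hodge index theorem on $S$ yields $M^2\cdot(H_Y|_S)^2\le (H_Y\cdot M)^2=1$, and since $\deg S=\deg Y\ge 2$ (a non-degenerate variety of degree $1$ is a linear space, excluded by $Y\neq\nP^{r-1}$), I obtain $M^2\le 0$, i.e.\ $\ell\le 0$.

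For (b), I would exploit the hyperplanes through $L$. Any hyperplane $H\supseteq L$ gives $D:=H\cap Y\supseteq L$, and since $L$ is a prime divisor the residual $R:=D-L\ge 0$ is an effective divisor in $|H_Y-L|$. Because $\bigcap_{H\supseteq L}H=L$, the base locus of the family $\{R\}$ is contained in $L$; thus for any irreducible curve $C\not\subseteq L$ a general $R$ avoids $C$, so $(H_Y-L)\cdot C=R\cdot C\ge 0$. For $C\subseteq L$ one has $(H_Y-L)\cdot C=(1-\ell)\deg_L C\ge 0$ by part (a), which is exactly where the bound $\ell\le 0$ is used. This establishes nefness. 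The main obstacle is part (a): the crux is the normal bundle bookkeeping together with arranging a smooth surface section meeting $L$ in precisely the chosen line $M$, after which the Hodge index inequality closes the argument; part (b) is then routine.
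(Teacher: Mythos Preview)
Your proof is correct; part (b) matches the paper's argument essentially verbatim, but your treatment of part (a) is genuinely different. The paper argues as follows: since $\lvert K_Y+nH_Y\rvert$ is base point free by Ein's lemma \cite[Lemma~2]{E}, the restriction $(K_Y+nH_Y)|_L$ is base point free on $L\cong\nP^{n-1}$, so $\sO_L(K_Y|_L)=\sO_{\nP^{n-1}}(k)$ with $k\ge -n$; adjunction gives $\sO_L((K_Y+L)|_L)=\sO_{\nP^{n-1}}(-n)$, whence $\ell=-n-k\le 0$. Your route instead cuts down to a smooth surface section $S$ through a line $M\subseteq L$, identifies $\ell$ with the self-intersection $M^2$ on $S$ via two normal-bundle sequences, and then bounds $M^2$ by the Hodge index theorem together with $\deg Y\ge 2$. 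The paper's argument is shorter and leans on a known positivity result for adjoint divisors; yours is more self-contained, avoiding any input beyond classical surface theory and the smoothness lemma already proved in the paper. Both yield the same conclusion, and your normal-bundle bookkeeping and the claim $S\cap L=M$ for general hyperplanes through $M$ are sound.
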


\begin{proof}
Let $C \subseteq Y$ be an irreducible curve. Suppose that $C \not\subseteq L$. We can choose $H_Y \in |\sO_Y(1)|$ in such a way that $L \subseteq H_Y$ but $C \not\subseteq H_Y$. Then $(H_Y-L).C \geq 0$. Suppose that $C \subseteq L$. As $(H_Y-L).C = (H_Y-L)|_L .C$, it suffices to show that $\sO_L(L|_L)= \sO_{\nP^{n-1}}(\ell)$ for some $\ell \leq 0$, where $n:=\dim Y$. Recall from \cite[Lemma 2]{E} that $\lvert K_Y + nH_Y \rvert$ is base point free. Thus $\lvert (K_Y+nH_Y)|_L \rvert$ is also base point free, so we may write $\sO_L(K_Y|_L) = \sO_{\nP^{n-1}}(k)$ for some integer $k \geq -n$. Since $\sO_L((K_Y+L)|_L) = \sO_L(K_L) = \sO_{\nP^{n-1}}(-n)$, it follows that $\sO_{\nP^{n-1}}(\ell) = \sO_L(L|_L) = \sO_L(-n-k)$. Thus $\ell \leq 0$.
\end{proof}

For an integer $m \geq 1$, we write
$$
(d-e-2\mu+m)H_Y - (K_Y + nH_Y + L) = K_Y + (-2K_Y + (d-e-n-2\mu) H_Y) + (mH_Y - L).
$$
By Lemma \ref{lem:H-Lnef}, $mH_Y - L$ is nef. If $-2K_Y + (d-e-n-2\mu) H_Y$ is ample, then Kodaira vanishing implies (\ref{eq:H^1((d-e-2mu+m)H_Y - (K_Y + nH_Y + L))}).

\begin{lemma}
If $(\mu-2)(d_Y-2)+n \geq e$, then (\ref{eq:H^1((d-e-2mu+m)H_Y - (K_Y + nH_Y + L))}) holds.
\end{lemma}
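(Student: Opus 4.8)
The plan is to use the reduction already set up just above the statement: it suffices to prove that
\[
A := -2K_Y + (d-e-n-2\mu)H_Y
\]
is ample. Indeed, for every $m \geq 1$ the divisor $mH_Y - L$ is nef by Lemma~\ref{lem:H-Lnef}, so $A + (mH_Y - L)$ is ample, and applying Kodaira vanishing to $K_Y + \bigl(A + (mH_Y - L)\bigr)$ yields (\ref{eq:H^1((d-e-2mu+m)H_Y - (K_Y + nH_Y + L))}). Thus the whole problem collapses to a single ampleness assertion, and the rest of the argument is numerical.

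First I would translate the numerical hypothesis into a bound on the coefficient of $H_Y$ in $A$. Substituting $d = \mu d_Y + 1$ gives $d - e - n - 2\mu = \mu(d_Y - 2) + 1 - e - n$, and the assumption $(\mu-2)(d_Y-2) + n \geq e$ then forces $d - e - n - 2\mu \geq 2d_Y - 2n - 3$. This inequality is the only place the hypothesis is used, and one should note that it is exactly sharp for what follows.

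The decisive step is to recognize the outer double point divisor of $Y \subseteq \nP^{r-1}$, namely $D_{\out}(Y) = -K_Y + (d_Y - n - 2)H_Y$, hidden inside $A$. A direct rewriting gives
\[
A = 2\,D_{\out}(Y) + \bigl( (d-e-n-2\mu) - 2d_Y + 2n + 4 \bigr)H_Y,
\]
and the bound above shows that the $H_Y$-coefficient is at least $1$. Since $D_{\out}(Y)$ is nef — it is base point free by Mumford (\cite[Technical Appendix 4]{BM}) when $Y$ is not a hypersurface, and it is the zero class by adjunction when $Y \subseteq \nP^{n+1}$ is a hypersurface — the divisor $A$ is the sum of a nef divisor and a strictly positive multiple of the ample $H_Y$, hence ample. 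Feeding this into the reduction of the first paragraph completes the proof.

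The point requiring the most care is ensuring that $D_{\out}(Y)$ is nef for \emph{every} $Y$ permitted in the present setting, including the boundary case $e = 2$, where $Y$ is a (non-quadric) hypersurface and Mumford's base point freeness does not literally apply; there one instead observes that adjunction forces $D_{\out}(Y) = 0$, which is still nef. The mild subtlety is therefore bookkeeping the hypersurface case separately rather than any hard positivity input: once $D_{\out}(Y)$ is known to be nef, the ampleness of $A$ needs no estimate on $K_Y$ beyond Mumford's theorem, and the numerical hypothesis is precisely calibrated to keep the residual $H_Y$-coefficient positive.
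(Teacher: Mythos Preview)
Your proof is correct and follows essentially the same route as the paper: reduce to ampleness of $-2K_Y + (d-e-n-2\mu)H_Y$, rewrite this as $2D_{\out}(Y)$ plus a positive multiple of $H_Y$ using the numerical hypothesis, and conclude by nefness of $D_{\out}(Y)$. Your explicit handling of the hypersurface case $e=2$ (where $D_{\out}(Y)=0$ by adjunction) is a small clarification the paper omits, but the argument is otherwise identical.
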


\begin{proof}
The given condition is equivalent to
$$
d-e-n-2\mu = \mu d_Y - (e-1) - n - 2 \mu \geq 2d_Y - 2n - 3.
$$
As $-K_Y +(d_Y - n-2)H_Y$ is the double point divisor from outer projection for $Y \subseteq \nP^{r-1}$, it is always nef. Thus
$$
-2K_Y + (d-e-n-2\mu) H_Y = \underbrace{-2(K_Y + (d_Y - n-2)H_Y)}_{\text{nef}} + \underbrace{( (d-e-n-2\mu) - (2d_Y - 2n-4))}_{\geq 1} H_Y
$$
is ample.
\end{proof}

We henceforth assume that $(\mu-2)(d_Y-2)+n < e$. We must have $\mu=2$ since $d_Y \geq e$ and $n \geq 2$. Then we get $n<e$. 

\begin{lemma}
If $Y$ is not a Noma's exceptional variety, then (\ref{eq:H^1((d-e-2mu+m)H_Y - (K_Y + nH_Y + L))}) holds.
\end{lemma}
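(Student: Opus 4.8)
The plan is to deduce the required vanishing from Kodaira vanishing by recognizing the relevant line bundle as $K_Y$ plus an ample divisor. Write $e_Y := e-1$ for the codimension of $Y \subseteq \nP^{r-1}$, and recall that in the case at hand we have $\mu = 2$, so $d = 2d_Y + 1$, together with $2 \leq n \leq e_Y$ (in particular $e_Y \geq 2$), and that $Y$ is smooth and non-degenerate in $\nP^{r-1}$. Setting
\[
    M_m := (d-e-2\mu+m)H_Y - (K_Y + nH_Y + L) = -K_Y + (d-e-n-4+m)H_Y - L,
\]
the goal is to show $H^1(Y, \sO_Y(M_m)) = 0$ for all $m \geq 1$. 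Since $M_m = K_Y + A_m$ with
\[
    A_m := -2K_Y + (d-e-n-4+m)H_Y - L,
\]
it suffices to prove that $A_m$ is ample; Kodaira vanishing then gives $H^1(Y, K_Y + A_m) = 0$.

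First I would establish the ampleness of $A_m$ by splitting it into nef pieces and one ample piece. Using $d = 2d_Y + 1$ and $e_Y = e - 1$, a direct comparison of coefficients shows
\[
    A_m = 2\bigl(-K_Y + (d_Y - n - e_Y - 1)H_Y\bigr) + (e_Y + n - 2)H_Y + (mH_Y - L).
\]
The divisor $-K_Y + (d_Y - n - e_Y - 1)H_Y$ is exactly the double point divisor from inner projection of $Y \subseteq \nP^{r-1}$; since $e_Y \geq 2$ and $Y$ is not a Noma's exceptional variety, it is semiample, hence nef, by Noma's theorem (\cite[Theorem 4]{N1}). The middle term $(e_Y + n - 2)H_Y$ is ample because $e_Y + n - 2 \geq 2$ and $H_Y$ is very ample. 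Finally $mH_Y - L = (m-1)H_Y + (H_Y - L)$ is nef for every $m \geq 1$ by Lemma \ref{lem:H-Lnef}. As a sum of nef divisors and an ample divisor, $A_m$ is ample.

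With $A_m$ ample, Kodaira vanishing on the smooth projective variety $Y$ completes the proof. The only structural input is the nefness of the inner double point divisor of $Y$, which is precisely where the hypothesis that $Y$ is not a Noma's exceptional variety enters; the remainder is numerical bookkeeping together with the elementary nefness of $mH_Y - L$ from Lemma \ref{lem:H-Lnef}. I expect no genuine obstacle in this lemma: the real difficulty is deferred to the complementary situation in which $Y$ \emph{is} a Noma's exceptional variety, where the inner double point divisor of $Y$ need not be semiample and the clean decomposition above breaks down, so that a careful case-by-case analysis becomes necessary.
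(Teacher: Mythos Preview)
Your proof is correct and follows essentially the same route as the paper: you write the line bundle as $K_Y$ plus $A_m$, decompose $A_m$ as twice the inner double point divisor $D_{\inn,Y} = -K_Y + (d_Y - n - e_Y - 1)H_Y$ of $Y$ (nef since $Y$ is not Noma's exceptional) plus $(e_Y+n-2)H_Y$ (ample, coefficient $= n+e-3 \geq 1$) plus $mH_Y - L$ (nef by Lemma~\ref{lem:H-Lnef}), and conclude by Kodaira vanishing. The paper had already isolated the decomposition $K_Y + \bigl(-2K_Y + (d-e-n-2\mu)H_Y\bigr) + (mH_Y - L)$ before the lemma, so its proof only verifies ampleness of the middle term via the same splitting; your version folds the $(mH_Y - L)$ piece back into $A_m$, but the content is identical.
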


\begin{proof}
As $-K_Y + (d_Y - n-e)H_Y$  is the double point divisor from inner projection for $Y \subseteq \nP^{r-1}$, it is always nef. Note that
$$
d-e-n-2\mu = 2d_Y -n-e-3 \geq 2d_Y - 2n - 2e+1.
$$
Then
$$
-2K_Y + (d-e-n-2\mu) H_Y  = \underbrace{-2(K_Y + (d_Y - n-e)H_Y)}_{\text{nef}} + \underbrace{((d-e-n-2\mu)-(2d_Y -2n-2e))}_{\geq 1}H_Y
$$
is ample. 
\end{proof}

It only remains to consider the case that $\mu = 2$ and $Y \subseteq \nP^{r-1}$ is a Noma's exceptional variety. When $Y \subseteq \nP^{r-1}$ is the second Veronese surface, (\ref{eq:H^1((d-e-2mu+m)H_Y - (K_Y + nH_Y + L))}) is trivial. In the next two lemmas, we consider the cases when $Y \subseteq \nP^{r-1}$ is a scroll over a smooth projective curve or $Y \subseteq \nP^{r-1}$ is a Roth variety.

\begin{lemma}
If $\mu=2$ and $Y \subseteq \nP^r$ is a scroll over a smooth projective curve $C$ of genus $g \geq 0$, then (\ref{eq:H^1((d-e-2mu+m)H_Y - (K_Y + nH_Y + L))}) holds.
\end{lemma}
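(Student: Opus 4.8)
The plan is to push the whole computation down to the base curve via the ruling of the scroll. Write $Y = \nP(\mathcal{E})$ with structure map $p\colon Y \to C$, where $\mathcal{E}$ is a rank-$n$ very ample bundle with $\deg\mathcal{E} = d_Y$ and $\sO_Y(1) = H_Y$ the tautological bundle, so that $K_Y = -nH_Y + p^*(K_C + \det\mathcal{E})$, i.e. $K_Y + nH_Y = p^*(K_C + \det\mathcal{E})$. Since $L\subseteq\nP^{r-1}$ is a linear $\nP^{n-1}$ of $H_Y$-degree one with trivial self-intersection ($\ell = 0$ in Lemma~\ref{lem:H-Lnef}), it is a fibre of the scroll, so $L = p^*P_0$ for a point $P_0\in C$. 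Using $\mu = 2$, hence $d = 2d_Y+1$, the line bundle in \eqref{eq:H^1((d-e-2mu+m)H_Y - (K_Y + nH_Y + L))} takes the form $M := aH_Y - p^*B$ with $a = 2d_Y - e - 3 + m \ge 2d_Y - e - 2 > 0$ and $B := K_C + \det\mathcal{E} + P_0$, so $\deg B = 2g-1+d_Y$.

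Because $a>0$, one has $R^{>0}p_*\sO_Y(aH_Y) = 0$ and $p_*\sO_Y(aH_Y) = S^a\mathcal{E}$, whence the projection formula and the Leray spectral sequence give $H^1(Y, M) \cong H^1(C,\, S^a\mathcal{E}\otimes\sO_C(-B))$. Here I would invoke the Harder--Narasimhan formalism over $\nC$: for a vector bundle $W$ on a smooth curve one has $H^1(C,W) = 0$ as soon as $\mu_{\min}(W) > 2g-2$, and $\mu_{\min}(S^a\mathcal{E}) = a\,\mu_{\min}(\mathcal{E})$ since in characteristic $0$ symmetric powers of semistable bundles are semistable. Thus $\mu_{\min}\bigl(S^a\mathcal{E}\otimes\sO_C(-B)\bigr) = a\,\mu_{\min}(\mathcal{E}) - \deg B$, and it suffices to establish the single numerical inequality $a\,\mu_{\min}(\mathcal{E}) > 4g - 3 + d_Y$.

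To close this I would bound both sides. The general curve section $C_Y := Y\cap\nP^e$ maps isomorphically onto $C$ under $p$, so it is a non-degenerate smooth curve of genus $g$ and degree $d_Y$ in $\nP^e$; Castelnuovo's bound (Subsection~\ref{subsec:Castgenusbd}, with codimension $e-1$) then bounds $g$ from above. For the left-hand side, very ampleness forces $\mu_{\min}(\mathcal{E}) > 0$, and when $\mathcal{E}$ is semistable $\mu_{\min}(\mathcal{E}) = d_Y/n$; in the non-special case Riemann--Roch together with $h^0(\mathcal{E}) \ge n+e$ even yields the clean estimate $g \le (d_Y-e)/n$, and a direct computation then shows a strictly positive margin for all $d_Y \ge e \ge n+1$. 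The main obstacle is the delicate regime where these crude inputs are not enough: controlling $\mu_{\min}(\mathcal{E})$ when $\mathcal{E}$ is \emph{unstable}, which I would handle by passing to the minimal destabilising quotient $Q$ of $\mathcal{E}$ and the associated subscroll $\nP(Q)\subseteq Y$ (still of sectional genus $g$, hence again subject to Castelnuovo applied to \emph{its} curve section); and the special bundles, where $g$ is near-extremal. In that last regime the inequality is genuinely tight near $n = e-1$, so one must use the exact floor term $\varepsilon$ in Castelnuovo's bound (and the fact that varieties of minimal degree are rational, forcing $g=0$ there) rather than any lossy estimate.
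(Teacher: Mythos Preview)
Your reduction to $H^1(C,\,S^a\mathcal{E}\otimes\sO_C(-B))=0$ via Leray and the criterion $\mu_{\min}>2g-2$ is the same as the paper's. However, there are two genuine gaps.

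\medskip
\textbf{First}, your claim that $L$ is always a fibre is wrong. Lemma~\ref{lem:H-Lnef} only yields $\ell\le 0$, not $\ell=0$, so the justification you cite does not hold. In fact the fibre assertion fails precisely when $n=2$ and $g=0$: if $\mathcal{E}=\sO_{\nP^1}(1)\oplus\sO_{\nP^1}(a)$ with $a\ge 2$, then the negative section $L=H_Y-aF_Y$ is a line in $\nP^{r-1}$ lying on $Y$ with $L^2=1-a<0$. The paper treats this case separately: the divisor in question is then $(2d_Y-e-4+m)H_Y+F_Y$, which is nef, and Kawamata--Viehweg vanishing applies since $Y$ is of Fano type. (For $n\ge 3$, or $n=2$ with $g\ge 1$, the map $\nP^{n-1}\to C$ is indeed constant, so $L$ is a fibre there; but you should say this rather than invoking $\ell=0$.)

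\medskip
\textbf{Second}, in the fibre case your plan to bound $\mu_{\min}(\mathcal{E})$ via Castelnuovo on curve sections and destabilising quotients is only a sketch, and as you yourself note the inequality becomes tight. The paper avoids this entirely by a direct route. The outer double point divisor $-K_Y+(d_Y-n-2)H_Y=(d_Y-2)H_Y-p^*(K_C+\det\mathcal{E})$ of $Y$ is ample; Miyaoka's criterion then gives $(d_Y-2)\mu^-(\mathcal{E})>d_Y+2g-2$. For $g\le 1$ this already suffices (indeed $a\,\mu^-(\mathcal{E})\ge 0$ and one only needs $d_Y+3>4g$). For $g\ge 2$ the paper invokes an inequality established in \cite[Proof of Lemma~3.5]{KP2}, namely $(d_Y-e-2)\mu^-(\mathcal{E})>2g-2$; summing the two gives $(2d_Y-e-4)\mu^-(\mathcal{E})>d_Y+4g-4$, and since $\mu^-(\mathcal{E})>(d_Y+2g-2)/(d_Y-2)>1$ one obtains $(2d_Y-e-3+m)\mu^-(\mathcal{E})>d_Y+4g-3$ for $m\ge 1$, which is exactly the bound you need. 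No Castelnuovo analysis or Harder--Narasimhan case splitting is required.
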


\begin{proof}
There is a very ample vector bundle $\sE$ on $C$ such that $Y=\nP(\sE)$ and $\sO_Y(1)=\sO_{\nP(\sE)}(1)$. Let $\pi_Y \colon Y \to C$ be the canonical projection. Suppose that $L$ is not a fiber of $\pi_Y$. Then $C=\nP^1$ and $\sE = \sO_{\nP^1}(1) \oplus \sO_{\nP^1}(a)$ for some integer $a \geq 2$. Moreover, $L=H_Y - aF_Y$, where $F_Y$ is a fiber of $\pi_Y$. 
One can easily check that
$$
(d-e-2\mu+m)H_Y - (K_Y + nH_Y + L) = (2d_Y - e-4+m)H_Y + F_Y
$$
is nef for $m \geq 1$. As $Y$ is of Fano type, Kawamata--Viehweg vanishing yields (\ref{eq:H^1((d-e-2mu+m)H_Y - (K_Y + nH_Y + L))}). Now, suppose that $L=\pi_Y^{-1}(z)$ is a fiber of $\pi_Y$ over $z \in C$. We write
$$
(d-e-2\mu+m)H_Y - (K_Y + nH_Y + L) = (2d_Y- e-3+m)H_Y  - \pi_Y^*(K_C + \det \sE+z).
$$
Here $\deg (K_C + \det \sE+z) = 2g-1 + d_Y$. By \cite[Lemmas 1.12 and 2.5]{Butler}, (\ref{eq:H^1((d-e-2mu+m)H_Y - (K_Y + nH_Y + L))}) holds if
$$
\mu^-(S^{2d_Y - e-3+m} \sE \otimes \sO_C(-K_C - \det \sE - z)) = (2d_Y - e-3+m)\mu^-(\sE) - 2g+1-d_Y > 2g-2
$$
for $m \geq 1$. We know that
$$
-K_Y + (d_Y-n-2)H_Y = (d_Y - 2)H_Y + \pi_Y^*(-K_C - \det \sE)
$$
is ample. By Miyaoka's criterion (see e.g. \cite[Lemma 5.4]{Butler}), this implies that
$$
(d_Y - 2)\mu^-(\sE) > d_Y + 2g-2.
$$
Suppose that $g=0$ or $1$. Since $(2d_Y - e-3+m)\mu^-(\sE) \geq 0$, we only need to check that $d_Y+3 > 4g$ but it is clear. Suppose that $g \geq 2$. It was shown in  \cite[Proof of Lemma 3.5]{KP2} that
$$
(d_Y - e-2)\mu^-(\sE) > 2g-2.
$$
Thus we have
$$
(2d_Y - e- 4)\mu^-(\sE)> d_Y + 4g-4.
$$
As $\mu^-(\sE) > (d_Y + 2g-2)/(d_Y-2) > 1$, we obtain
$$
(2d_Y - e-3+m)\mu^-(\sE) > (2d_Y - e- 4)\mu^-(\sE)+1 > d_Y + 4g-3
$$
for $m \geq 1$. Hence (\ref{eq:H^1((d-e-2mu+m)H_Y - (K_Y + nH_Y + L))}) holds.    
\end{proof}

\begin{lemma}
If $Y \subseteq \nP^r$ is a Roth variety but not a rational scroll, then  $n=2$ and $Y \in \lvert bH_S + \pi_S^* \sO_{\nP^1}(1) \rvert$ for some $b \geq 2$, where $S:=\nP (\sO_{\nP^1}^{\oplus 2} \oplus \sO_{\nP^1}(a))$ for some integer $a \geq 2$ with the tautological divisor $H_S$ and $\pi_S \colon S \to \nP^1$ is the canonical projection. In this case, $d_Y = ba+1$, and $\ell:=\sC(Y)$ is a line in $\nP^{r-1}$ with $\ell = H_Y - aF_Y$, where $F_Y$ is a fiber of the fibration $\pi_Y:=\pi_S|_Y \colon Y \to \nP^1$.
\begin{enumerate}
    \item If $b=2$, then (\ref{eq:H^1((d-e-2mu+m)H_Y - (K_Y + nH_Y + L))}) holds. 
    \item If $b \geq 3$, then 
    \begin{equation}\label{eq:H^1(widetildeX)}
    H^1(\widetilde{X}, \sigma^*D_{\inn}-E)=0,
    \end{equation}
    which directly implies that $r_{\widetilde{X}}^0$ is surjective. 
\end{enumerate}
\end{lemma}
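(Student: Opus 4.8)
The plan is to fix the geometry of $Y$ and then run Kawamata--Viehweg vanishing on the relevant smooth surface, the choice of surface depending on $b$. Since $Y$ is a non-scroll Roth variety, the Segre-locus dichotomy recalled in Subsection~\ref{subsec:birdiv} (and \cite{N1,N2}) gives $\dim\sC(Y)=1$, so $\sC(Y)$ is a line $\ell$; matching the effective linear divisor $L$ of $H_Y$-degree one furnished by the conical-scroll structure against the degree-one linear loci on a Roth variety forces $L=\ell$, hence $n-1=1$, i.e.\ $n=2$, together with the stated presentation $Y\in|bH_S+\pi_S^*\mathcal{O}_{\mathbf{P}^1}(1)|$ on $S=\mathbf{P}(\mathcal{O}_{\mathbf{P}^1}^{\oplus 2}\oplus\mathcal{O}_{\mathbf{P}^1}(a))$. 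I would then record the numerics on $Y$: with $H_Y,F_Y$ the hyperplane and fibre classes one has $H_Y^2=ab+1$, $H_Y\cdot F_Y=b$, $F_Y^2=0$, $K_Y=(b-3)H_Y+(a-1)F_Y$, and $\ell=H_Y-aF_Y$ with $\ell\cdot H_Y=1$; recall also $\mu=2$, $d=2d_Y+1=2ab+3$, and $e\le d_Y=ab+1$.

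For part $(1)$ with $b=2$, I would prove the vanishing $H^1(Y,\mathcal{O}_Y((d-e-2\mu+m)H_Y-(K_Y+nH_Y+L)))=0$ directly on $Y$. A short computation reduces the argument divisor to $N_m=(d-e-4+m-b)H_Y+F_Y$, so that $N_m-K_Y=cH_Y+(2-a)F_Y$ with $c=d-e-1+m-2b>0$. When $b=2$ the surface $Y$ is a conic bundle over $\mathbf{P}^1$, and it suffices to test $N_m-K_Y$ against the fibre components, $F_Y$, and $\ell$ (which carries the only horizontal extremal ray): on any fibre component $C$ one has $F_Y\cdot C=0$, whence $(N_m-K_Y)\cdot C=c\,(H_Y\cdot C)>0$, while $(N_m-K_Y)\cdot F_Y=cb>0$ and $(N_m-K_Y)\cdot\ell=ab+2-e+m>0$. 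Thus $N_m-K_Y$ is nef with positive self-intersection, hence nef and big, and Kawamata--Viehweg gives the vanishing.

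For part $(2)$ with $b\ge3$, the higher-genus fibres make the Mori cone of $Y$ difficult to control, so I would instead prove $H^1(\widetilde{X},\mathcal{O}_{\widetilde{X}}(\sigma^*D_{\inn}-E))=0$ on the blow-up surface $\widetilde{X}$. Using $K_{\widetilde{X}}=\tau^*(K_Y+H_Y+\ell)$, $\sigma^*D_{\inn}=(d-e-2)\widetilde{H}-\tau^*(K_Y+2H_Y+\ell)$ and $\tau^*H_Y=\widetilde{H}-E$, one obtains
\[
(\sigma^*D_{\inn}-E)-K_{\widetilde{X}}=(d-e-3)E+\tau^*\bigl((d-e-2b-1)H_Y+2F_Y\bigr).
\]
Here $d-e-2b-1=2b(a-1)+2-e>0$, so $(d-e-2b-1)H_Y+2F_Y$ is ample on $Y$ (ample plus nef); as $\tau\colon\widetilde{X}\to Y$ is finite of degree $\mu=2$, its pullback is ample on $\widetilde{X}$. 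Since $d-e-3>0$ and $E$ is effective, this makes the displayed class big. For nefness I note that the ample pullback is positive on every curve and $E\cdot C\ge0$ for $C\ne E$, while a direct computation using $E^2=-1$ and $F_Y\cdot\ell=b$ gives $\bigl((\sigma^*D_{\inn}-E)-K_{\widetilde{X}}\bigr)\cdot E=2>0$; hence the class is nef and big, Kawamata--Viehweg yields the vanishing, and $r_{\widetilde{X}}^0$ is surjective.

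The main obstacle is precisely the case $b\ge3$: then $Y$ is an irrational fibred surface whose singular fibres and possibly larger Picard number obstruct any clean description of its nef cone, so the $Y$-level vanishing cannot be obtained by the simple Mori-cone argument that works for $b=2$. The device that circumvents this is to push the problem onto $\widetilde{X}$ and exploit the finite degree-two map $\tau$: the pullback of an ample class along a finite morphism remains ample and hence positive on all curves, which collapses the entire nefness verification onto the single, $b$-independent intersection number $\bigl((\sigma^*D_{\inn}-E)-K_{\widetilde{X}}\bigr)\cdot E=2$.
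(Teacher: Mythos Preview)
Your identification $L=\ell$ is the crux of the whole argument, and it is false. Recall that $\widetilde{X}\in|2H_F+\tau_F^*L|$ on $F=\nP(\sO_Y\oplus\sO_Y(1))$; restricting to $\tau_F^{-1}(L)\cong\nP(\sO_{\nP^1}\oplus\sO_{\nP^1}(1))=\mathbb{F}_1$ gives an effective divisor of class $2H+(L^2)f$, and effectivity on $\mathbb{F}_1$ forces $L^2\ge-2$. On the other hand $\ell=H_Y-aF_Y$ satisfies $\ell^2=(ab+1)-2ab=1-ab\le-3$ for every $a,b\ge2$. Hence $L\neq\ell$ in all cases under consideration, so every formula you write from that point on (your $N_m$, your expression for $K_{\widetilde X}$ and for $\sigma^*D_{\inn}-E-K_{\widetilde X}$) is computed for the wrong divisor. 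In particular your intersection $(\,\cdot\,)\cdot E=2$, while numerically pleasant, is the intersection of the wrong class with $E$. Your deduction of $n=2$ also rests on $L=\ell$; the paper instead argues that for $n\ge3$ the fibres of $\pi_Y$ are degree-$b$ hypersurfaces in $\nP^n$, so by Lefschetz $\operatorname{Pic}(Y)=\nZ H_Y\oplus\nZ F_Y$ and no linear $\nP^{n-1}$ can lie on $Y$.

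What actually happens is that $L$ must be a line contained in a \emph{fibre} of $\pi_Y$. For $b\ge3$ the paper shows $L\cdot F_Y=0$ (via adjunction: $K_Y\cdot L\in[-2,0]$, while $L\cdot F_Y\ge1$ would give $K_Y\cdot L\ge(b-3)+(a-1)\ge1$), whence $L^2=-(b-1)$; combined with $L^2\ge-2$ this forces $b=3$ and $\tau^*L=2E$. The vanishing on $\widetilde X$ is then obtained by writing $\sigma^*D_{\inn}-E-K_{\widetilde X}=(2a-2)\,\tau^*(2H_Y-F_Y+L)+\bigl(2\widetilde H+(2a-e)\tau^*H_Y\bigr)$ and checking the first summand is nef on $Y$ (it meets $\ell$ and $L$ in $0$) while the second is ample. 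For $b=2$ the paper does not identify $L$ at all: it uses only that $H_Y-L$ and $H_Y-\ell$ are nef, together with the klt Fano pair structure $(Y,\tfrac{2a-3}{2a-1}\ell)$, to run Kawamata--Viehweg. Note also that your appeal to ``$\tau$ finite'' is unproven: $\widetilde X$ may well contain fibres of $\tau_F$ (these correspond to lines on $X$ through $x$), so pulling back ample classes along $\tau$ need not give ample classes.
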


\begin{proof}
The fibers of $\pi_Y$ are hypersurfaces of degree $b \geq 2$ in the fibers $\nP^n$ of $\pi_S$. If $n \geq 3$, then $\Pic(Y)$ is generated by $H_Y$ and $F_Y$. In this case, there is no linear subspace $L$ of dimension $n-1$ in $Y$. Thus $n=2$. Now, we only need to check the assertions (1) and (2). To this end, note that 
$$
K_Y = (K_S +Y)|_Y=(b-3)H_Y + (a-1)F_Y.
$$
The double point divisor from outer projection for $Y \subseteq \nP^{r-1}$ is
$$
-K_Y + (d_Y-4)H_Y = (a-1)(bH_Y - F_Y),
$$
which is nef. Since
$$
(bH_Y-F_Y)^2 = b^2(ba+1) - 2b^2 = b^2(ba-1) > 0,
$$
it follows that $bH_Y - F_Y$ is nef and big. 

\medskip

Suppose that $b=2$. Note that
$$
-K_Y = H_Y - (a-1)F_Y = \frac{1}{2a-1}(2H_Y - F_Y) + \frac{2a-3}{2a-1} \ell
$$
and $(Y, (2a-3)/(2a-1)\ell)$ is a klt Fano pair, i.e., it is a klt (Kawamata log terminal) pair and $-K_Y - (2a-3)/(2a-1)\ell$ is ample. Notice that $d_Y = ba+1 \geq a+3 \geq e+2$. As $H_Y - \ell$ and $H_Y - L$ are nef by Lemma \ref{lem:H-Lnef}, the divisor
$$
(d-e-2\mu+m)H_Y - (K_Y + nH_Y + L) = -K_Y + (d_Y - 4)H_Y + ((d_Y - e-1)H_Y - \ell) + (mH_Y - L)
$$
is nef for $m \geq 1$. Thus Kawamata--Viehweg vanishing yields (\ref{eq:H^1((d-e-2mu+m)H_Y - (K_Y + nH_Y + L))}). 

\medskip

Suppose that $b \geq 3$. Notice that 
$$
\ell^2 = (H_Y - aF_Y)^2 = (ba+1)-2ab = -ab+1 \leq -5.
$$
However, $\widetilde{X}|_{\tau_F^* L} = (2H_F +\tau_F^* L)|_{\tau_F^*L}$ is an effective divisor on $\tau_F^*L = \nP(\sO_{\nP^1} \oplus \sO_{\nP^1}(1))$, so $L^2 \geq -2$ (we will show that $L^2=-2$, which then implies that $\tau^*L= \widetilde{X}|_{\tau_F^*L} = 2E$). Thus $L \neq \ell$. As $-2 \leq L^2 \leq 0$ by Lemma \ref{lem:H-Lnef}, we have $-2 \leq K_Y.L \leq 0$. If $L.F_Y \geq 1$, then $K_Y.L \geq (b-3)+(a-1) \geq 1$, which is impossible. Thus $L.F_Y \leq 0$. This means that $L$ is contained in a fiber of $\pi_Y$. We may think that $L$ is a line and $F_Y - L$ is a (possibly reducible) plane curve of degree $b-1$ in $\nP^2$. We have
$$
0=L.F_Y = L.(L + (F_Y-L))=L^2 + L.(F_Y-L) = L^2 + (b-1),
$$
so $L^2=-b+1 \leq -2$. This implies that $L^2=-2$ and $b=3$. Now, applying Kodaira vanishing, we will show (\ref{eq:H^1(widetildeX)}). Note that $\widetilde{H} = \tau^*H_Y + E$ and $\tau^* L = 2E$. As $d=2(3a+1)+1=6a+3$ and $K_Y = (a-1)F_Y$, we may write
$$
\sigma^* D_{\inn} - E = (d-e-2)\widetilde{H} - \tau^* (K_Y + 2H_Y + L) - E = (6a-e-1)\tau^* H_Y + (6a-e-2)E - (a-1)\tau^* F_Y.
$$
Here $e \leq a+1$. Recall that $-K_{\widetilde{X}} = -\tau^*(K_Y + H_Y + L)$. Then
$$
\sigma^* D_{\inn} - E - K_{\widetilde{X}} = (6a-e-2)\tau^* H_Y + (6a-e-4)E - (2a-2)\tau^* F_Y.
$$
Notice that $2H_Y - F_Y + L = H_Y +(a-1)F_Y +\ell+ L$ is nef since
$$
(2H_Y - F_Y + L).\ell = 2-3+1=0~~\text{ and }~~(2H_Y - F_Y + L).L = 2-0-2=0.
$$
Thus the divisor
$$
\sigma^* D_{\inn} - E - K_{\widetilde{X}} = \underbrace{(2a-2)(2\tau^*H_Y - \tau^* F_Y + 2E)}_{\text{nef}} + \underbrace{(2a-e+2)\tau^* H_Y + (2a-e)E}_{=2\widetilde{H} + (2a-e)\tau^* H_Y~\text{is ample}}
$$
is ample, so (\ref{eq:H^1(widetildeX)}) holds by Kodaira vanishing. 
\end{proof}

We have completed the proof of Theorem \ref{thm:bpf}. 

\subsection{The case where the inner projection is singular}
As in Theorem \ref{thm:bpf}, suppose that $\sC(X)$ is a nonempty finite set and consider $\overline Y:=\overline{\pi_{x,X}(X \setminus \{x\})}$ for $x \in \sC(X)$. 
It would be interesting to explore what happens if $\overline Y$ is singular. We expect that this would lead us to classify all the cases where $D_{\inn}$ is not base point free.

\begin{question}
What can we say about the case when $\overline Y$ is singular? Can we classify all the cases where $D_{\inn}$ is not base point free?
\end{question}

We briefly explain how to construct an example where $\overline{Y}$ is singular.\footnote{We created and tested several examples using \texttt{Macaulay2} \cite{GS}, and in all of them, $D_{\inn}$ is base point free.}

\begin{example}\label{ex:singularY}
We use Noma's result \cite[Theorem 5]{N2} (see also Subsection~\ref{subsec:birdiv}) to construct an example where $\overline{Y}$ is singular. The rough idea is as follows. First, fix $\overline Y \subset \nP^{r-1}$ with a singularity. For computational simplicity, we take $\overline Y$ to be a hypersurface. Let $x \in \nP^r$ be a point, and $\pi_{x,\nP^r} \colon \nP^r \dashrightarrow \nP^{r-1}$ be a projection centered at $x$. In the cone $\overline F := \overline{\pi_{x,\nP^r}^{-1}(\overline Y)}$, we want to find a Weil divisor $X$, which is a smooth variety.

\medskip\par
To illustrate explicit equations, fix $r=4$ and $n = \dim X = 2$. Take a homogeneous coordinate $[Z_0,Z_1,Z_2,Z_3,Z_4]$ on $\nP^4$ such that $x = [0,0,0,0,1]$. Assume that $\overline Y $ is singular at $[Z_0,Z_1,Z_2,Z_3] = [0,0,0,1] \in \nP^3$. We find the ideal of $X$ based on the observation that $X$ is a Weil divisor on $\overline F$, which is Cartier except at $x$. Suppose $\bar Y$ is defined by an equation
\[
    g(Z_0,Z_1,Z_2,Z_3) = Z_0 g_0(Z_0,Z_1,Z_2,Z_3) + Z_1 g_1(Z_0,Z_1,Z_2,Z_3)
\]
with $\nabla g(0,0,0,1) = 0$. To fulfill $x \in X$, we take an equation which defines $X$ at general points:
\[
    f(Z_0,Z_1,Z_2,Z_3,Z_4) = \sum_{i=1}^\mu Z_4^{\mu-i} f_i(Z_0,Z_1,Z_2,Z_3).
\]
Here, $\mu \geq 3$ and $f_i \in \nC[Z_0,Z_1,Z_2,Z_3]$ is a homogeneous polynomial of degree $i$. Regarding $g$ as an element of $\nC[Z_0,Z_1,Z_2,Z_3,Z_4]$ we may consider the variety, say $X_I$ defined by the ideal $I := ( f,g )$. We are hoping to have a minimal prime ideal of $I$ which defines the desired $X$. To achieve it, we require one more assumption that $f_i \in ( Z_0, g_1)$ for each $i$. If $f_i = Z_0 A_i + g_1 B_i$ for some $A_i,B_i \in \nC[Z_0,Z_1,Z_2]$, then
\[
    f_i' := Z_1 A_i - g_0 B_i
\]
is the polynomial which corresponds to ``$\tfrac{Z_1}{Z_0} f_i$'', since $\frac{Z_1}{Z_0} + \frac{g_0}{g_1} = 0$ in $\overline F$, and this will belong to the desired minimal prime of $I$.

\medskip\par The following is one of the explicit example which gives us a smooth $X$ with $x \in \sC(X)$. Let $g_0(Z_0,Z_1,Z_2,Z_3) = 3  Z_2  Z_3$ and 
\[
    g_1(Z_0,Z_1,Z_2,Z_3) =   4 Z_0^2+9 Z_0 Z_1+4 Z_0 Z_2+8 Z_2^2+4 Z_0 Z_3+8 Z_1 Z_3
\]
and let $g = Z_0 g_0 + Z_1 g_1$. Then, $\bar Y = V(g)$ is the cubic surface with one ordinary double point at $[0,0,0,1]$. Let
\begin{align*}
f(Z_0,\dots,Z_4) ={}&
 Z_4^2  Z_1 + Z_4( 9   Z_0  Z_1 + 9 Z_1^2 +8  Z_1  Z_2 + 8 Z_1  Z_3 )  \\
 &{}  + 5  Z_0^2  Z_1 + 5  Z_0  Z_1^2 + 5  Z_1^3 + 8  Z_0  Z_1  Z_2+9  Z_1^2  Z_2+3  Z_1  Z_2^2 \\
 &{}+7  Z_1^2  Z_3 +5  Z_0  Z_2  Z_3 +3  Z_1  Z_2  Z_3+8  Z_2^2  Z_3+7  Z_1  Z_3^2+4  Z_2  Z_3^2
\end{align*}
The minimal primes over $(f,g)$ are $(Z_1,Z_2)$, $(Z_1,Z_3)$ and the ideal of $X$ with the desired properties.
\end{example}

\section{Bigness of $D_{\inn}$}\label{sec:big}
In this section, we investigate when $D_{\inn}$ is big, and prove Theorems \ref{thm:D_innbig} and \ref{thm:D_innbigdim=3} and Corollary \ref{cor:Iitakadim}. We assume that $n \geq 2$ and $X \subseteq \nP^r$ is not a Noma's exceptional variety.

\subsection{The case when $D_{\inn}$ is not big}
We first show that if $D_{\inn}$ is not big, then $D_{\inn}$ is base point free. The following lemma is probably well-known. 

\begin{lemma}\label{lem:baselocus}
Let $Z$ be a smooth projective variety of dimension $n$, and $D$ be a nonzero semiample divisor on $Z$. If $b:=\dim \Bs \lvert D \rvert \geq 0$, then $\kappa(D) \geq n-b$.
\end{lemma}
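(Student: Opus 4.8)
The plan is to realize $\Bs\lvert D\rvert$ as a union of fibers of the semiample fibration attached to $D$, and then count dimensions. Since $D$ is semiample, for a sufficiently divisible $m$ the linear system $\lvert mD\rvert$ is base point free and defines the associated fibration $\varphi\colon Z\to W$ onto a normal projective variety $W$, with connected fibers, where $\dim W=\kappa(D)$, together with an ample divisor $A$ on $W$ such that $mD=\varphi^*A$ (cf. \cite[Theorem 2.1.27]{positivity}). I would fix such $m$, $\varphi$, and $A$ once and for all.

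The key step is a restriction principle for sections along fibers. Let $F=\varphi^{-1}(w)$ be a fiber and $s\in H^0(Z,\sO_Z(D))$. Since $\varphi$ is constant on $F_{\mathrm{red}}$, the line bundle $\sO_Z(mD)|_{F_{\mathrm{red}}}=(\varphi^*A)|_{F_{\mathrm{red}}}$ is trivial, so $(s|_{F_{\mathrm{red}}})^{\otimes m}\in H^0(F_{\mathrm{red}},\sO_{F_{\mathrm{red}}})=\nC$, the last equality because $F_{\mathrm{red}}$ is a connected, reduced, projective scheme over $\nC$. Hence $(s|_{F_{\mathrm{red}}})^{m}$ is a constant: if it is nonzero then $s$ vanishes nowhere on $F$, and if it is zero then $s$ vanishes identically on $F$. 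Thus every fiber $F$ is either disjoint from $\operatorname{div}(s)$ or entirely contained in it. Intersecting over all $s\in H^0(Z,\sO_Z(D))$ shows that any fiber meeting $\Bs\lvert D\rvert$ is contained in $\Bs\lvert D\rvert$; that is, $\Bs\lvert D\rvert$ is a union of fibers of $\varphi$.

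The conclusion then follows by a dimension count. As $\varphi$ is surjective with $\dim Z=n$ and $\dim W=\kappa(D)$, every nonempty fiber has dimension at least $n-\kappa(D)$ by the fiber-dimension theorem. By hypothesis $\Bs\lvert D\rvert$ is nonempty, so it contains at least one whole fiber $F$, and therefore $b=\dim\Bs\lvert D\rvert\geq\dim F\geq n-\kappa(D)$, which rearranges to $\kappa(D)\geq n-b$.

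I expect the only delicate points to be formal rather than substantive. First, one must use that the semiample fibration genuinely has connected fibers, so that $H^0(F_{\mathrm{red}},\sO_{F_{\mathrm{red}}})=\nC$ and the constant above is well defined; this is exactly what the structure theorem for semiample divisors (via Stein factorization) provides. Second, one invokes the lower bound $\dim F\geq n-\kappa(D)$ on fiber dimension, valid for a dominant morphism of irreducible varieties. The hypotheses are also internally consistent: since $b\geq 0$ forces $\lvert D\rvert\neq\emptyset$ and $D\not\sim 0$, the degenerate possibility $\kappa(D)=0$ cannot occur, as an effective torsion divisor is linearly trivial; in any case the argument above is uniform in $\kappa(D)$.
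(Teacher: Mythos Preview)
Your proof is correct and follows essentially the same approach as the paper's: both use the semiample fibration $\varphi\colon Z\to W$ with $\dim W=\kappa(D)$, establish that every divisor in $\lvert D\rvert$ (hence $\Bs\lvert D\rvert$) is a union of fibers of $\varphi$, and then invoke the fiber-dimension bound $\dim F\geq n-\kappa(D)$. The paper packages this as a proof by contradiction (assuming $\kappa(D)\leq n-b-1$, picking $z\in\Bs\lvert D\rvert$ and $z'\in\varphi^{-1}(\varphi(z))\setminus\Bs\lvert D\rvert$, and deriving a contradiction from $\Supp(mD')=\varphi^{-1}(\Supp A)$), whereas you argue directly via the restriction $(s|_{F_{\mathrm{red}}})^{\otimes m}\in H^0(F_{\mathrm{red}},\sO_{F_{\mathrm{red}}})=\nC$; the content is the same.
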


\begin{proof}
Suppose that $\kappa(D) \leq n-b-1$.
Let $f \colon Z \rightarrow W$ be the morphism induced by the complete linear system $\lvert mD \rvert$ for a sufficiently large and divisible integer $m \gg 0$, where $W$ is a projective variety of dimension $\kappa(D)$. Choose a point $z \in \Bs \lvert D \rvert$. Notice that $\dim f^{-1}(f(z)) \geq n-\kappa(D) \geq b+1$. Thus we can take a point $z' \in f^{-1}(f(z)) \setminus \Bs \lvert D \rvert$. Now, there is an effective divisor $D' \in |D|$ such that $z' \not\in \Supp(D')$. On the other hand, $z \in \Supp(D')$. Note that $mD' = f^*A$ for some nonzero effective divisor $A$ on $W$. We have $f(z) \in \Supp(A)$. Then 
$$
w' \in f^{-1}(f(z)) \subseteq f^{-1}(\Supp(A)) = \Supp(mD') = \Supp(D'),
$$
which gives a contradiction. Hence $\kappa(D) \geq n-b$.
\end{proof}

\begin{proposition}\label{prop:baselocus}
If $D_{\inn}$ is not big, then $D_{\inn}$ is base point free. 
\end{proposition}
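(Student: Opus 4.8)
The plan is to read this off directly from the two tools already assembled in the excerpt: Lemma~\ref{lem:baselocus}, which says that for a nonzero semiample divisor the Iitaka dimension is bounded below by $n$ minus the dimension of its base locus, together with the fact that under our standing hypotheses $\Bs\lvert D_{\inn}\rvert$ is a finite set. The strategy is a short proof by contradiction: assuming $D_{\inn}$ is not base point free, I would produce a base point, deduce that the base locus is $0$-dimensional, and then invoke Lemma~\ref{lem:baselocus} to force $\kappa(D_{\inn}) = n$, i.e.\ bigness, contradicting the hypothesis.

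Concretely, I would first record the inputs. Because $X \subseteq \nP^r$ is not a Noma's exceptional variety, the divisor $D_{\inn}$ is semiample (Fujita--Zariski, as recalled after the inner-projection discussion), and $\Bs\lvert D_{\inn}\rvert \subseteq \sC(X)$. Since $X$ is in particular not a Roth variety, the Segre-locus bound gives $\dim \sC(X) \leq 0$, so $\sC(X)$, and hence $\Bs\lvert D_{\inn}\rvert$, is a finite set. I would then dispose of the trivial case $D_{\inn} = 0$ (the del Pezzo case), where base point freeness is immediate, so that afterwards I may apply Lemma~\ref{lem:baselocus} to the \emph{nonzero} semiample divisor $D_{\inn}$.

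Now suppose, for contradiction, that $D_{\inn}$ is not base point free. Then $\Bs\lvert D_{\inn}\rvert \neq \emptyset$, and since it is contained in the finite set $\sC(X)$ we have
\[
    b := \dim \Bs\lvert D_{\inn}\rvert = 0.
\]
Applying Lemma~\ref{lem:baselocus} with $Z = X$ and $D = D_{\inn}$ yields $\kappa(D_{\inn}) \geq n - b = n$, and since $\kappa(D_{\inn}) \leq n$ always holds, we get $\kappa(D_{\inn}) = n$, i.e.\ $D_{\inn}$ is big. This contradicts the assumption that $D_{\inn}$ is not big, and therefore $D_{\inn}$ must be base point free.

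I do not expect a genuine obstacle here: the only thing to be careful about is verifying the hypotheses of Lemma~\ref{lem:baselocus} before applying it, namely that $D_{\inn}$ is nonzero and semiample (handled by separating off the del Pezzo case and citing the semiampleness result) and that its base locus is $0$-dimensional (handled by the finiteness of $\sC(X)$ for non-Roth varieties). Once these are in place the argument is a one-line application of the lemma, and the proof is complete.
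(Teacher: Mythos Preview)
Your proposal is correct and is essentially identical to the paper's own proof: both argue by contraposition, observe that a non-base-point-free $D_{\inn}$ must be nonzero with $0$-dimensional base locus inside $\sC(X)$, and then apply Lemma~\ref{lem:baselocus} to conclude $\kappa(D_{\inn}) = n$. The only cosmetic difference is that you dispose of the case $D_{\inn} = 0$ up front, whereas the paper notes it as an immediate consequence of the assumption that $D_{\inn}$ is not base point free.
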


\begin{proof}
Suppose that $D_{\inn}$ is not base point free. In particular,  $D_{\inn} \neq 0$. We know that $\Bs\lvert D_{\inn} \rvert \subseteq \sC(X)$ and $\dim \sC(X) \leq 0$, and thus, $\dim \Bs\lvert D_{\inn} \rvert=0$. By Lemma \ref{lem:baselocus}, $\kappa(D_{\inn}) \geq n$, i.e., $D_{\inn}$ is big.
\end{proof}

\begin{remark} 
Proposition \ref{prop:baselocus} also follows from Noma's recent result \cite[Theorem 1.2 (2)]{N5} asserting that if $X \subseteq \nP^r$ is not a Roth variety and $\sC(X) \neq \emptyset$, then $D_{\inn}^n \geq \lvert \sC(X) \rvert$.
Indeed, if $D_{\inn}$ is not big, then $D_{\inn}^n = 0$ and thus $\sC(X) = \emptyset$. Since $\Bs\lvert D_{\inn} \rvert \subseteq \sC(X)$, it follows that $\lvert D_{\inn} \rvert$ is base point free.
\end{remark}

As for adjunction mappings, it would be an interesting problem to explore the structure of the morphism $\varphi \colon X \to Y$ given by $\lvert D_{\inn} \rvert$ when $D_{\inn}$ is not big. Here we exhibit three examples where $D_{\inn}$ is not big. We are not aware any other examples.

\begin{example}\label{ex:nonbig}
$(1)$  By \cite[Lemma 2.5]{KP2}, $D_{\inn}=0$ if and only if $X \subseteq \nP^r$ is a \emph{del Pezzo variety}, by which we mean a linearly normal smooth projective variety with $d=e+2$. It is well-known that if $X \subseteq \nP^r$ is a del Pezzo variety, then $-K_X = (n-1)H$.\\[3pt]
$(2)$ Let $S:=\nP(\sO_{\nP^1} \oplus \sO_{\nP^1}(1)^{\oplus n})$ for $n \geq 3$ with the tautological divisor $H_S$ and a fiber $F_S$. Take a general smooth member $X \in |2H_S|$, and put $H:=H_S|_X, F:=F_S|_X$.  Note that $\lvert H \rvert$ gives an embedding of $X$ into $\nP^{2n}$. Thus $X \subseteq \nP^{2n}$ is a non-degenerate smooth projective variety of dimension $n$, codimension $n$, and degree $d=H_S^{n}.2H_S=2n$. Clearly, $X \subseteq \nP^{2n}$ is not a Noma's exceptional variety. 
Note that
$$
K_S=-(n+1)H_S + (n-2)F_S~~\text{ and }~~K_X = (K_S+2H_S)|_X= -(n-1)H + (n-2)F.
$$
Thus
$$
D_{\inn}=-K_X - H = (n-2)(H-F).
$$
Since
$$
D_{\inn}^n = (n-2)^n(H_S-F_S)^n.2H_S = 2(n-2)^n(H_S^{n+1}-nH_S^{n}.F_S) = 0,
$$
it follows that $D_{\inn}$ is not big. The morphism induced by $\lvert D_{\inn} \rvert$ is a conic fibration over $\nP^{n-1}$.\\[3pt]
$(3)$ Let $S:=\nP(\sO_{\nP^1}(1)^{\oplus n+1})$ for $n \geq 4$ with the tautological divisor $H_S$ and a fiber $F_S$. Take a general smooth member $X \in |2H_S-2F_S|$, and put $H:=H_S|_X$ and $F:=F_S|_X$. Note that $|H|$ gives an embedding of $X$ into $\nP^{2n+1}$. Thus $X \subseteq \nP^{2n+1}$ is a non-degenerate smooth projective variety of dimension $n$, codimension $n+1$, and degree $d=H_S^n(2H_S-2F_S)=2n$. Clearly, $X \subseteq \nP^{2n+1}$ is not a Noma's exceptional variety.
Note that 
$$
K_S=-(n+1)H_S + (n-1)F_S~~\text{ and }~~K_X = (K_S+2H_S-2F_S)|_X = (-n+1)H + (n-3)F.
$$
Thus
$$
D_{\inn}=-K_X - 2H =(n-3)(H-F).
$$
Since
$$
D_{\inn}^n = (n-3)^n(H_S-F_S)^n(2H_S-2F_S)=2(n-3)^n(H_S^{n+1}-(n+1)H_S^nF_S) = 0,
$$
it follows that $D_{\inn}$ is not big. The morphism induced by $\lvert D_{\inn} \rvert$ is a linear fibration over $Q^{n-1}$. In fact, $X = \nP^1 \times Q^{n-1}$.
\end{example}

\subsection{Overall framework}
In the remaining, we show that $D_{\inn}$ is big in most cases. We assume that $X \subseteq \nP^r$ is neither a Noma's exceptional variety nor a del Pezzo variety (cf. Example \ref{ex:nonbig} $(1)$). In particular, $d \geq e+3$. Recall that if $X \subseteq \nP^r$ is not linearly normal, then $D_{\inn}$ is ample. As our aim is to show $D_{\inn}$ is big under some assumptions, we may henceforth assume that $X \subseteq \nP^r$ is linearly normal. Now, we introduce a technique developed in \cite{KP1, KP2} for proving $D_{\inn}$ is big. Let
$$
g:=\frac{1}{2}(K_X.H^{n-1} + (n-1)H^n) + 1
$$ 
be the sectional genus of $X \subseteq \nP^r$.

\begin{lemma}\label{lem:D_innbig}
If
\begin{equation}\label{eq:D_innbig}
\frac{(d-e-2)d + (\varepsilon+1)(e+1-\varepsilon)}{e} < \frac{(d-e-2)d+2n}{n},
\end{equation}
then $D_{\inn}$ is big.
\end{lemma}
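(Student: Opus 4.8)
The plan is to apply Siu's bigness criterion (cf. \cite[Theorem 2.2.15]{positivity}) recalled in the introduction: if $D$ and $E$ are nef divisors with $nD^{n-1}.E < D^n$, then $D-E$ is big. Following the technique of \cite{KP1, KP2}, I would set
\[
D := (d-e-2)H \qquad \text{and} \qquad E := K_X + (n-1)H,
\]
so that $D-E = -K_X + (d-n-e-1)H = D_{\inn}$. Since $X \subseteq \nP^r$ is neither a Noma's exceptional variety nor a del Pezzo variety, we have $d \geq e+3$, hence $d-e-2 \geq 1$ and $D$ is ample, in particular nef. It then remains to verify that $E$ is nef and that the numerical inequality $nD^{n-1}.E < D^n$ is implied by \eqref{eq:D_innbig}.

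For the intersection numbers I would use $H^n = d$ together with the definition of the sectional genus, namely $K_X.H^{n-1} + (n-1)H^n = 2g-2$, to obtain
\[
D^n = (d-e-2)^n d \qquad \text{and} \qquad D^{n-1}.E = (d-e-2)^{n-1}(2g-2).
\]
Dividing by $(d-e-2)^{n-1} > 0$, the condition $nD^{n-1}.E < D^n$ becomes $n(2g-2) < (d-e-2)d$, that is,
\[
2g < \frac{(d-e-2)d + 2n}{n},
\]
which is exactly the right-hand side of \eqref{eq:D_innbig}. Next, a general curve section $C = X \cap H_1 \cap \cdots \cap H_{n-1} \subseteq \nP^{e+1}$ is a non-degenerate smooth projective curve of degree $d$, codimension $e$, and genus $g$; indeed, iterated adjunction gives $K_C = (K_X+(n-1)H)|_C$, so $\deg K_C = 2g-2$. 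Applying Castelnuovo's genus bound \eqref{eq:Castelnuovo-big} to $C$ yields
\[
2g \leq \frac{(d-e-2)d + (\varepsilon+1)(e+1-\varepsilon)}{e},
\]
which is the left-hand side of \eqref{eq:D_innbig}. Chaining the two inequalities produces $2g < (d-e-2)d/n + 2$, hence $nD^{n-1}.E < D^n$, and Siu's criterion then gives the bigness of $D_{\inn}$.

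The one point requiring genuine care — and the main obstacle — is the nefness of $E = K_X + (n-1)H$, without which Siu's criterion does not apply. Here I would invoke the base point freeness result recalled in the introduction (cf. \cite[Theorem 1.4]{Io1}): the divisor $K_X + (n-1)H$ fails to be base point free only when $X = \nP^1$ or $(X, \sO_X(1))$ is one of $(Q^n, \sO(1))$, a scroll $(\nP(\sE), \sO(1))$ over a smooth projective curve, or the second Veronese surface $(\nP^2, \sO(2))$. Under the standing hypotheses each of these is excluded: $X = \nP^1$ and the quadric $Q^n$ are ruled out by $n \geq 2$ and $e \geq 2$ respectively, while the scroll over a curve and the second Veronese surface are precisely Noma's exceptional varieties. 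Consequently $K_X + (n-1)H$ is base point free, hence nef, and the argument carries through to completion.
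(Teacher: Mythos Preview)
Your proof is correct and follows essentially the same route as the paper: set $D=(d-e-2)H$ and $E=K_X+(n-1)H$, verify both are nef (using $d\ge e+3$ and \cite[Theorem~1.4]{Io1} respectively), rewrite Siu's inequality $nD^{n-1}.E<D^n$ as $2g<\bigl((d-e-2)d+2n\bigr)/n$, and then invoke Castelnuovo's bound \eqref{eq:Castelnuovo-big}. Your treatment of the nefness of $E$ is in fact more explicit than the paper's, which simply cites Ionescu under the standing assumptions without spelling out why each exceptional case is excluded.
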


\begin{proof}
As $d \geq e+3$, the divisor $D:=(d-e-2)H$ is very ample. By \cite[Theorem 1]{Io1}, $E:=K_X + (n-1)H$ is base point free under our assumption. By Siu's bigness criterion (cf. \cite[Theorem 2.2.15]{positivity}), if 
\begin{equation}\label{eq:DDE}
 n D^{n-1}.E < D^n,
\end{equation}
then $D-E = D_{\inn}$ is big . As $2g=K_X.H^{n-1}+(n-1)H^n$, we may rewrite (\ref{eq:DDE}) as
\begin{equation}\label{eq:DDE2}
2g < \frac{(d-e-2)d + 2n}{n}.
\end{equation}
Applying Castelnuovo's genus bound (\ref{eq:Castelnuovo-big}), we see that (\ref{eq:D_innbig}) implies (\ref{eq:DDE2}).
\end{proof}

\begin{lemma}\label{lem:D_innd<=r}
If $d \leq r$, then $D_{\inn}$ is big except when either $n \geq 3$, $d=2n$, $e=n$ and $\lvert D_{\inn} \rvert$ induces a conic fibration $X \to \nP^{n-1}$ (cf. Example \ref{ex:nonbig}), or $n \geq 4$, $d=2n$, $e=n+1$ and $\lvert D_{\inn} \rvert$ induces a linear fibration $X \to Q^{n-1}$ (cf. Example \ref{ex:nonbig}).
\end{lemma}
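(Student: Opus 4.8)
The plan is to reduce bigness to the numerical criterion of Lemma~\ref{lem:D_innbig} and then treat the degenerate locus $d=2n$ by geometry. As reduced earlier in this section, we may assume $X\subseteq\nP^r$ is linearly normal and not a del Pezzo variety, so $d\geq e+3$; combined with $d\leq r=n+e$ this gives $3\leq d-e\leq n$, and in particular $n\geq 3$. I would first dispose of the low-codimension range: if $e\leq n-2$, then $-K_X$ is numerically proportional to $H$ by the Barth--Larsen theorem (cf.\ the introduction), hence so is $D_{\inn}$; being nef and nonzero (as $X$ is not del Pezzo, by Example~\ref{ex:nonbig}~$(1)$) it is a positive multiple of $H$, hence big. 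This leaves the range $e\geq n-1$.

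Next I would run the Castelnuovo computation. By Lemma~\ref{lem:D_innbig} it suffices to verify \eqref{eq:DDE2}, namely $2g<((d-e-2)d+2n)/n$. The key observation is that $e\geq n$ forces $d\leq n+e\leq 2e$, so $m=\lfloor(d-1)/e\rfloor=1$ throughout (and for $e=n-1$ this holds as well, except for the single edge value $d=2n-1$, which needs a two-line separate check). When $m=1$ the bound \eqref{eq:Castelnuovo-big} collapses to $2g\leq 2(d-e-1)$, and a direct simplification shows that in this case the criterion \eqref{eq:D_innbig} is \emph{equivalent} to $d>2n$. Consequently, for $e\geq n+1$ and $2n<d\leq n+e$ bigness follows immediately. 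The remaining, genuinely hard, regime is $e\geq n-1$ together with $d\leq 2n$, where the crude bound yields only a non-strict inequality.

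To handle $d\leq 2n$ I would argue with the \emph{actual} sectional genus rather than its Castelnuovo upper bound. If $2g<((d-e-2)d+2n)/n$ holds, Lemma~\ref{lem:D_innbig} gives bigness directly, so assume it fails. Then the $m=1$ estimate pins $g$ to the top of a narrow window, and at $d=2n$ it forces the Castelnuovo equality $g=d-e-1$ (one checks $(d-e-2)d/n+2=2(d-e-1)$ exactly when $d=2n$). In this (near-)extremal situation the general curve section $C\subseteq\nP^{e+1}$ has maximal genus, and invoking the classification of low-degree, extremal-genus polarized varieties, $X$ is forced to be a scroll-type fibration over $\nP^{n-1}$ or $Q^{n-1}$. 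I would then compute $D_{\inn}^{n}$ on the resulting projective bundle directly, exactly as in Example~\ref{ex:nonbig}~$(2)$--$(3)$: this yields $D_{\inn}^{n}=0$ precisely for the conic fibration over $\nP^{n-1}$ ($e=n$, $d=2n$) and the linear fibration over $Q^{n-1}$ ($e=n+1$, $d=2n$), and a strictly positive value in every other case, giving bigness off the two exceptions.

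The main obstacle is this last step. The numerical criterion degenerates to an equality exactly along $d=2n$, so the inequality method is powerless there and one must instead extract geometry: recognize the scroll/quadric-bundle structure forced by (near-)extremal sectional genus and carry out the intersection-theoretic computation of $D_{\inn}^{n}$ on the relevant $\nP^1$-bundle. Establishing that \emph{every} member of the $e\geq n-1$, $d\leq 2n$ family with $D_{\inn}$ not big is one of the two listed fibrations is where the real work lies; I note that by Proposition~\ref{prop:baselocus} any such non-big $D_{\inn}$ is at least automatically base point free, which is consistent with the expected fibration description and provides a useful sanity check.
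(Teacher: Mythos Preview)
Your proposal takes a fundamentally different route from the paper, and it has a real gap. The paper's proof is a single sentence: it invokes Ionescu's classification \cite{Io3} of smooth projective varieties $X\subseteq\nP^r$ with $d\leq r$ and checks case by case. That reference gives a finite, explicit list, so verifying bigness (or identifying the two exceptional fibrations) is a routine inspection.

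Your approach, by contrast, tries to rebuild this classification from Castelnuovo-type inequalities. The first two reductions (Barth--Larsen for $e\leq n-2$; the $m=1$ collapse of \eqref{eq:Castelnuovo-big} for $e\geq n-1$) are correct and cleanly isolate the boundary $d=2n$. But your own final paragraph concedes the decisive step is missing: from (near-)Castelnuovo-extremal sectional genus alone you have not established that $X$ must be the specific conic bundle over $\nP^{n-1}$ or linear bundle over $Q^{n-1}$. The passage ``invoking the classification of low-degree, extremal-genus polarized varieties, $X$ is forced to be a scroll-type fibration'' is exactly where a hard theorem is being assumed without citation, and the standard Castelnuovo structure results (e.g.\ \cite[Theorem~2.5]{ACGH}) concern curves on rational normal scrolls, not the higher-dimensional statement you need. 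In effect you are re-deriving part of \cite{Io3} by hand, and the part you have not written down is precisely the content that makes the lemma true.

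If you want a self-contained argument avoiding \cite{Io3}, you would need to actually carry out the structure theorem at $d=2n$ (or more generally $e\geq n-1$, $d\leq 2n$): show the general curve section lies on a surface of minimal degree, lift this to a containment of $X$ in a rational normal scroll or a cone, and then pin down the divisor class of $X$ there. That is doable but substantially longer than citing Ionescu, which is why the paper does the latter.
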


\begin{proof}
By Ionescu's classification \cite{Io3} of smooth projective varieties $X \subseteq \nP^r$ of degree $d \leq r$, one can easily check the assertion.
\end{proof}

\begin{proposition}\label{prop:D_innbig}
Suppose that one of the following hold:
\begin{enumerate}
 \item[$(1)$] $e \geq n+1$ and either $\varepsilon \geq n-1$ or $\varepsilon \leq e-n+1$, or
 \item[$(2)$] $e=n$ and $\varepsilon = 0$.
\end{enumerate}
Then \emph{(\ref{eq:D_innbig})} holds, i.e., $D_{\inn}$ is big, unless $n \geq 4, d=2n, e=n+1$ and $\lvert D_{\inn} \rvert$ induces a linear fibration $X \to Q^{n-1}$ (cf. Example \ref{ex:nonbig}).
\end{proposition}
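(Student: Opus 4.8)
The plan is to turn \eqref{eq:D_innbig} into a polynomial inequality that I can analyze by the size of the Castelnuovo parameter $m$. Writing $A:=(d-e-2)d$ and clearing denominators, \eqref{eq:D_innbig} is equivalent to $(e-n)A > n\big((\varepsilon+1)(e+1-\varepsilon)-2e\big)$; expanding the cubic shows
\[
(\varepsilon+1)(e+1-\varepsilon)-2e=(\varepsilon-1)(e-1-\varepsilon),
\]
so \eqref{eq:D_innbig} reads $(e-n)(d-e-2)d>n(\varepsilon-1)(e-1-\varepsilon)$; call this $(\star)$. Using $d=me+\varepsilon+1$, so that $d-e-2=(m-1)e+\varepsilon-1$ and $e-1-\varepsilon=(m+1)e-d$, I would check the identity
\[
(e-n)(d-e-2)d-n(\varepsilon-1)(e-1-\varepsilon)=e\big[(e-n)(m-1)d+(\varepsilon-1)(d-n(m+1))\big],
\]
which reduces $(\star)$ to the cleaner inequality $(e-n)(m-1)d+(\varepsilon-1)(d-n(m+1))>0$; call this $(\star\star)$.

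Hypothesis $(2)$ is immediate from $(\star)$: when $e=n$ and $\varepsilon=0$ the left side vanishes and the right side is $-n(n-1)<0$. For hypothesis $(1)$ I set $k:=e-n\ge1$ and split on $m$. When $m\ge2$ the first term of $(\star\star)$ is strictly positive, so the only obstruction is a negative second term, and the two ranges of $\varepsilon$ in $(1)$ are designed to prevent this. If $\varepsilon\ge n-1$, then $d-n(m+1)=mk+\varepsilon+1-n\ge mk>0$, so both terms are nonnegative and the first is positive. If $\varepsilon\le e-n+1$, then $\varepsilon-1\le k$ is bounded while $d\ge me$ grows with $m$; a short estimate bounds the (possibly negative) second term by $k(n-2)$ in absolute value, which is dominated by the first term $k(m-1)d\ge 2k(n+1)$. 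The slice $\varepsilon=0$ is checked directly, since then $(\star\star)$ equals $d(k(m-1)-1)+n(m+1)>0$. This disposes of all $m\ge2$.

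The delicate case is $m=1$, where $(\star\star)$ collapses to $(\varepsilon-1)(d-2n)>0$. Because $X$ is not a del Pezzo variety we have $d\ge e+3$, which for $m=1$ forces $\varepsilon\ge2$; hence $(\star\star)$ is equivalent to $d>2n$. In the range $\varepsilon\ge n-1$ one gets $d=e+\varepsilon+1\ge2n+1$ automatically, so $(\star)$ holds. In the range $\varepsilon\le e-n+1$, either $d>2n$ and we are done, or $d\le2n$, in which case $\varepsilon\le2n-e-1\le n-1$ and therefore $d=e+\varepsilon+1\le n+e=r$; here I would invoke Lemma~\ref{lem:D_innd<=r}. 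Under $e\ge n+1$ its conic-fibration exception ($e=n$) is excluded, and only the linear-fibration exception $d=2n,\ e=n+1$ can survive; this forces $\varepsilon=n-2\le e-n+1=2$, hence $n=4$, matching the stated exceptional locus and realized by Example~\ref{ex:nonbig}. (For $n\ge5$ the pair $e=n+1,\ d=2n$ violates hypothesis $(1)$, so it is harmless.)

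The step I expect to be the main obstacle is the estimate for $m\ge2$ in the range $\varepsilon\le e-n+1$: there $d-n(m+1)$ genuinely changes sign, and one cannot afford the crude bound $(\varepsilon-1)(e-1-\varepsilon)\le\big((e-2)/2\big)^2$, which is far too large once $e\ge 2n-1$. The point is instead to use the hypothesis $\varepsilon\le e-n+1$, i.e. $\varepsilon-1\le e-n$, to keep the negative contribution small relative to the growing first term of $(\star\star)$. The secondary subtlety is the boundary value $d=2n$, where $(\star)$ becomes an equality and \eqref{eq:D_innbig} fails; there I must pass to Lemma~\ref{lem:D_innd<=r} and verify that the surviving case is exactly the linear fibration $X\to Q^{n-1}$ of Example~\ref{ex:nonbig} rather than a genuinely big divisor.
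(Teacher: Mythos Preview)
Your proof is correct and follows the same overall strategy as the paper: both rewrite \eqref{eq:D_innbig} as the inequality $(e-n)(d-e-2)d > n(\varepsilon-1)(e-1-\varepsilon)$, dispatch case~(2) immediately, and for case~(1) with $m=1$ and small $\varepsilon$ fall back on Lemma~\ref{lem:D_innd<=r}. The only substantive difference is in the $m\ge 2$ computation: rather than passing to your auxiliary form $(\star\star)$ and estimating, the paper matches factors directly in $(\star)$---using $n<d$, $\varepsilon-1\le d-e-2$, $e-\varepsilon-1\le e-n$ when $\varepsilon\ge n-1$, and $n<d$, $\varepsilon-1\le e-n$, $e-\varepsilon-1\le d-e-2$ when $\varepsilon\le e-n+1$---which gives the inequality in one line without the sign analysis on $d-n(m+1)$.
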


\begin{proof}
We may rewrite (\ref{eq:D_innbig}) in  Lemma \ref{lem:D_innbig} as
\begin{equation}\label{eq:DDE3}
n(\varepsilon-1)(e-\varepsilon-1) < (e-n)(d-e-2)d.
\end{equation}
If the condition $(2)$ holds, then (\ref{eq:DDE3}) holds. Now, suppose that the condition $(1)$ holds.
First, assume that $\varepsilon \geq n-1$. Then $e-\varepsilon-1 \leq e-n$. In this case, $\epsilon -1 \leq d-e-2$ and $n \leq e <  d$ so that  (\ref{eq:DDE3}) holds.
Next, assume that $\varepsilon \leq e-n+1$. Then $\varepsilon-1 \leq e-n$. If $m \geq 2$, then $e-\varepsilon-1 \leq d-e-2$ and $n \leq e < d$ so that  (\ref{eq:DDE3}) holds. Suppose that $m=1$. Then $d=e+\varepsilon+1$. We may assume that $\varepsilon \leq n-2$ (if not, (\ref{eq:DDE3}) holds). Then $d \leq r-1$, so the assertion follows from Lemma \ref{lem:D_innd<=r}.
\end{proof}

\subsection{Proofs of Theorems \ref{thm:D_innbig} and \ref{thm:D_innbigdim=3} and Corollary \ref{cor:Iitakadim}}

\begin{proof}[Proof of Theorem \ref{thm:D_innbig}]
By Lemma \ref{lem:D_innd<=r}, we may assume that $d \geq r+1$. We need to show that if 
\begin{equation}\label{eq:d>=(2n-e-2)/(2e-2n)e+1}
d \geq \left\lceil \frac{2n-e-2}{2e-2n} \right\rceil e+1,
\end{equation}
then $D_{\inn}$ is big. We rewrite (\ref{eq:D_innbig}) in  Lemma \ref{lem:D_innbig} as
\begin{equation}\label{eq:DDE4}
q(\varepsilon):=\varepsilon^2 + \big(2m(e-n)-e\big)\varepsilon + m(m-1)(e-n)e+2n-e-1 > 0.
\end{equation}
The assumption (\ref{eq:d>=(2n-e-2)/(2e-2n)e+1}) means that
$$
m \geq \frac{2n-e-2}{2e-2n},
$$
which is equivalent to
\begin{equation}\label{eq:e-n+1}
e-n+1 \geq -\frac{1}{2} \big(2m(e-n)-e\big).
\end{equation}
Proposition \ref{prop:D_innbig} says (\ref{eq:D_innbig}) holds when $\varepsilon = e-n+1$. As (\ref{eq:D_innbig}) is equivalent to (\ref{eq:DDE4}), we have
\begin{equation}\label{eq:q(e-n+1)}
q(e-n+1) > 0.
\end{equation}
Notice that the quadratic function $q(\varepsilon)$ in $\varepsilon$ is increasing when 
$\varepsilon \geq -1/2 \big(2m(e-n)-e\big)$.
Then (\ref{eq:e-n+1}) and (\ref{eq:q(e-n+1)}) imply that $q(\varepsilon)>0$ for $\varepsilon \geq e-n+1$. On the other hand, by Proposition \ref{prop:D_innbig}, we have $q(\varepsilon)>0$ for $\varepsilon \leq e-n+1$. Hence (\ref{eq:DDE4}) holds for all $\varepsilon$.
\end{proof}

\begin{proof}[Proof of Theorem \ref{thm:D_innbigdim=3}]
$(1)$ Assume $n=2$. By Proposition \ref{prop:D_innbig}, we only have to consider the case of $e=2$ and $\varepsilon = 1$. 
By Castelnuovo's genus bound (\ref{eq:Castelnuovo-big}), we have 
$$
2g \leq \frac{1}{2} d^2- 2d + 2.
$$
If the inequality is strict, then (\ref{eq:DDE2}) holds so that $D_{\inn}$ is big. Assume that the equality holds, i.e., $2g=(1/2)d^2-2d+2$. Then a general curve section $C \subseteq \nP^3$ of $S \subseteq \nP^4$ is projectively normal and contained in a quadric surface. Since $d=2m+2$ is even, it follows that $C \subseteq \nP^3$ is a complete intersection (see e.g., \cite[page 119]{ACGH}). Thus $S \subseteq \nP^r$ is also a complete intersection. In this case, since $S \subseteq \nP^r$ is not a linearly normal del Pezzo surface, it follows that $m \geq 2$, i.e., $d \geq 6$. Then $D_{\inn}$ is very ample. 

\medskip

\noindent $(2)$ Assume $n=3$. If $e \geq 4$ or $e=3$ and $\varepsilon=0$, then $D_{\inn}$ is big by Proposition \ref{prop:D_innbig}. Suppose $e=3$ and $\varepsilon = 1$ or $2$. Then (\ref{eq:DDE3}) is the equality, so (\ref{eq:D_innbig}) is the equality. If (\ref{eq:Castelnuovo-big}) is the strict inequality, then (\ref{eq:DDE}) holds so that $D_{\inn}$ is big by Siu's bigness criterion. Thus we assume that (\ref{eq:Castelnuovo-big}) is also the equality, i.e., $6g=d^2-5d+6=(d-2)(d-3)$. If $d \leq 8$, then $(g,d)=(2,6)$ or $(5,8)$. By Ionescu's classification \cite{Io1, Io2} of smooth projective varieties $X \subseteq \nP^r$ of degree $d \leq 8$, we can easily check the assertion for $d \leq 8$. Assume that $d \geq 9$. Since a general curve section $C \subseteq \nP^4$ of $X \subseteq \nP^6$ is a Castelnuovo curve, it follows that $X$ is a smooth divisor on a rational normal scroll (see \cite[Theorem 2.5]{ACGH}). As $\dim \sC(X) \leq 0$ (actually, we may assume $\sC(X) = \emptyset$ by Proposition \ref{prop:baselocus}, but we don't need this), \cite[Proposition 5.2]{N2} shows that $X \in |aH_S|$ for some integer $a \geq 3$, where $S:=\nP (\sO_{\nP^1} \oplus \sO_{\nP^1}(1)^{\oplus 3})$ with the tautological divisor $H_S$. Here $d=3a$ and $K_X = (a-4)H + F$, where $F$ is the restriction of a fiber of $S$ over $\nP^1$ to $X$. Then 
$$
D_{\inn} = -K_X + (3a-7)H = (2a-3)H-F,
$$
so
$$
D_{\inn}^3 = (2a-3)^3 3a - 3(2a-3)^2a = 3(2a-3)^2a(2a-4) > 0.
$$
Hence $D_{\inn}$ is big.

\medskip
Finally, consider the case of $e=2$. Notice that $X \subseteq \nP^5$ is linearly normal and $H^1(X, \sO_X)=0$. If $X$ is contained in a quadric hyperusrface in $\nP^5$, then $X \subseteq \nP^5$ is a complete intersection or a Roth variety by \cite[Corollary 2.7]{Kwak} and \cite[Proposition 5.2]{N2}. In the former case, $D_{\inn}$ is very ample. We assume that $X$ is not contained in a quadric hypersurface in $\nP^5$. If $C \subseteq \nP^3$ is a general curve section of $X \subseteq \nP^5$, then $C$ is not contained in a quadric hypersurface in $\nP^3$ as $X \subseteq \nP^5$ is linearly normal. By Halphen's genus bound (see e.g., \cite[1.6]{Ottaviani}), we have
\begin{equation}\label{eq:Halphen}
\frac{d^2-3d+6}{6} \geq g,
\end{equation}
where $g$ is the genus of $C$. Note that $\lvert K_X + 2H \rvert$ is base point free (see \cite[Theorem 1.4]{Io1}). If the adjunction mapping of $X$ gives a hyperquadric fibration over a curve, then $X$ is a smooth divisor of a rational normal scroll. In this case, $X$ is contained in a quadric hypersurface in $\nP^5$. If the adjunction mapping of $X$ gives a linear fibration over a surface, then one can easily check that $D_{\inn}$ is big using Ottaviani's classification \cite{Ottaviani} (see especially Table 1 in page 460). Thus we may assume that $K_X + 2H$ is big by adjunction theory (cf. \cite[Proposition 1.11]{Io1}). If $d \leq 7$, then Ionescu's classification \cite{Io1, Io2} shows that $D_{\inn}$ is big. Thus we may assume that $d \geq 8$. By \cite[Theorem 12.3.1]{BS}, $K_X+2H$ is ample. Then \cite[Theorem 7.3.4]{BS} implies that $E:=K_X + H$ is nef. Note that $D:=(d-5)H$ is very ample. By Siu's bigness criterion, $D_{\inn}=D-E$ is big if $3 D^2.E < D^3$, which is equivalent to $3(2g-2-d) < (d-5)d$. But this follows from (\ref{eq:Halphen}).
\end{proof}

\begin{corollary}\label{cor:e>=(4n-2)/3=>D_innbig}
If $e \geq (4n-2)/3$, then $D_{\inn}$ is big except when $n=4,5$, $e=n+1$, $d=2n$ and $\lvert D_{\inn} \rvert$ induces a linear fibration $X \to Q^{n-1}$ (cf. Example \ref{ex:nonbig}).
\end{corollary}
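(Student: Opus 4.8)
The plan is to derive the corollary directly from Theorems~\ref{thm:D_innbig} and~\ref{thm:D_innbigdim=3}, the point being that the hypothesis $e \geq (4n-2)/3$ makes the numerical degree condition of Theorem~\ref{thm:D_innbig} automatic. I retain the standing assumption of this section that $X \subseteq \nP^r$ is neither a Noma's exceptional variety nor a del Pezzo variety, so in particular $d \geq e+1$ (indeed $d \geq e+3$). I would argue by dimension, handling $n \in \{2,3\}$ with Theorem~\ref{thm:D_innbigdim=3} and $n \geq 4$ with Theorem~\ref{thm:D_innbig}.

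For $n = 2$ the hypothesis reads $e \geq 2$, and Theorem~\ref{thm:D_innbigdim=3} gives that $D_{\inn}$ is big for every surface, its only listed exception being three-dimensional; so no case needs to be excluded. For $n = 3$ the bound $e \geq 10/3$ and integrality force $e \geq 4$, whereas the unique exception of Theorem~\ref{thm:D_innbigdim=3} requires $e = 3$; hence it is ruled out and $D_{\inn}$ is big. This settles the low-dimensional cases with no exception, matching the corollary, which records exceptions only for $n = 4, 5$.

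Now assume $n \geq 4$. First I would check that Theorem~\ref{thm:D_innbig} applies: since $(4n-2)/3 > n$ for all $n \geq 3$ and $e$ is an integer with $e \geq (4n-2)/3$, we get $e \geq n+1$, so in particular $2e - 2n > 0$. The inequality
$$
\frac{2n-e-2}{2e-2n} \leq 1
$$
is then equivalent to $4n-2 \leq 3e$, which is exactly the hypothesis; therefore $\left\lceil (2n-e-2)/(2e-2n)\right\rceil \leq 1$ and the right-hand side of the degree condition is at most $e+1 \leq d$. Thus Theorem~\ref{thm:D_innbig} yields that $D_{\inn}$ is big except possibly when $e = n+1$, $d = 2n$, and $\lvert D_{\inn}\rvert$ induces a linear fibration over $Q^{n-1}$.

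It remains to decide for which $n \geq 4$ this exception is genuinely possible, and the only delicate point is this arithmetic bookkeeping. The exception forces $e = n+1$, which combined with $e \geq (4n-2)/3$ gives $n+1 \geq (4n-2)/3$, that is $n \leq 5$; so the exception can occur only for $n \in \{4,5\}$, and in each of these two dimensions it is realized by Example~\ref{ex:nonbig}~$(3)$ (which is built precisely for $n \geq 4$), namely $X = \nP^1 \times Q^{n-1}$ with $e = n+1$, $d = 2n$. For $n \geq 6$ one has $e \geq (4n-2)/3 > n+1$, so $e = n+1$ is impossible and $D_{\inn}$ is unconditionally big. Combining the three ranges of $n$ produces exactly the stated exceptional list. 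The main obstacle, modest as it is, is not geometric but combinatorial: one must simultaneously reconcile the integrality of $e$, the threshold $e = n+1$ at which Theorem~\ref{thm:D_innbig} still permits an exception, and the constraint $n \geq 4$ inherent in Example~\ref{ex:nonbig}~$(3)$, so as to certify that the exceptional set is neither enlarged nor missed.
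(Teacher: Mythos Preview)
Your proof is correct and follows essentially the same approach as the paper: handle $n\le 3$ via Theorem~\ref{thm:D_innbigdim=3} and $n\ge 4$ via Theorem~\ref{thm:D_innbig}, after observing that $e\ge(4n-2)/3$ forces $e\ge n+1$ and renders the degree condition $d\ge e+1$ automatic. Your treatment is in fact slightly more explicit than the paper's in noting that for $n=3$ the hypothesis gives $e\ge 4$, thereby excluding the $e=3$ exception of Theorem~\ref{thm:D_innbigdim=3}.
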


\begin{proof}
When $n \leq 3$, the assertions follow from Theorem \ref{thm:D_innbigdim=3}. We henceforth assume that $n \geq 4$. Note that $e \geq (4n-2)/3$ implies $e \geq n+1$ when $n \geq 4$ and $e=n+1$ if and only if $n=4,5$. Furthermore, $e \geq (4n-2)/3$ is equivalent to $(2n-e-2)/(2e-2n) \leq 1$. Then the corollary is a consequence of Theorem \ref{thm:D_innbig}.
\end{proof}

\begin{proof}[Proof of Corollary \ref{cor:Iitakadim}]
If $e \leq n-2$, then Barth--Larsen theorem implies that $D_{\inn}$ is very ample. Thus we assume that $e \geq n-1$. By Lemma \ref{lem:D_innd<=r}, the assertion holds when $d \leq r$. Thus we also assume that $d \geq r+1$. Now, since $D_{\inn}$ is semiample, it follows that $\kappa(D_{\inn})=\nu(D_{\inn})$, where $\nu(D_{\inn})$ is the numerical Iitaka dimenson. It suffices to show that $D_{\inn}^{n-k} \not\equiv 0$ for $n-k \leq (3n-1)/4$. If $Y \subseteq \nP^{r-k}$ is a general $(n-k)$-dimensional section of $X \subseteq \nP^r$, then Corollary \ref{cor:e>=(4n-2)/3=>D_innbig} implies that $D_{\inn}|_Y$ is big as soon as $n-1 \geq (4(n-k)-2)/3$, which is equivalent to $(3n-1)/4 \geq n-k$. Thus we can conclude that $D_{\inn}^{n-k} \not\equiv 0$ for $(3n-1)/4 \geq n-k$. 
\end{proof}

This paper ends with the following question.

\begin{question}
Can we classify all the cases where $D_{\inn}$ is not big? In each case, what can we say about the structure of the morphism given by $\lvert D_{\inn} \rvert$?
\end{question}


\end{document}